\documentclass[11pt, reqno, a4paper]{amsart}
\usepackage{orcidlink}
\usepackage[margin=1.3in]{geometry}
\usepackage{setspace, enumitem, xcolor}
\usepackage{hyperref}
\usepackage{float}
\usepackage{hyperref}
\usepackage[nameinlink]{cleveref}

\usepackage{amsfonts,amsmath,amssymb,amsthm, mathrsfs, todonotes}
\usepackage{mathtools}
\usepackage{parskip, enumitem} 
\newtheorem{theorem}{Theorem}[section]
\newtheorem{lemma}[theorem]{Lemma}
\newtheorem{proposition}[theorem]{Proposition}
\newtheorem{question}[theorem]{Question}

\theoremstyle{definition}
\newtheorem{definition}[theorem]{Definition}
\newtheorem{example}[theorem]{Example}

\newtheorem{corollary}[theorem]{Corollary}
\newtheorem*{definition*}{Definition}

\newtheorem{remark}[theorem]{Remark}

\crefname{question}{question}{questions}
\Crefname{question}{Question}{Questions}
\crefname{mainthm}{theorem}{theorems}
\Crefname{mainthm}{Theorem}{Theorems}

\newcommand{\calB}{{\mathcal{B}}}

\newcommand{\calF}{{\mathcal{F}}}

\newcommand{\calH}{{\mathcal{H}}}

\newcommand{\calM}{{\mathcal{M}}}

\newcommand{\calT}{{\mathcal{T}}}

\newcommand{\calV}{{\mathcal{V}}}

\newcommand{\scrD}{{\mathscr D}}

\newcommand{\scrG}{{\mathscr G}}

\newcommand{\scrT}{{\mathscr T}}

\newcommand{\ip}[1]{\left\langle #1 \right\rangle}
\newcommand{\norm}[1]{\left\lVert #1 \right\rVert}
\newcommand{\abs}[1]{\left\lvert #1 \right\rvert}

\renewcommand{\le}{\leqslant}
\renewcommand{\ge}{\geqslant}
\renewcommand{\leq}{\leqslant}
\renewcommand{\geq}{\geqslant}

\newcommand{\R}{\mathbb R}
\newcommand{\N}{\mathbb N}
\newcommand{\Z}{\mathbb Z}

\newcommand{\C}{\mathbb C}

\newcommand{\Chi}{\mathsf{Chi}}
\newcommand{\pr}{\mathsf{par}}

\numberwithin{equation}{section}

\usepackage{thmtools}
\usepackage{thm-restate}

\begin{document}

\title[Wold-type decompositions for weighted shifts on directed trees]{Wold-type decompositions for norm-increasing $m$-concave weighted shifts on directed trees}
\author[Ashish Kujur]{Ashish Kujur \orcidlink{0009-0001-5952-7291}}
\address{School of Mathematics, Indian Institute of Science Education and Research, Thiruvananthapuram}
\email{ashishkujur23@iisertvm.ac.in}
\thanks{The author is supported through the Senior Research Fellowship of the Council of Scientific and Industrial Research (CSIR), India (Ref. No. 09/0997(18166)/2024-EMR-I)}

\begin{abstract}
    In this article, we show that every norm-increasing $3$-concave weighted
    shift on a directed tree admits Wold-type decomposition. We then classify the $4$-isometries within a class of weighted shifts on a rootless directed tree introduced by S. Chavan and S. Trivedi, and use this classification to construct analytic norm-increasing strict $4$-isometries without the wandering subspace property. This answers a question of S. Shimorin in the negative for every $m\in \mathbb{Z}_{\ge 4}$.
\end{abstract}

\maketitle

\section{Introduction}
    For $x \in \R$, we define $\mathbb{Z}_{\ge x} := \{ n \in \Z : n \ge x \}$. Let $\calH$ be a separable Hilbert space, and we denote by $\calB (\calH)$ the $C^{*}$-algebra of all bounded linear operators on $\calH$. Let $\calM$ be a subspace of $\calH$ and $T \in \calB (\calH)$. We define
    \begin{equation*}
        [\calM]_{T} := \bigvee \left\{ T^{n} x : x \in \calM, \, n \in \Z_{\ge 0} \right\},
    \end{equation*}
    where $\bigvee$ denotes the closed linear span. Then $[\calM]_{T}$ is the smallest closed subspace of $\calH$ containing $\calM$ and invariant under $T$. If there is no ambiguity, we simply write $[\mathcal{M}]$ instead of $\left[ \mathcal{M} \right]_{T}$. We will primarily deal with the case $\calM = \ker T^{*}$.

    We say that \textit{$T$ possesses the wandering subspace property} if $[\ker T^{*}]_T = \calH$ and say \textit{$T$ is analytic} if
    \begin{equation*}
        \calH_{\infty} (T) := \bigcap_{n \in \Z_{\ge 0}} T^n (\calH) = \{ 0 \}.
    \end{equation*}

    The classical Wold decomposition theorem (see \cite[Theorem 1.1]{MR2760647} and \cite[\S 1.3]{MR822228}) states that every isometry
    $T \in \calB(\calH)$ admits the orthogonal decomposition $\calH = \calH_{\infty} (T) \oplus [\ker T^{*}]$, where $\calH_{\infty} (T)$ is a reducing subspace for $T$ and the operator $T\mid_{\calH_{\infty} (T)}$ is unitary on $\calH_{\infty} (T)$. Moreover, for an isometry $T$, $[\ker T^*] = \bigoplus_{n \in \Z_{\ge 0}} T^n (\ker T^{*})$ where we use $\bigoplus$ to denote orthogonal direct sums of closed subspaces of $\calH$.
    
    Motivated by the classical Wold decomposition, S. Shimorin introduced the notion of a \textit{Wold-type decomposition} (see \cite[Definition 1.1]{zbMATH01572594}). We say that an operator $T$ \textit{admits Wold-type decomposition} if $\calH_{\infty} (T)$ is a reducing subspace for $T$, the  operator $T\mid_{\calH_{\infty} (T)}$ is unitary and $\calH = \calH_{\infty} (T) \oplus [\ker T^{*}]$. In particular, the classical Wold decomposition theorem shows that every isometry admits Wold-type decomposition. Moreover, if an operator $T \in \calB (\calH)$ admits the Wold-type decomposition, then it is analytic if and only if it possesses the wandering subspace property.
    
    Building on the work of S. Richter (see \cite[Theorem 1]{MR936999}), S. Shimorin showed that if $T$ is a bounded operator satisfying either 
    \begin{equation*}
        \norm{T^{2}x}^{2} + \norm{x}^{2} \le 2 \norm{Tx}^{2}, \qquad x \in \calH,
    \end{equation*}
    or 
    \begin{equation*}
        \norm{Tx + y}^{2} \le 2 (\norm{x}^2 + \norm{Ty}^2)\qquad x,y \in \calH, 
    \end{equation*}
    then $T$ admits Wold-type decomposition (see \cite[Theorem 3.6]{zbMATH01572594}). 
    
    For $m \in \Z_{\ge 1}$ and $T\in \calB (\calH)$, we define
    \begin{equation}
        \label{eq:beta-m}
        \beta_{0} (T) = I, \qquad\beta_m (T) := \sum_{j=0}^{m} (-1)^{m-j} \binom{m}{j} T^{*j} T^j.
    \end{equation}
    We say that $T$ is a \textit{$m$-isometry} if $\beta_m (T) = 0$ and say $T$ is \textit{$m$-concave} if $\beta_m (T) \le 0$. We say that $T$ is \textit{norm-increasing} if $\beta_1(T)=T^*T-I\ge 0$. 
    
    The notion of $m$-isometry was introduced by J. Agler in \cite{MR1037599} and was subsequently studied by him and M. Stankus in \cite{Agler1995, MR1346617, MR1382018}. The class of $2$-concave operators, also simply known as concave operators, was introduced by S. Richter in his work on the Dirichlet shift \cite{MR936999, MR1013337} and its higher-order counterparts, namely, $m$-concave operators were introduced by S. Shimorin in \cite{zbMATH01572594}.
    
    As mentioned above, S. Shimorin was able to show that concave operators (that is, operators $T \in\calB (\calH)$ corresponding to the inequality $\beta_2 (T) \le 0$) admit Wold-type decompositions. He also showed that if $T\in\calB(\calH)$ is a norm-increasing $m$-concave operator then $\calH_\infty(T)$ reduces $T$, and the restriction $T|_{\calH_\infty(T)}$ is unitary (see \Cref{prop:Shimorin-hyper-range} and \cite[Proposition 3.4]{zbMATH01572594}). In view of these results, S. Shimorin posed the following question:

    \begin{question}[{\cite[p. 185]{zbMATH01572594}}]
        Does every norm-increasing $m$-concave operator admit a Wold-type decomposition for $m\in \Z_{\ge 3}$?
        \label{qn:Shimorin}
    \end{question}

    By \cite[Proposition~3.4]{zbMATH01572594} (see also \Cref{prop:Shimorin-hyper-range}), \Cref{qn:Shimorin} is equivalent, for each fixed $m\in\Z_{\ge 3}$, to the following question:

    \begin{question}
        Does every analytic norm-increasing $m$-concave operator possess the wandering subspace property for $m\in \Z_{\ge 3}$?
        \label{qn:Shimorin-analytic-version}
    \end{question}

    In \cite[Example 3.1]{MR4078097}, A. Anand, S. Chavan and S. Trivedi constructed a family of analytic cyclic $3$-isometries which are norm-increasing on the orthogonal complement of a one dimensional subspace but do not possess the wandering subspace property. Their examples are weighted shifts on one-circuit directed graphs associated with rooted directed trees. They further showed that, in their setting, a counterexample to \Cref{qn:Shimorin} cannot be obtained from a weighted shift on a one-circuit directed graph (see \cite[Corollary~4.4]{MR4078097}). This, however, does not resolve \Cref{qn:Shimorin}, as these examples are norm-increasing only on the orthogonal complement of a one-dimensional subspace, rather than on all of $\calH$.

    Motivated by the work of S. Chavan and S. Trivedi in \cite{MR5052237}, we investigate \Cref{qn:Shimorin} in the setting of weighted shifts on directed trees. It is known that every bounded weighted shift on a rooted directed tree admits Wold-type decomposition (see \cite[Proposition 1.3.4]{MR3740250}). However, this is no longer true for rootless directed trees. In fact, S. Chavan and S. Trivedi studied this problem in \cite{MR5052237} and obtained a characterisation of left-invertible weighted shifts on rootless directed trees with positive weights which admit Wold-type decomposition (see \cite[Theorem 1.3]{MR5052237}).

    In our first main result, we show that every norm-increasing $3$-concave weighted shift on a directed tree, rooted or rootless, admits a Wold-type decomposition. We refer the reader to \Cref{sec:preliminaries} for the preliminaries and the notations used for the rest of the article.

    \begin{restatable}{mainthm}{threeConcave}
        Let $\scrT=( V, E)$ be a directed tree, and let $S_{\lambda}\in\calB(\ell^2( V))$ be a norm-increasing $3$-concave weighted shift with complex weights. Then $S_{\lambda}$ admits Wold-type decomposition.
        \label{thm:3-concave}
    \end{restatable}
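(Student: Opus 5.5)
By \Cref{prop:Shimorin-hyper-range}, for a norm-increasing $m$-concave operator the subspace $\calH_\infty(S_\lambda)$ already reduces $S_\lambda$ and the restriction to it is unitary. The theorem is therefore equivalent to
\begin{equation*}
\ell^2(V) = \calH_\infty(S_\lambda) \oplus [\ker S_\lambda^*].
\end{equation*}
If $\scrT$ is rooted, this is known for every bounded weighted shift by \cite[Proposition 1.3.4]{MR3740250}, so the plan concentrates on the rootless case. There I would use the Chavan--Trivedi characterisation \cite[Theorem 1.3]{MR5052237} of left-invertible weighted shifts on rootless trees admitting Wold-type decomposition. Norm-increasing gives $S_\lambda^*S_\lambda \ge I$, hence left-invertibility.

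\emph{Reduction to positive weights.} Weighted shifts on directed trees with complex weights are unitarily equivalent, via a diagonal unitary, to those with weights $|\lambda_v|$. Both norm-increasing and $m$-concavity are preserved under this equivalence, so we may assume all weights are positive. If some weights vanish, the tree splits into subtrees: $S_\lambda$ becomes a direct sum of weighted shifts on directed trees (forests), and these components are rooted except possibly one. So we may also assume the weights are nonzero.

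\emph{Translating the hypotheses.} $S_\lambda^{*n}S_\lambda^n$ is diagonal, with entry $\|S_\lambda^n e_v\|^2 =: a_n(v)$. So $\beta_3(S_\lambda)\le 0$ means that for each vertex $v$ the sequence $n\mapsto a_n(v)$ has nonpositive third differences. Norm-increasing means $a_1(v)\ge a_0(v)=1$. A standard argument for sequences with $\Delta^3 a\le 0$ and $a$ nondecreasing at each step (norm-increasing applied to $S^n e_v$ gives $a_{n+1}(v)\ge a_n(v)$) shows $\Delta^2 a \ge 0$ cannot fail badly. More precisely, $\Delta a_n(v)$ is nonnegative and $\Delta^2 a_n$ is nonincreasing. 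Hence $\Delta^2 a_n(v)\ge 0$ for all $n$, for otherwise $\Delta a_n\to-\infty$. So each $a_n(v)$ grows at most quadratically and is convex in $n$.

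\emph{Checking the criterion.} The criterion in \cite{MR5052237} (as I recall it) is a condition along the unique backward path $v, \pr(v), \pr^2(v),\dots$ of the rootless tree. It involves the orthogonal projection onto $[\ker S_\lambda^*]$ and requires a certain series built from products of weights along this path, weighted by the branching data, to diverge or to satisfy an equality. The plan is to express $\|S_\lambda^n e_{\pr^n(v)}\|^2$ in terms of these products and use the convexity and quadratic bounds above. The goal is to show that the component of $e_v$ orthogonal to $[\ker S_\lambda^*]$ lies in $\calH_\infty$: one decomposes $e_v$ into its part in $S_\lambda^n\ell^2(V)$ and a part in $\bigvee_{k<n}S_\lambda^k\ker S_\lambda^*$. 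Using the Cauchy dual $S_\lambda' = S_\lambda(S_\lambda^*S_\lambda)^{-1}$, also a weighted shift on $\scrT$, I would show that the projection of $e_v$ onto $S_\lambda^n(\ell^2(V))$ converges as $n\to\infty$.

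\emph{Main obstacle.} The hard step is this convergence and the orthogonality of the limit, i.e.\ controlling cross terms between different generations. For concave ($m=2$) operators, Shimorin's proof uses $\|S'^n\|$ bounded. For $m=3$ one only gets $\|S'^n x\|$ bounded on the relevant vectors, via the convexity of $n\mapsto\|S^n x\|^2$ combined with the tree structure, which makes everything diagonal along generations. I would first try to prove directly the tree-specific inequality
\begin{equation*}
\sum_{n}\frac{1}{a_n(\pr^n(v))}\cdot(\text{branching factor}) = \infty \quad\text{or the required equality},
\end{equation*}
using $a_n\lesssim n^2$ together with monotonicity. This is where I expect the exponent $3$ to be sharp, which is consistent with the $4$-isometry counterexamples.
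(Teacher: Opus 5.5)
Your reductions are correct and match the paper's: the diagonal unitary to nonnegative weights, splitting along zero weights into a direct sum of positive-weight shifts (at most one of them rootless), the rooted case via \cite{MR3740250}, and reading $\beta_1(S_\lambda)\ge0$, $\beta_3(S_\lambda)\le0$ as $\Delta a_n(v)\ge0$ and $\Delta^3a_n(v)\le0$. This gives $\Delta^2a_n(v)\ge0$ and $a_n(v)\le1+n\beta_1(v)+\binom{n}{2}\beta_2(v)$, where $\beta_j(v)=\ip{\beta_j(S_\lambda)e_v,e_v}$, which is exactly how the paper uses \Cref{lem:m-concave}. Splitting $e_v$ as $S_\lambda^nL_\lambda^ne_v$ plus a vector of $\bigvee_{k<n}S_\lambda^k\ker S_\lambda^*$ is also the right framework.

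However, the rootless positive-weight case, which is the entire content of the theorem, is not proved. You defer it to a criterion from \cite{MR5052237} that you cannot state and to an unspecified divergence condition, and you list it yourself as the main obstacle. That is a genuine gap, and the tools you name for it are too weak. Set $u_k=\pr^{\ip{k}}(u)$ and $D_n=\lambda_{u_n\mid u}^{-2}\prod_{k=1}^n(1+\beta_1(u_k))$. Then \Cref{lem:all-about-L-lambda} gives $\norm{S_\lambda^nL_\lambda^ne_u}^2=a_n(u_n)/(\lambda_{u_n\mid u}^2D_n^2)$. Since $\lambda_{u_n\mid u}^2D_n\ge1$, your quadratic bound and the forward monotonicity of $n\mapsto a_n(v)$ at a fixed vertex only give $\norm{S_\lambda^nL_\lambda^ne_u}^2\lesssim n^2/D_n$, and nothing forces $D_n\gtrsim n^2$.

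You should also aim for a bounded subsequence rather than convergence. A weak limit point of $S_\lambda^{n}L_\lambda^{n}e_u$ lies in every closed subspace $S_\lambda^k\ell^2(V)$, hence in $\calH_\infty(S_\lambda)$, so $e_u\in\calH_\infty(S_\lambda)\oplus[\ker S_\lambda^*]$. This treats analytic and non-analytic shifts at once; the paper instead first proves the dichotomy of \Cref{prop:analytic-or-isometry} and then uses \Cref{prop:equivalences-of-WSP}. Convergence of the whole sequence is more than the available estimates give.

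The missing idea is to propagate the defect relations backward along $u_0,u_1,u_2,\dots$ rather than forward in $n$. Let $M$ be a common bound for $\beta_1(v),\beta_2(v)$. The relations $S_\lambda^*S_\lambda=I+\beta_1(S_\lambda)$, $S_\lambda^*\beta_1(S_\lambda)S_\lambda=\beta_1(S_\lambda)+\beta_2(S_\lambda)$ and $S_\lambda^*\beta_2(S_\lambda)S_\lambda=\beta_2(S_\lambda)+\beta_3(S_\lambda)\le\beta_2(S_\lambda)$ give, at each vertex, $1+\beta_1(v)=\sum_{w\in\Chi(v)}\lambda_w^2$, $\beta_1(v)+\beta_2(v)=\sum_{w\in\Chi(v)}\lambda_w^2\beta_1(w)$ and $\beta_2(v)\ge\sum_{w\in\Chi(v)}\lambda_w^2\beta_2(w)$. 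Isolate the child $u_{k-1}$ of $u_k$ and discard the remaining nonnegative contributions. This yields:
\begin{itemize}
\item $D_n\ge1$ and $D_n\ge\lambda_{u_n\mid u}^{-2}+\sum_{k=1}^n\beta_1(u_k)\lambda_{u_k\mid u}^{-2}$;
\item $n\mapsto\beta_2(u_n)\lambda_{u_n\mid u}^{-2}$ is nondecreasing, and so is $n\mapsto\beta_1(u_n)\lambda_{u_n\mid u}^{-2}$ when the former vanishes identically;
\item $H_n:=\sum_{k=1}^n\beta_2(u_k)\lambda_{u_k\mid u}^{-2}\le MD_n$.
\end{itemize}
After splitting $a_n(u_n)\le1+n\beta_1(u_n)+\binom{n}{2}\beta_2(u_n)$, these bound the first two resulting terms. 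The third term satisfies $\binom{n}{2}\beta_2(u_n)/(\lambda_{u_n\mid u}^2D_n^2)\le M^2\binom{n}{2}\beta_2(u_n)/(\lambda_{u_n\mid u}^2H_n^2)$, but it is only bounded along a subsequence. That subsequence comes from the Ces\`aro-type \Cref{lem:Cesaro}: $\liminf_{n}c_n/\sigma_n^2<\infty$ for a nondecreasing nonnegative sequence $c_n$, not identically zero, with Ces\`aro means $\sigma_n$.

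Finally, your remark about the Cauchy dual targets the wrong quantity. For norm-increasing $S_\lambda$, $S_\lambda'$ is a contraction, so $\norm{(S_\lambda')^nx}$ is trivially bounded. What must be controlled is $\norm{S_\lambda^nL_\lambda^nx}$, with $L_\lambda=(S_\lambda')^*$.
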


    Hence, \Cref{qn:Shimorin} has an affirmative answer for $m=3$ in the aforementioned setting. We remark that the norm-increasing hypothesis cannot be dropped: there are analytic, non-cyclic, left-invertible, $3$-isometric weighted shifts on directed trees with positive weights which do not possess the wandering subspace property (see \Cref{ex:analytic-non-cyclic-no-WSP}). Such counterexamples cannot be cyclic, since analytic cyclic weighted shifts with positive weights force the directed tree to be rooted (see \Cref{remark:analytic-finite-codimension-rooted}). We are, however, unable to answer whether every norm-increasing $3$-concave operator on a Hilbert space admits Wold-type decomposition.

    We now consider the higher-order case, that is, $m \in \Z_{\ge 4}$. In \cite[Proposition 5.1]{MR5052237}, S. Chavan and S. Trivedi constructed a family of analytic, norm-increasing weighted shifts which do not admit Wold-type decompositions. These weighted shifts are constructed on the \textit{rootless quasi-Brownian directed tree of valency $2$} (see \cite[Example 5.1(c)]{MR4019095}). See \Cref{fig:quasi-brownian-2-intuit} for an illustration of this tree, where the branching vertices are filled in. 

    \begin{figure}[H]
        \centering  
        \includegraphics[width=0.28\textwidth]{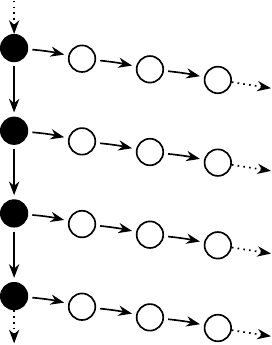}
        \caption{The rootless quasi-Brownian tree of valency $2$}
        \label{fig:quasi-brownian-2-intuit}
    \end{figure}

    As mentioned earlier, every bounded weighted shift on a rooted directed tree admits a Wold-type decomposition. Therefore, within the setting of directed trees, a counterexample to \Cref{qn:Shimorin} must arise on a rootless directed tree. The tree must also be leafless, since  otherwise the weighted shift cannot be injective and, consequently, cannot be norm-increasing (see \cite[Proposition 3.1.7]{MR2919910}).  

    To obtain a counterexample to \Cref{qn:Shimorin-analytic-version}, one must look for leafless, rootless trees which possess analytic weighted shifts. For positive weights, an obstruction for having analytic weighted shifts is the existence of a \textit{generalised root}, a notion which was introduced in \cite{MR3532172} (see \Cref{def:generalised-root}).
    
    We note in \Cref{prop:generalised-root-not-analytic} that no weighted shift with positive weights on a tree admitting a generalised root can be analytic. Moreover, a rootless, leafless directed tree with a branching vertex admits a generalised root whenever its \textit{branching index is finite} or or one of its \textit{generations} is finite (see \Cref{prop:characterisation-of-finite-branching-rootless-trees,prop:some-finite-generation-implies-generalised-root}). The rootless quasi-Brownian directed tree of valency $2$ is leafless, has infinite branching index, and has every generation infinite. Hence, it is a natural setting for constructing counterexamples to \Cref{qn:Shimorin-analytic-version}, and hence to \Cref{qn:Shimorin}.
    
    For our second main result, we classify all the $4$-isometries within the class of weighted shifts introduced by S. Chavan and S. Trivedi (see \Cref{thm:classify-4-isometries}). We use this classification to produce analytic, norm-increasing $4$-isometries which do not possess the wandering subspace property. Since an analytic operator admits Wold-type decomposition if and only if it possesses the wandering subspace property, these operators do not admit Wold-type decompositions. Consequently, our second main result answers \Cref{qn:Shimorin} in the negative for $m \in \Z_{\ge 4}$.

    \begin{restatable}{mainthm}{mConcaveCounterexample}
    \label{thm:counterexample}
        For every $m\in \Z_{\ge 4}$, there is a family of analytic, norm-increasing $m$-concave weighted shift on the rootless quasi-Brownian directed tree of valency $2$ with positive weights which do not admit a Wold-type decomposition.
    \end{restatable}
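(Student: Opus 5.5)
The plan is to reduce everything to the case $m=4$ and then propagate to all $m \ge 4$. The propagation rests on a standard fact: if $\beta_m(T)\le 0$ then $\beta_{m+1}(T)\le 0$ (Shimorin's hereditary property for $m$-concavity, valid in particular for $m$-isometries, since $\beta_{m+1}(T) = T^*\beta_m(T)T - \beta_m(T)$ and $\beta_m(T)=0$ gives $\beta_{m+1}(T)=0$). Hence every $4$-isometry is an $m$-isometry, and in particular $m$-concave, for every $m\ge 4$. It therefore suffices to construct a family of analytic, norm-increasing $4$-isometric weighted shifts $S_\lambda$ with positive weights on the rootless quasi-Brownian directed tree of valency $2$ that fail the wandering subspace property. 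Since an analytic operator admits a Wold-type decomposition if and only if it has the wandering subspace property, these operators do not admit Wold-type decompositions.

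The construction will use the Chavan--Trivedi class of weighted shifts of \cite[Proposition 5.1]{MR5052237}. These are shifts whose weights are determined by finitely many parameters along the generations, with the weights on the two children of each branching vertex related in a prescribed way. I will invoke \Cref{thm:classify-4-isometries}, which classifies the $4$-isometries within this class. This lets me choose the parameters so that $\beta_4(S_\lambda)=0$. In practice this means that on each "trunk" the quantity $n\mapsto \|S_\lambda^n e_v\|^2$ is a polynomial of degree at most $3$, with consistency conditions at branching vertices. Norm-increasingness, $\|S_\lambda e_v\|\ge \|e_v\|$, then becomes a finite set of inequalities on the polynomial coefficients, namely nonnegativity of the first difference. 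I will pick coefficients that make these differences strictly positive but genuinely of degree $3$, so that the operator is a strict $4$-isometry and not a $3$-isometry; this is consistent with \Cref{thm:3-concave}, which forbids a $3$-concave counterexample.

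Two further properties remain to be checked. For analyticity, I will estimate $\bigcap_n S_\lambda^n(\ell^2(V))$. Because the tree is rootless with every generation infinite and no generalised root, I expect any $f$ in the hyperrange to satisfy, for each vertex $v$, $|f(v)|^2\lesssim \|f\|^2/\|S_\lambda^{n}\text{-paths}\|$-type bounds. Concretely, $f=S_\lambda^n g_n$ forces the mass of $f$ on the descendants of the $n$-th ancestor of $v$ to be controlled by the weight products, and these products grow polynomially along the ancestral line. I will use the explicit polynomial growth together with the valency-$2$ branching to show that $f(v)=0$.

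For the failure of the wandering subspace property, I will compute $\ker S_\lambda^*$ explicitly. It is spanned by vectors supported on the children of branching vertices, with coefficients orthogonal to the weights. I will then exhibit a nonzero vector orthogonal to $[\ker S_\lambda^*]$, following the approach of \cite[Proposition 5.1]{MR5052237}: a vector built along a single infinite backward path, whose inner products with $S_\lambda^n(\ker S_\lambda^*)$ all vanish by a telescoping identity in the weights. I expect this step, together with verifying that the chosen parameters are simultaneously compatible with the $4$-isometry classification and with norm-increasingness, to be the main obstacle. The orthogonality requires a square-summability condition along the backward path, which must be balanced against the cubic growth of the moments. My first attempt will be to choose the cubic term so that backward weights decay fast enough to make the candidate vector lie in $\ell^2$.
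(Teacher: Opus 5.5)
Your skeleton matches the paper's: reduce to $m=4$, work in the Chavan--Trivedi family, and use \Cref{thm:classify-4-isometries}. But the step that actually carries the proof is missing. You say the classification ``lets you choose'' parameters with $\beta_4(S_\lambda)=0$, yet you never exhibit $a_m,b_m>0$ with $\sup_m a_m,\sup_m b_m<\infty$ and $\inf_m b_m>0$ satisfying \eqref{eq:4-isometry-characterisation}, and this condition is rigid. The paper does this in \Cref{cor:classify-4-isometries}. Take $b_m\equiv b$; the bound $2\inf_m b_m\le d\le 2\sup_m b_m$ forces $d=2b$. Then \eqref{eq:4-isometry-characterisation} reads $a_{m-1}-a_m=\delta_2(m)-\delta_3(m)$, so $a_m=a$ for $m\ne 2$ and $a_2=a-1$, and positivity requires $a>1$ (e.g.\ $p_m(t)=(1+t)^2$ for $m\ne2$, $p_2(t)=1+t+t^2$). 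Once such a family is in hand, analyticity, norm-increase and failure of the wandering subspace property need no work. \Cref{prop:Chavan-Trivedi-analytic-not-wsp} gives them for \emph{every} family satisfying \eqref{eq:hypothesis-on-polynomials}, and they are already in the conclusion of \Cref{thm:classify-4-isometries}, which you cite. So there is nothing to balance and no cubic term to tune: the weights come from quadratics, and the cubic growth of $\norm{S_\lambda^n e_{(0,m)}}^2$ is produced by the branching. Separately, your general ``hereditary'' claim that $\beta_m(T)\le 0$ implies $\beta_{m+1}(T)\le 0$ is false. By \Cref{lem:m-concave} applied with $m+1$, an operator that is both $m$- and $(m+1)$-concave has $\beta_m(T)\ge 0$, hence is an $m$-isometry. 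Only the isometric case propagates, and that is all you actually use.

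The witness you plan for the failure of the wandering subspace property also cannot exist. In $\scrT_{qb}$, $\ker S_\lambda^*$ is spanned by $k_m:=\lambda_{(0,m-1)}e_{(1,m)}-\lambda_{(1,m)}e_{(0,m-1)}$, $m\in\Z$. A backward path eventually runs along the trunk $\{(0,j):j\ge m_0\}$. For $m>m_0$, orthogonality to $k_m$ forces the vector to vanish at $(0,m-1)$. The remaining coordinates at $(1,m_0),\dots,(n,m_0)$ are then killed by orthogonality to $S_\lambda^j k_{m_0}$, $0\le j<n$. So no nonzero vector supported on a single backward path is orthogonal to $[\ker S_\lambda^*]$. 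The orthogonal complement is $\calH_\infty(S_\lambda')$ (cf.\ \Cref{prop:analytic-wsp-in-terms-of-cauchy-dual}). A nonzero element of it is supported on the generation $\{(n,n):n\in\Z_{\ge 0}\}$, namely $\sum_{n\ge 0}\frac{\lambda'_{(0,n)\mid(n,n)}}{\lambda'_{(0,n)\mid(0,0)}}e_{(n,n)}$. Its squared norm is $\alpha_{\lambda'}((0,0))=2+4\sum_{n\ge 2}1/p_n(n-1)$, which is finite because $\inf_m b_m>0$. Likewise, analyticity comes from the weights, not from the shape of the tree: by \Cref{prop:analytic-criterion} it amounts to $\alpha_\lambda((0,0))=1+\sum_{n\ge 1}p_n(n-1)=\infty$.
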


    As mentioned earlier, the operators in \Cref{thm:counterexample} may be chosen to be $4$-isometries. Together with \Cref{thm:3-concave}, this shows a contrast between the cases $m=3$ and $m\ge4$ for
    norm-increasing $m$-concave weighted shifts on directed trees: the Wold-type decomposition always holds when $m\in \{ 1,2,3 \}$, whereas it may fail for every $m\in \Z_{\ge 4}$.

    The paper is organized as follows. In \Cref{sec:preliminaries}, we recall the necessary preliminaries on directed trees and weighted shifts on directed trees. In \Cref{sec:positive-theorem}, we prove \Cref{thm:3-concave}. In \Cref{sec:generalised-root}, we discuss sufficient conditions for the existence of a generalised root. In \Cref{sec:4-isometry}, we study the class of weighted shifts considered in \cite{MR5052237}, classify the 4-isometries within this class, and use this classification to establish \Cref{thm:counterexample}. In \Cref{sec:3-isometry}, we produce examples of analytic $3$-isometries on the quasi-Brownian tree of valency $2$ which do not possess the wandering subspace property, are not norm-increasing and are not cyclic. In \Cref{sec:last}, we state some open problems.

\section{Preliminaries}\label{sec:preliminaries}

We remind the reader again that throughout the paper $\mathbb Z_{\geq x}:=\{n\in\mathbb Z:n\geq x\}$ for $x\in\mathbb R$. All Hilbert spaces are complex, and $\mathcal B(\mathcal H)$ denotes the $C^*$-algebra of bounded linear operators on a Hilbert space $\mathcal H$.

\subsection{Left-invertible and \texorpdfstring{$m$}{m}-concave operators}  We say that an operator $T\in\calB(\calH)$ is \textit{left-invertible} if it has a bounded left inverse, or equivalently, if it is bounded below.

We remind the reader again that an operator $T\in \calB (\calH)$ is called a \textit{$m$-isometry} if $\beta_m (T) = 0$ and is called \textit{$m$-concave} if $\beta_m (T) \le 0$ (see \cref{eq:beta-m}). Moreover, we say that $T$ is \textit{norm-increasing} if $\beta_1(T)=T^*T-I\ge 0$. Since every norm-increasing operator is bounded below, it is left-invertible. If $T$ is left-invertible, then $T^* T$ is invertible and we define the \textit{Cauchy dual $T'$ of $T$} by $T' := T (T^* T)^{-1}$. Moreover, we denote the \textit{Moore--Penrose inverse of $T$} by $L$. Then $L= T'^{*} = (T^* T)^{-1}T^*$ and $(T')' = T$.

We note the following identity:
\begin{equation}
    \beta_{m+1} (T) = T^* \beta_m (T) T -\beta_m (T), \qquad m \in \Z_{\ge 0}, \,  T \in \calB (\calH).
    \label{eq:beta-m-recursion}
\end{equation}
This identity shows that if $T \in \calB (\calH)$ is a $m$-isometry then it is a $(m+1)$-isometry. We call any $m$-isometry which is not a $(m-1)$-isometry a \textit{strict $m$-isometry}. We remark that the proof of the above identity follows simply by induction. 

We state a lemma which will be useful later to establish \Cref{thm:3-concave}.

\begin{lemma}[{\cite[Lemma 5.1]{MR4534903}}]
    Let $T$ be a bounded $m$-concave operator on a Hilbert space $\calH$ for some $m \in \Z_{\ge 2}$. Then 
    \begin{equation*}
        (T^{*})^{n} T^{n} \le \sum_{j =0}^{m-1} \binom{n}{j} \beta_j (T), \qquad n \in \Z_{\ge 0}.
    \end{equation*}
    Moreover, $\beta_{m-1} (T) \ge 0$.
    \label{lem:m-concave}
\end{lemma}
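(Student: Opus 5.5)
We expand $\beta_j(T)$ by the binomial formula and prove the inequality by induction on $n$, using the recursion \eqref{eq:beta-m-recursion}. First note the exact identity
\begin{equation*}
    T^{*n}T^{n} = \sum_{j=0}^{n} \binom{n}{j} \beta_j(T), \qquad n \in \Z_{\ge 0}.
\end{equation*}
It follows by induction from $T^{*}\beta_j(T)T = \beta_{j+1}(T)+\beta_j(T)$ (which is \eqref{eq:beta-m-recursion}) together with Pascal's rule. Formally, this is the identity $E^n=(1+\Delta)^n$ for the shift $E$ and the forward difference $\Delta$. The claimed inequality therefore amounts to
\begin{equation*}
    \sum_{j=m}^{n}\binom{n}{j}\beta_j(T)\le 0 .
\end{equation*}

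We prove the claim by induction on $n$. Write $R_n:=\sum_{j=0}^{m-1}\binom{n}{j}\beta_j(T)$. For $n\le m-1$ we have equality, by the identity above. For the inductive step, assume $T^{*n}T^n\le R_n$ and conjugate by $T$. Using $T^*\beta_j T=\beta_{j+1}+\beta_j$ and Pascal's rule, we get
\begin{equation*}
    T^{*(n+1)}T^{n+1}\le T^*R_nT = R_{n+1}+\binom{n}{m-1}\beta_m(T).
\end{equation*}
Since $\beta_m(T)\le 0$ and $\binom{n}{m-1}\ge 0$, this gives $T^{*(n+1)}T^{n+1}\le R_{n+1}$, completing the induction. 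Here we used only that conjugation $X\mapsto T^*XT$ preserves the operator order.

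For the positivity of $\beta_{m-1}(T)$, fix $x\in\calH$. The first part gives
\begin{equation*}
    0\le \norm{T^nx}^2\le \sum_{j=0}^{m-1}\binom{n}{j}\ip{\beta_j(T)x,x},
\end{equation*}
and the right-hand side is a polynomial in $n$ of degree at most $m-1$ with leading coefficient $\ip{\beta_{m-1}(T)x,x}/(m-1)!$. Suppose $\ip{\beta_{m-1}(T)x,x}<0$. Then this polynomial tends to $-\infty$ as $n\to\infty$, which contradicts the lower bound $0$. Hence $\ip{\beta_{m-1}(T)x,x}\ge 0$ for every $x$. We need $m\ge 2$ so that $m-1\ge1$ and the term of degree $m-1$ genuinely dominates the lower-order terms.

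The main obstacle is the bookkeeping in the inductive step, namely checking that the top term $\binom{n}{m-1}\beta_{m-1}$ produces exactly $\binom{n}{m-1}\beta_m$ beyond the truncation and nothing else. The rest of the argument is routine.
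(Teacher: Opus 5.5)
Your proof is correct and complete. The paper gives no proof of its own. Apart from noting, exactly as in your base case, that for $n<m$ the inequality is the identity $T^{*n}T^n=\sum_{j=0}^{n}\binom{n}{j}\beta_j(T)$, it defers to \cite{MR3314868}.

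The standard argument behind such results works with the scalar sequence $a_n=\norm{T^nx}^2$, whose forward differences are $\Delta^j a_n=\ip{\beta_j(T)T^nx,T^nx}$, and applies Newton's forward-difference formula with remainder. Your induction is the same mechanism lifted to the operator level. Unrolling it gives the exact identity
\begin{equation*}
T^{*n}T^n=\sum_{j=0}^{m-1}\binom{n}{j}\beta_j(T)+\sum_{k=m-1}^{n-1}\binom{k}{m-1}T^{*(n-1-k)}\beta_m(T)T^{n-1-k}.
\end{equation*}
Its quadratic form at $x$ is precisely Newton's formula for $a_n$, and the remainder is $\le 0$ because $\beta_m(T)\le 0$.

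Your version has the advantage of being self-contained: it uses only \eqref{eq:beta-m-recursion} and the fact that $X\mapsto T^*XT$ preserves order. The scalar version needs neither adjoints nor an operator order, so it applies verbatim to conditions stated purely through norms, e.g.\ with $\norm{T^nx}^p$ on Banach spaces. Your growth argument for $\beta_{m-1}(T)\ge 0$ is also fine, since $\beta_{m-1}(T)$ is self-adjoint and so nonnegativity of its quadratic form suffices.
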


    In \cite[Lemma 5.1]{MR4534903}, the previous inequality is stated for $n \in \Z_{\ge m}$. If $n \in \Z_{\ge 0}$ and $n < m$, we have
        \begin{equation*}  
            T^{*n}T^n = \sum_{j=0}^{n}\binom nj\beta_j(T) = \sum_{j=0}^{m-1}\binom nj\beta_j(T), 
        \end{equation*} 
    where $\binom{n}{j}=0$ for $j \in \Z_{\ge n+1}$. Thus, for $n \in \Z_{\ge 0}$ and $n < m$, the inequality holds as an equality. This justifies the formulation of \Cref{lem:m-concave} for every $n\in\Z_{\ge 0}$. For a proof of \Cref{lem:m-concave}, see \cite[Theorem 2.5 and Corollary 2.4]{MR3314868}.

    We state three results due to S. Shimorin that will be used later.

    \begin{proposition}[{\cite[Proposition~3.4]{zbMATH01572594}}]
    Let $m \in \Z_{\ge 2}$, and let $T\in\calB(\calH)$ be a norm-increasing $m$-concave operator. Then $\calH_\infty(T)$ reduces $T$, and the restriction $T|_{\calH_\infty(T)}$ is unitary.
    \label{prop:Shimorin-hyper-range}
    \end{proposition}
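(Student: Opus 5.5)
The plan is to show first that $\calH_\infty(T)$ is invariant under both $T$ and $T^*$, and then that $T$ restricted to it is isometric and surjective. Set $\calH_\infty := \calH_\infty(T) = \bigcap_{n\ge 0} T^n(\calH)$. Since $T$ is norm-increasing, it is bounded below, so each $T^n(\calH)$ is closed and $\calH_\infty$ is a closed subspace. From $T(T^n\calH)\subseteq T^{n}\calH$ we get $T\calH_\infty\subseteq\calH_\infty$.

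The key structural fact I would establish is that $T$ maps $\calH_\infty$ \emph{onto} itself. If $y\in\calH_\infty$, then for each $n$ we can write $y=T^{n+1}x_n$. By injectivity of $T$, the vector $z:=T^{n}x_n$ is the unique preimage $T^{-1}y$, and it is independent of $n$. Hence $z\in T^n\calH$ for every $n$, so $z\in\calH_\infty$ and $T|_{\calH_\infty}$ is a bijection of $\calH_\infty$.

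Next I show that $T|_{\calH_\infty}$ is an isometry, which is where $m$-concavity enters. Fix $y\in\calH_\infty$ and set $y_k:=T^{-k}y\in\calH_\infty$ for $k\ge0$; these are well defined by the bijectivity above. By \Cref{lem:m-concave}, for every $n$,
\begin{equation*}
\|y\|^2=\|T^n y_n\|^2\le\sum_{j=0}^{m-1}\binom nj\langle\beta_j(T)y_n,y_n\rangle .
\end{equation*}
Norm-increasing gives $\|y_n\|\le\|y\|$, so the right-hand side is bounded by a polynomial of degree $m-1$ in $n$ times $\|y\|^2$; this estimate alone is not decisive. A cleaner route uses the sequence $a_k:=\|T^k w\|^2$ for $w\in\calH_\infty$, extended to negative $k$ through $T^{-1}$ on $\calH_\infty$. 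By \eqref{eq:beta-m}, $m$-concavity means that the $m$-th forward difference of $(a_k)_{k\in\Z}$ is $\le0$. Norm-increasing makes $a_k$ nondecreasing, and, because $T^{-1}$ is a contraction on $\calH_\infty$, bounded below as $k\to-\infty$ (by $0$). I would then argue by induction that a sequence on all of $\Z$ with $\Delta^m a\le 0$, $\Delta a\ge0$ and bounded as $k\to-\infty$ has $\Delta a\equiv$ const. Since $\Delta^{m-1}a$ is nonincreasing and, via \Cref{lem:m-concave}, $\Delta^{m-1}a\ge0$, it converges as $k\to+\infty$. Going backward, the lower-order differences are polynomially controlled, and boundedness as $k\to-\infty$ forces each $\Delta^j a$ for $2\le j\le m-1$ to vanish. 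This yields $\Delta a$ constant, say $c\ge0$. Then $a_k=a_0+ck$ must stay nonnegative as $k\to-\infty$, which forces $c=0$, that is, $\|Tw\|=\|w\|$.

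Combining the two previous steps, $T|_{\calH_\infty}$ is a surjective isometry, hence unitary. It remains to get reducibility. For $y\in\calH_\infty$, write $y=Tz$ with $z\in\calH_\infty$. We have $T^*Ty-y=\beta_1(T)y$ with $\beta_1(T)\ge0$ and $\langle\beta_1(T)y,y\rangle=\|Ty\|^2-\|y\|^2=0$, so $\beta_1(T)y=0$ and $T^*Ty=y$ on $\calH_\infty$. Then $T^*y=T^*Tz=z\in\calH_\infty$, so $\calH_\infty$ is $T^*$-invariant and therefore reducing. The main obstacle is the middle step: making the difference-sequence argument rigorous on $\Z$, in particular controlling the lower-order differences as $k\to-\infty$ so that they are forced to vanish.
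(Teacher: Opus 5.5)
Your proof is correct. The paper gives no argument of its own for this statement: it quotes Shimorin's Proposition~3.4, which Shimorin phrases for the Cauchy dual. So there is nothing in the paper to compare your route against.

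The step you flag as the main obstacle closes in a few lines. The useful direction is $k\to-\infty$, not $k\to+\infty$, and it is cleanest to kill the top difference $\Delta^{m-1}a$ directly. Doing so also covers $m=2$. There your range $2\le j\le m-1$ is empty, so ``this yields $\Delta a$ constant'' does not follow from what you wrote.

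Put $b_k:=\Delta^{m-1}a_k=\langle\beta_{m-1}(T)T^kw,T^kw\rangle$. Since $\Delta b=\Delta^m a\le 0$, the sequence $b$ is nonincreasing on $\Z$, and $b\ge 0$ by \Cref{lem:m-concave}. Since $0\le a_k\le a_0$ for $k\le 0$, the sequence $\Delta^{m-2}a_k$ is bounded for $k\le 2-m$, because it only involves $a_k,\dots,a_{k+m-2}$.

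Suppose $b_{k_0}>0$ for some $k_0$. Then $b_k\ge b_{k_0}$ for all $k\le k_0$, and telescoping gives
\begin{equation*}
\Delta^{m-2}a_k=\Delta^{m-2}a_{k_0}-\sum_{i=k}^{k_0-1}b_i\le \Delta^{m-2}a_{k_0}-(k_0-k)\,b_{k_0}\longrightarrow-\infty \quad (k\to-\infty),
\end{equation*}
which is a contradiction. Hence $\Delta^{m-1}a\equiv 0$, so $(a_k)_{k\in\Z}$ is the restriction of a polynomial of degree at most $m-2$. Being bounded for $k\le 0$, it is constant, that is, $\|Tw\|=\|w\|$.

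The remaining steps go through as you wrote them: closedness of $\calH_\infty(T)$, surjectivity of $T$ on it via injectivity, and $T^*y=T^*Tz=z$ obtained from $\beta_1(T)y=0$.
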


    \begin{proposition}[{\cite[Proposition 2.10]{zbMATH01572594}}]
        Let $T$ be a left-invertible operator. If $\calH_{\infty} (T)$ is reducing for $T$ then $\calH_{\infty} (T) \subset \calH_{\infty} (T')$.
        \label{prop:hyperange-T-subset-hyperange-T'}
    \end{proposition}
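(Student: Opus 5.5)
The plan is to show that the Cauchy dual $T'$ maps $\calM := \calH_{\infty}(T)$ onto itself. Once $T'(\calM) = \calM$ is known, iterating gives $\calM = T'^{n}(\calM) \subset T'^{n}(\calH)$ for every $n \in \Z_{\ge 0}$. Intersecting over $n$ then yields $\calM \subset \calH_{\infty}(T')$. Since $T' = T(T^*T)^{-1}$, it suffices to check two things separately: $(T^*T)^{-1}$ maps $\calM$ onto $\calM$, and $T$ maps $\calM$ onto $\calM$.

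\emph{Step 1: $T(\calM) = \calM$.} A left-invertible operator is injective. For an injective map, the image of an intersection equals the intersection of the images, so
\begin{equation*}
    T\Big(\bigcap_{n \in \Z_{\ge 0}} T^{n}(\calH)\Big) = \bigcap_{n \in \Z_{\ge 0}} T^{n+1}(\calH) = \calM .
\end{equation*}
The last equality holds because the ranges $T^{n}(\calH)$ decrease in $n$, so dropping the $n=0$ term $\calH$ does not change the intersection. Note that this step does not use the reducing hypothesis.

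\emph{Step 2: $(T^*T)^{-1}(\calM) = \calM$.} Here the reducing hypothesis enters. Because $\calM$ reduces $T$, the orthogonal projection $P_{\calM}$ commutes with $T$ and with $T^*$, and hence with $A := T^*T$. Left-invertibility makes $A$ invertible in $\calB(\calH)$. Multiplying $P_{\calM} A = A P_{\calM}$ on both sides by $A^{-1}$ gives $A^{-1}P_{\calM} = P_{\calM}A^{-1}$. Thus $A^{-1}(\calM) \subset \calM$, and likewise $A(\calM) \subset \calM$. Combining the two inclusions, $\calM = A^{-1}A(\calM) \subset A^{-1}(\calM)$, so $A^{-1}(\calM) = \calM$.

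\emph{Conclusion.} Combining Steps 1 and 2, $T'(\calM) = T\big((T^*T)^{-1}(\calM)\big) = T(\calM) = \calM$, and the iteration in the first paragraph finishes the proof. The only point needing care is Step 2, where we must make sure $(T^*T)^{-1}$ maps $\calM$ \emph{onto} $\calM$ and not merely into it. The commutation argument handles this cleanly, so I do not expect a genuine obstacle.
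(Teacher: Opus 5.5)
Your proof is correct and complete. The paper gives no proof of this statement. It quotes the result from Shimorin, where it is phrased for the Cauchy dual, and the present form follows from $(T')'=T$. Your argument therefore supplies a self-contained proof rather than an alternative to one in the text.

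All three steps check out:
\begin{enumerate}
\item[(a)] Injectivity of $T$ alone gives $T(\calM)=\calM$.
\item[(b)] The reducing hypothesis makes $P_{\calM}$ commute with $T^*T$, and hence with $(T^*T)^{-1}$.
\item[(c)] Consequently $T'(\calM)=\calM$, which yields $\calM\subset T'^{n}(\calH)$ for every $n$.
\end{enumerate}

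Two minor remarks. First, using $P_{\calM}$ presupposes that $\calM$ is closed. This holds because each $T^{n}$ is bounded below and so has closed range; it is in any case implicit in the word ``reducing''. Second, the iteration only needs $\calM\subset T'(\calM)$. That already follows from $T(\calM)=\calM$ together with $T^*T(\calM)\subset\calM$, so the full equality $(T^*T)^{-1}(\calM)=\calM$ is more than you need.

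Conceptually, your argument says that $T|_{\calM}$ is invertible on $\calM$. Since $\calM$ reduces $T$, the restriction $T'|_{\calM}$ is the Cauchy dual of $T|_{\calM}$, and so it is invertible on $\calM$ as well.
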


    \begin{proposition}[{\cite[Corollary 2.8]{zbMATH01572594}}] 
    \label{prop:analytic-wsp-in-terms-of-cauchy-dual}
    Let $T$ be a left-invertible operator. The following statements hold:
    \begin{enumerate}
        \item[\rm (i)] $T$ is analytic if and only if $T'$ possesses the wandering subspace property.
        \item[\rm (ii)] $T'$ is analytic if and only if $T$ possesses the wandering subspace property.
    \end{enumerate}
    \end{proposition}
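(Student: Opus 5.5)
The plan is to prove a single identity and read off both parts from it. For a left-invertible operator $T$ I would show
\begin{equation*}
    \calH_\infty(T) = \big([\ker T^*]_{T'}\big)^{\perp}.
\end{equation*}
Granting this, $T$ is analytic, i.e. $\calH_\infty(T)=\{0\}$, if and only if $[\ker T^*]_{T'}=\calH$. It then remains to identify $\ker T'^*$ with $\ker T^*$. Since $T'^* = L = (T^*T)^{-1}T^*$ and $(T^*T)^{-1}$ is invertible, we have $\ker T'^* = \ker T^*$. So $[\ker T^*]_{T'} = [\ker T'^*]_{T'}$, and (i) follows. Part (ii) follows by applying (i) to $T'$. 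This is legitimate because $T'$ is left-invertible: indeed $T'^*T' = (T^*T)^{-1}$ is invertible. Moreover $(T')'=T$, as recalled in the preliminaries.

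The basic ingredients come first. Left-invertibility gives $LT=I$. It also gives that $\operatorname{ran}T$ is closed. The operator $TL = T(T^*T)^{-1}T^*$ is the orthogonal projection onto $\operatorname{ran}T$. Hence $\calH = \ker T^* \oplus \operatorname{ran} T$, and $x\in\operatorname{ran}T$ if and only if $x = TLx$. Using $(T'^n)^* = L^n$ and $\ker L=\ker T^*$, we get for $x\in\calH$
\begin{equation*}
    x\perp T'^n(\ker T^*)\ \text{for all } n \iff L^n x \perp \ker T^* \ \text{for all } n \iff L^n x\in \operatorname{ran}T \ \text{for all } n\in\Z_{\ge 0}.
\end{equation*}

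The key step is to show that the last condition is equivalent to $x\in\calH_\infty(T)$.

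Suppose first that $L^k x\in\operatorname{ran}T$ for every $k$. Then $L^k x = TL^{k+1}x$ for every $k$. Iterating gives $x = TLx = T^2L^2x=\cdots = T^nL^nx$, so $x\in T^n(\calH)$ for every $n$.

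Conversely, let $x\in\calH_\infty(T)$ and fix $k$. Choose $N>k$ and write $x=T^N y_N$. Since $LT=I$, we get $L^k x = T^{N-k}y_N\in\operatorname{ran}T$.

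Together these give $\calH_\infty(T) = ([\ker T^*]_{T'})^\perp$, since the orthocomplement of a closed span is the set of vectors orthogonal to all the spanning vectors.

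I expect no real obstacle in this argument. The only points needing care are the closedness of $\operatorname{ran}T$ and the formula $TL = P_{\operatorname{ran}T}$; both are standard consequences of $T$ being bounded below.
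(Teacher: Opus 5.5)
Your argument is correct and follows the standard route to this result; the paper gives no proof and only cites Shimorin. The route is to show $\calH_\infty(T)=\calH\ominus[\ker T^*]_{T'}$ and then use $\ker T'^*=\ker T^*$ and $(T')'=T$. Your direct iteration $x=TLx=T^2L^2x=\cdots$ replaces the telescoping identity $I-T^nL^n=\sum_{k=0}^{n-1}T^k(I-TL)L^k$ (the one the paper uses in the proof of \Cref{prop:equivalences-of-WSP}), and is equally valid.
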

    
    We remark that S. Shimorin states \Cref{prop:Shimorin-hyper-range,prop:hyperange-T-subset-hyperange-T'} in \cite[Proposition~3.4]{zbMATH01572594} and \cite[Proposition 2.10]{zbMATH01572594} respectively in terms of the Cauchy dual $T'$. The formulation above is equivalent and is obtained by using the fact that $(T')' = T$ for any left-invertible operator $T \in \calB (\calH)$.

\subsection{Directed Trees}
We closely follow \cite[Section 2.1]{MR2919910} and \cite[Section 1.3]{MR3740250}. A \textit{directed graph} is a pair $\scrG = ( V,  E)$ where $V \ne \emptyset$ and $ E \subset (V \times V) \setminus \{ (v,v) : v \in V \}$. An element of $ V$ is called a \textit{vertex of $\scrG$} and an element of $ E$ is called an \textit{edge of $\scrG$}.

Let $\scrG = ( V,  E)$ be a directed graph. If $W \subset V$ and $W \ne \emptyset$, then the pair $\scrG_W =(W, (W \times W) \cap E)$ is called a \textit{subgraph of $\scrG$}. Define $\tilde{ E} = \{ \{ u,v \} : (u,v) \in  E \text{ or } (v,u) \in  E \}$. We call $\tilde{\scrG} = ( V, \tilde{ E})$ the \textit{undirected graph associated with $\scrG$}. A finite sequence $\{ u_j \}_{j=1}^{n}$ in $ V, \quad n \in \Z_{\ge 2}$ is called an \textit{undirected path joining $u,v\in  V$} if $u_1 = u$, $u_n = v$ and $\{ u_j, u_{j+1} \} \in \tilde{ E}$ for each $j \in \Z_{\ge 1}$ with $j \le n-1$. We say \textit{$\scrG$ is connected} if for any two distinct vertices $u,v \in  V$ there is an undirected path joining $u,v \in  V$. A finite sequence $\{u_j\}_{j=1}^{n}$ of distinct vertices, with $n\in\Z_{\ge2}$, is called a \textit{circuit} if $(u_j,u_{j+1})\in E$ for $j=1,\ldots,n-1$ and
$(u_n,u_1)\in E$. If for any given vertex $u \in  V$, there is a unique $v \in  V$ such that $(v,u) \in  E$, we say \textit{$v$ is the parent of $u$} and denote it by $\pr (u)=v$. Then $\pr (\cdot)$ is a partial function from $ V$ to $ V$. We write $\pr^{\ip{n}}(u)$ for $\pr$ composed with itself $n$ times at $u$, whenever defined.
  
  A vertex $u\in  V$ is called a \textit{root of $\scrG$} if there is no vertex $v$ such that $(v,u) \in  E$. We denote the set of all roots by $\mathsf{Root} (\scrG)$. If $\mathsf{Root} (\scrG)$ is a singleton, we denote that single element as $\mathsf{root} (\scrG)$, or simply, $\mathsf{root}$ if there is no room for ambiguity. We define $ V ^{\circ} :=  V \setminus \mathsf{Root} (\scrG)$. For each $u \in  V$, we define the set $\Chi (u)$ given by
\begin{equation*}
    \Chi (u) = \{ v \in  V \, : \, (u, v) \in  E \}.
\end{equation*}
If $u,v \in  V$ and $v \in \Chi (u)$ then we say \textit{$v$ is a child of $u$}.

A directed graph $\scrT = ( V,  E)$ is called a \textit{directed tree} if 
\begin{enumerate}[noitemsep]
    \item[(i)] $\scrT$ is connected,
    \item[(ii)] $\scrT$ has no circuits, and 
    \item [(iii)] each vertex $v \in  V ^{\circ}$ has a parent.
\end{enumerate}

Let $\mathscr T = (V,E)$ be a directed tree, and let $W \subset V$ and $W \ne \emptyset$ such that the subgraph $\calT_{W} = (W, (W\times W) \cap E )$ is connected. We will call $\calT_W$ a \textit{subtree of $\scrT$}.

    We define $ V_{\prec} := \{ u \in  V : \lvert \Chi (u) \rvert \ge 2 \}$. If $v \in  V_{\prec}$ then we say $v$ is a \textit{branching vertex of $\scrT$}. We say $v \in  V$ is a \textit{leaf of $\scrT$} if $\Chi (v) = \emptyset$. The \textit{degree} of a vertex $v$, denoted $\deg v$, is defined by $\deg v = |\Chi (v)|$. If a directed tree does not contain any leaves, we call it \textit{leafless}. We define $V' := \{u \in V : \Chi(u) \ne \emptyset\}$. We say that $\scrT$ is \textit{locally finite} if $\Chi(u)$ is finite for every $u \in V$. For any subset $W \subset  V$, we define $\Chi (W) = \bigcup_{u \in W} \Chi (u)$. Define
    \begin{equation*}
        \begin{aligned}
            \Chi ^{\ip{0}} (W) &:= W, \\
            \Chi ^{\ip{n+1}} (W) &:= \Chi (\Chi ^{\ip{n}} (W)), \qquad n \in \Z_{\ge 0}, \\
            \mathsf{Des} (W) &:= \bigcup_{n \in \Z_{\ge 0}} \Chi^{\ip{n}} (W).
        \end{aligned}
    \end{equation*}
    If $w\in V$ and $n\in\Z_{\ge0}$, we simply write $\Chi^{\ip{n}}(w)$ for $\Chi^{\ip{n}}(\{w\})$ and $\mathsf{Des}(w)$ for $\mathsf{Des}(\{w\})$.

    Given any two vertices $u,v\in V$, we say that \textit{$u$ and $v$ are in the same generation} if $\pr^{\ip{n}}(u)=\pr^{\ip{n}}(v)$ for some $n\in\Z_{\ge 0}$ for which both sides are defined. Define a relation $\sim$ on $ V$ by $u\sim v$ if $u$ and $v$ are in the same generation. Then $\sim$ is an equivalence relation, and we call its equivalence classes the \textit{generations of $\scrT$}. For $u\in V$, we denote the generation containing $u$ by $\scrG_u$ and call it the \textit{generation of $u$}.

    Let $l \in \Z_{\ge 2}$. Following \cite[Example 5.1]{MR4019095}, we say that a directed tree $\scrT = (V,E)$ is a \textit{quasi-Brownian directed tree of valency $l$} if
    \begin{enumerate}
        \item[\rm (i)] there is some vertex $u_0 \in V$ such that $\deg u_0 = l$,
        \item[\rm (ii)] every vertex is of degree $1$ or $l$,
        \item[\rm (iii)] every child of a vertex $v$ of degree $1$ is of degree $1$,
        \item[\rm (iv)] for every vertex $u \in V$ with degree $l$, there is exactly one child of degree $l$ and the remaining $l-1$ child vertices are of degree $1$. 
    \end{enumerate}

    For each $l \in \Z_{\ge 2}$, there are exactly two quasi-Brownian trees of valency $l$ up to isomorphism, one of which is rooted and the other is rootless. In this paper, we primarily deal with the rootless quasi-Brownian tree of valency $2$.

\subsection{Weighted Shifts on Directed trees}
Let $\scrT = ( V,  E)$ be a directed tree. We denote by $\ell ^2 ( V)$ the Hilbert space of complex valued square summable functions on $ V$. The set $\{ e_u : u \in  V \}$ is an orthonormal basis for $\ell ^2 ( V)$ where $e_u :  V \to \C$ is given by
\begin{equation*}
    e_u (v) = \begin{dcases}
        1, & v=u \\
        0, & v \ne u.
    \end{dcases}
\end{equation*}

Given a system $\lambda = \{ \lambda_{u} \}_{u \in  V ^{\circ}}$ of complex numbers, we define the \textit{weighted shift operator $S_{\lambda}$ on $\scrT$ with weights $\lambda$} by 
\begin{equation*}
\begin{gathered}
        \scrD (S_{\lambda}) := \{ f\in \ell ^2 ( V) : \Lambda_{\scrT} f \in \ell ^2 ( V) \}, \\
    S_{\lambda} f = \Lambda_{\scrT} f, \qquad f \in \scrD (S_{\lambda})
\end{gathered}
\end{equation*}
where 
\begin{equation*}
    \left( \Lambda_\scrT f \right) (v) = \begin{dcases}
        \lambda_v \cdot f(\pr (v)), & v\in  V ^{\circ}, \\
        0 & v=\mathsf{root}.
    \end{dcases}
\end{equation*}

We call $S_{\lambda}$ a \textit{weighted shift on $\scrT$ with nonnegative weights} if $\lambda_v\geq 0$ for each $v\in V^{\circ}$, and a \textit{weighted shift on $\scrT$ with positive weights} if $\lambda_v>0$ for each $v\in V^{\circ}$. Unless specified otherwise, a weighted shift on $\scrT$ is understood to have complex weights. When there is a possibility of ambiguity, we explicitly use the term \textit{weighted shift on $\scrT$ with complex weights}.

In this article, we deal with the case where the weighted shifts are only bounded. In such a case, $\mathscr D(S_{\lambda }) = \ell ^2( V)$. Conversely, if $\mathscr D(S_\lambda)=\ell^2( V)$, then $S_\lambda$ is bounded. We recall that if $S_{\lambda}$ is bounded, then 
\begin{equation*}
    S_{\lambda}e_u = \sum_{v\in \Chi (u)} \lambda_v e_v, \qquad \norm{S_{\lambda} e_u}^{2} = \sum_{v\in \Chi (u)} \abs{\lambda_v}^{2}, \qquad u \in  V.
\end{equation*}
Conversely, if $\sup_{u \in  V} \norm{S_{\lambda} e_u}^{2} < \infty$ then $S_{\lambda}$ is bounded. See \cite[Proposition 3.1.3 and Proposition 3.1.8]{MR2919910} for proofs of these facts.

If $W \subset V$, we treat $\ell^2(W)$ as a closed linear subspace of $\ell^2(V)$ by identifying each $f\in\ell^2(W)$ with its extension to $V$ which vanishes on $V\setminus W$. If $S_{\lambda}$ is left-invertible, then we denote its Moore--Penrose inverse by $L_{\lambda}$, that is, $L_{\lambda} = (S_{\lambda}^{*} S_{\lambda})^{-1} S_{\lambda}^{*}$. A bounded weighted shift $S_{\lambda}$ is called \textit{balanced} if $\norm{S_{\lambda} e_u} = \norm{S_{\lambda} e_v}$ whenever $u$ and $v$ belong to the same generation.

Following \cite[Section 6.1]{MR2919910}, for $u\in V$ and $v\in\mathsf{Des}(u)$, we define
\begin{equation*}
\lambda_{u\mid v} = \begin{dcases}
1, & v=u,\\
\prod_{k=0}^{n-1} \lambda_{\pr^{\langle k\rangle}(v)}, & v\in\Chi^{\langle n\rangle}(u),\quad n\in \Z_{\ge 1}.
\end{dcases}
\end{equation*}

Then $\lambda_{u\mid v}=\lambda_{u\mid\pr(v)}\lambda_v$ for $v\in\mathsf{Des}(u)\setminus\{u\}$, and $\lambda_{\pr(v)\mid w}=\lambda_v\lambda_{v\mid w}$ for $v \in V^{\circ}, \; w\in\mathsf{Des}(v)$.

We state some lemmas that will be used in upcoming sections. We sketch a proof of those lemmas which we could not find in the literature.

\begin{lemma}[{\cite[Lemma 6.1.1]{MR2919910}}]
    \label{lem:JJS-powers}
Let $\scrT=( V, E)$ be a directed tree and let $S_\lambda\in\calB(\ell^2( V))$ be a weighted shift on $\scrT$. Then, for every $u\in V$ and $n\in\Z_{\geq0}$,
\begin{equation*}
    S_\lambda^n e_u = \sum_{v\in\Chi^{\langle n\rangle}(u)} \lambda_{u\mid v}e_v, \qquad \norm{S_\lambda^n e_u}^2 = \sum_{v\in\Chi^{\langle n\rangle}(u)} \abs{\lambda_{u\mid v}}^2.
    \end{equation*}
\end{lemma}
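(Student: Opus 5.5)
The plan is to argue by induction on $n$. The case $n=0$ is immediate: $\Chi^{\langle 0\rangle}(u)=\{u\}$ and $\lambda_{u\mid u}=1$.

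For the inductive step, assume $S_\lambda^n e_u=\sum_{v\in\Chi^{\langle n\rangle}(u)}\lambda_{u\mid v}e_v$. Since $S_\lambda$ is bounded, it is continuous, so we may apply it termwise to this possibly infinite but square-summable series. Using $S_\lambda e_v=\sum_{w\in\Chi(v)}\lambda_w e_w$, this gives
\begin{equation*}
S_\lambda^{n+1}e_u=\sum_{v\in\Chi^{\langle n\rangle}(u)}\sum_{w\in\Chi(v)}\lambda_{u\mid v}\lambda_w e_w .
\end{equation*}
Next I use the tree structure. The sets $\Chi(v)$ for distinct $v$ are pairwise disjoint, because each vertex of $V^\circ$ has a unique parent. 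Their union over $v\in\Chi^{\langle n\rangle}(u)$ is exactly $\Chi^{\langle n+1\rangle}(u)$. Each $w$ in that union has $v=\pr(w)$, and $\lambda_{u\mid\pr(w)}\lambda_w=\lambda_{u\mid w}$. The double sum therefore collapses to $\sum_{w\in\Chi^{\langle n+1\rangle}(u)}\lambda_{u\mid w}e_w$, which closes the induction.

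The norm identity then follows from Parseval, since the vectors $e_v$ are orthonormal and are distinct for distinct $v$. The only point needing care is that the rearrangement of the double sum into a single sum is legitimate. This holds because the terms are orthogonal with square-summable coefficients, so unconditional convergence in $\ell^2(V)$ applies.
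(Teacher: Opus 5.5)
Your induction is correct. The one point you assert without checking, square-summability of the coefficients used for the regrouping, holds because
$\sum_{w\in\Chi^{\langle n+1\rangle}(u)}\abs{\lambda_{u\mid w}}^2=\sum_{v\in\Chi^{\langle n\rangle}(u)}\abs{\lambda_{u\mid v}}^2\norm{S_\lambda e_v}^2\le\norm{S_\lambda}^2\norm{S_\lambda^n e_u}^2$.
The paper gives no proof of its own and simply quotes \cite[Lemma 6.1.1]{MR2919910}, so your argument works as a self-contained justification.

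If you want to avoid series manipulations altogether, use the definition of $S_\lambda$ coordinatewise. For $w\in V^{\circ}$ one has $(S_\lambda^{n+1}e_u)(w)=\lambda_w\,(S_\lambda^{n}e_u)(\pr(w))$, and the value at a root is $0$. Moreover, $\pr(w)\in\Chi^{\langle n\rangle}(u)$ exactly when $w\in\Chi^{\langle n+1\rangle}(u)$, which closes the induction directly.
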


\begin{lemma}
    \label{lem:compute-beta-m}
    Let $\scrT = ( V,  E)$ be a directed tree and let $S_{\lambda} \in \calB (\ell ^2 ( V))$ be a weighted shift on $\scrT$. Then
    \begin{equation*}
        \beta_{m} (S_{\lambda}) e_u = \left( \sum_{j=0}^{m} (-1)^{m-j} \binom{m}{j} \norm{S_{\lambda}^{j}e_u }^{2} \right) e_u, \qquad u \in V, m \in \N.
    \end{equation*}
    Consequently, $S_{\lambda}$ is a $m$-isometry, $m \in \N$ if and only if $\beta_{m} (S_{\lambda}) e_u = 0$ for each $u \in  V$.
\end{lemma}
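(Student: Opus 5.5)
The plan is to show that $S_\lambda^*S_\lambda$ and $S_\lambda^{*j}S_\lambda^j$ act diagonally on the basis $\{e_u\}_{u\in V}$, and then read off $\beta_m(S_\lambda)e_u$ from \cref{eq:beta-m}. The key computation is $S_\lambda^{*}e_v$. For $v\in V^\circ$ we have $S_\lambda^* e_v = \overline{\lambda_v}\, e_{\pr(v)}$, and $S_\lambda^* e_{\mathsf{root}} = 0$. This follows from the matrix coefficients
\begin{equation*}
\ip{S_\lambda^* e_v, e_w} = \ip{e_v, S_\lambda e_w} = \sum_{x\in\Chi(w)} \overline{\lambda_x}\,\ip{e_v,e_x},
\end{equation*}
which vanish unless $v\in\Chi(w)$, that is, unless $w=\pr(v)$.

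Next I would iterate this to get $S_\lambda^{*n} e_v = \overline{\lambda_{u\mid v}}\, e_u$ whenever $v\in\Chi^{\ip{n}}(u)$. The proof is by induction on $n$, using the product formula for $\lambda_{u\mid v}$ and $\lambda_{u\mid v}=\lambda_{u\mid\pr(v)}\lambda_v$. Here I use that each vertex other than a root has a unique parent, so that $\pr^{\ip{n}}(v)=u$.

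Combining this with \Cref{lem:JJS-powers} gives
\begin{equation*}
S_\lambda^{*j}S_\lambda^j e_u=\sum_{v\in\Chi^{\ip{j}}(u)}\lambda_{u\mid v}\,\overline{\lambda_{u\mid v}}\,e_u=\norm{S_\lambda^je_u}^2e_u.
\end{equation*}
The interchange of the (possibly infinite) sum with the bounded operator $S_\lambda^{*j}$ is legitimate because the series converges in $\ell^2(V)$. Summing with the coefficients $(-1)^{m-j}\binom mj$ then yields the stated formula for $\beta_m(S_\lambda)e_u$.

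For the consequence, note that $\beta_m(S_\lambda)$ is bounded. It therefore vanishes if and only if it vanishes on the orthonormal basis $\{e_u\}_{u\in V}$, which by the formula just proved is equivalent to the scalar condition at each $u$. No step is a genuine obstacle. The only points needing care are the root case (where $S_\lambda^*e_{\mathsf{root}}=0$, consistent with the formula) and the convergence justification.
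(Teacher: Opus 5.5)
Your proof is correct and follows the paper's strategy: establish $S_\lambda^{*j}S_\lambda^j e_u=\norm{S_\lambda^j e_u}^2 e_u$ via \Cref{lem:JJS-powers}, then sum with the binomial coefficients. The only difference is how you obtain this diagonality: you compute $S_\lambda^{*n}e_v=\overline{\lambda_{u\mid v}}\,e_u$ explicitly, whereas the paper never computes the adjoint and instead notes that $\ip{S_\lambda^j e_u, S_\lambda^j e_v}=0$ for $u\ne v$ because $\Chi^{\ip{j}}(u)\cap\Chi^{\ip{j}}(v)=\varnothing$.
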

\begin{proof}
 We have by \Cref{lem:JJS-powers} that for every $j\in \Z_{\ge 0}$ and $u\in V$,
\begin{equation*}
S_\lambda^j e_u = \sum_{v\in\Chi^{\langle j\rangle}(u)} \lambda_{u|v}e_v.
\end{equation*}

If $u, v \in  V$ satisfy $u\neq v$, then $\Chi^{\langle j\rangle}(u)$ and $\Chi^{\langle j\rangle}(v)$ are disjoint. Hence, $\langle S_\lambda^j e_u,S_\lambda^j e_v\rangle=0$ for each $j \in \Z_{\ge 1}$ and each $u, v \in  V$ satisfying $u \ne v$. Therefore $S_\lambda^{*j}S_\lambda^j e_u = \norm{S_\lambda^j e_u}^2e_u$ for every $j\in\Z_{\ge0}$ and $u\in V$.

It follows that
\begin{equation*}
\beta_m(S_\lambda)e_u = \left( \sum_{j=0}^{m}(-1)^{m-j}\binom{m}{j} \norm{S_\lambda^j e_u}^2 \right)e_u, \qquad m \in \Z_{\ge 0},\, u \in  V.
\end{equation*}
Since $\{e_u\}_{u\in V}$ is an orthonormal basis of $\ell^2(V)$,
 $S_{\lambda}$ is a $m$-isometry if and only if $\beta_{m} (S_{\lambda}) e_u = 0$ for each $u \in  V$.
\end{proof}

\begin{lemma}
    Let $\scrT = ( V,  E)$ be a rootless directed tree, and let $S_{\lambda}$ be a left-invertible weighted shift on $\ell ^2 ( V)$. Then we have the following for each $u\in  V$ and $n \in \Z_{\ge 0}$,
    \begin{equation*}
        L_{\lambda}^n e_u = \frac{ \overline{\lambda_{\pr^{\langle n\rangle}(u)\mid u}} }{ \prod_{k=1}^n \left\lVert S_\lambda e_{\pr^{\langle k\rangle}(u)} \right\rVert^2 } e_{\pr^{\langle n\rangle}(u)},
    \end{equation*}
    and,
    \begin{equation*}
        \left\lVert S_\lambda^n L_{\lambda}^n e_u \right\rVert ^2 = \frac{ \left|\lambda_{\pr^{\langle n\rangle}(u)\mid u}\right|^2}{ \left( \prod_{k=1}^n \left\lVert S_\lambda e_{\pr^{\langle k\rangle}(u)} \right\rVert^2 \right)^2 } \left\lVert S_\lambda^n e_{\pr^{\langle n\rangle}(u)} \right\rVert^2.
    \end{equation*}
    \label{lem:all-about-L-lambda}
\end{lemma}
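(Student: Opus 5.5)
The plan is to first compute $S_\lambda^* e_v$ and $(S_\lambda^*S_\lambda)^{-1}e_w$, and then iterate by induction on $n$. Since $\scrT$ is rootless, every $v\in V$ has a parent. From the formula for $\Lambda_\scrT$, the adjoint acts by
\[
S_\lambda^* e_v = \overline{\lambda_v}\, e_{\pr(v)}, \qquad v\in V.
\]
By the proof of \Cref{lem:compute-beta-m} (the case $j=1$), $S_\lambda^*S_\lambda e_w=\norm{S_\lambda e_w}^2 e_w$. Left-invertibility makes $S_\lambda^*S_\lambda$ invertible, so each $\norm{S_\lambda e_w}^2>0$ and
\[
(S_\lambda^*S_\lambda)^{-1}e_w=\norm{S_\lambda e_w}^{-2}e_w .
\]
Combining the two gives the case $n=1$:
\[
L_\lambda e_u=\frac{\overline{\lambda_u}}{\norm{S_\lambda e_{\pr(u)}}^2}\,e_{\pr(u)}.
\]
The case $n=0$ is trivial, since the empty product is $1$ and $\lambda_{u\mid u}=1$.

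For the inductive step, assume the formula holds for $n$ and apply $L_\lambda$ to $e_{\pr^{\langle n\rangle}(u)}$. This multiplies by
\[
\frac{\overline{\lambda_{\pr^{\langle n\rangle}(u)}}}{\norm{S_\lambda e_{\pr^{\langle n+1\rangle}(u)}}^2}
\]
and moves to $e_{\pr^{\langle n+1\rangle}(u)}$. The key identity is
\[
\lambda_{\pr^{\langle n+1\rangle}(u)\mid u}=\lambda_{\pr^{\langle n\rangle}(u)}\,\lambda_{\pr^{\langle n\rangle}(u)\mid u},
\]
which is the second recursion for $\lambda_{\cdot\mid\cdot}$ noted in the preliminaries, applied with $v=\pr^{\langle n\rangle}(u)$. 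It is equally immediate from the product definition, since $u\in\Chi^{\langle n+1\rangle}(\pr^{\langle n+1\rangle}(u))$. Taking conjugates and collecting the denominators yields the formula for $n+1$.

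For the norm identity, apply $S_\lambda^n$ to the scalar multiple of $e_{\pr^{\langle n\rangle}(u)}$ just obtained. Taking squared norms gives the modulus squared of the coefficient times $\norm{S_\lambda^n e_{\pr^{\langle n\rangle}(u)}}^2$, which is exactly the stated expression; \Cref{lem:JJS-powers} can be used to express that last factor explicitly if desired.

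There is no real obstacle here. The only points needing care are that rootlessness guarantees $\pr^{\langle k\rangle}(u)$ is defined for all $k$, and that the index bookkeeping in the recursion for $\lambda_{\pr^{\langle n\rangle}(u)\mid u}$ is done correctly.
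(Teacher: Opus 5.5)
Your proposal is correct and follows essentially the same route as the paper: compute $L_\lambda e_u = \overline{\lambda_u}\,\norm{S_\lambda e_{\pr(u)}}^{-2} e_{\pr(u)}$, induct on $n$ using $\lambda_{\pr^{\langle n+1\rangle}(u)\mid u}=\lambda_{\pr^{\langle n\rangle}(u)}\lambda_{\pr^{\langle n\rangle}(u)\mid u}$, and then apply $S_\lambda^n$ to the resulting scalar multiple of $e_{\pr^{\langle n\rangle}(u)}$. You also spell out the adjoint and diagonal-inverse computations that the paper leaves implicit.
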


\begin{proof}
For each $u \in V$, we have
\begin{equation*}
    L_\lambda e_u = \frac{\overline{\lambda_u}} {\|S_\lambda e_{\pr(u)}\|^2} e_{\pr(u)}
\end{equation*}
Consequently, the first identity follows by induction and the definition of
$\lambda_{\pr^{\langle n\rangle}(u)\mid u}$. The second identity follows from the first one and \Cref{lem:JJS-powers}.
\end{proof}

\section{Norm-increasing \texorpdfstring{$3$}{3}-concave weighted shifts on directed trees admit Wold-type decomposition}\label{sec:positive-theorem}

This section first develops several results needed to prove that every norm-increasing $3$-concave weighted shift on a rootless directed tree admits Wold-type decomposition. We begin with the following definition, whose motivation becomes clear from the proposition that follows.

\begin{definition}[Weak subsequence property]
    Let $T\in\mathcal B(\mathcal H)$ be a left-invertible operator and let $L$ be its Moore--Penrose inverse. We say that $T$ \textit{possesses the weak subsequence property} if there exists a total subset $\calF \subset \calH$ such that for every $x \in \calF$, $\{ T^n L^n x\}_{n \in \Z_{\ge 1}}$ has a weakly convergent subsequence.
\end{definition}

We remark that the weak subsequence property is equivalent to the existence of a total subset $\calF \subset \calH$ such that
\begin{equation*}
    \liminf_{n \to \infty} \norm{T^n L^n x} < \infty, \qquad x \in \calF.
\end{equation*}

The following proposition will help us to establish \Cref{thm:result-for-positive-weights}. A part of the proposition is essentially contained in the proof of \cite[Theorem 1]{MR936999} and is well known to experts.

    \begin{proposition}
    \label{prop:equivalences-of-WSP}
    Let $T\in\mathcal B(\mathcal H)$ be left-invertible, and let $L=(T^*T)^{-1}T^*$ be its Moore--Penrose inverse. If $T$ possesses the wandering subspace property, then it possesses the weak subsequence property. Moreover, if $T$ is analytic, then the converse holds. As a result, the following statements are equivalent whenever $T$ is analytic:
    \begin{enumerate}
        \item[\rm (i)] $T$ possesses the wandering subspace property.
        \item[\rm (ii)] $T$ possesses the weak subsequence property.
        \item[\rm (iii)] $T'$ is analytic.
    \end{enumerate}
    \end{proposition}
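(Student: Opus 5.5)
Throughout, write $P := TL = T(T^*T)^{-1}T^*$. This is the orthogonal projection onto $\operatorname{ran} T$, so $I - P$ is the projection onto $\ker T^*$. The key algebraic identity I would use is the telescoping decomposition
\begin{equation*}
    x = \sum_{k=0}^{n-1} T^k (I-P) L^k x + T^n L^n x, \qquad x \in \calH,\ n \in \Z_{\ge 1}.
\end{equation*}
It follows by induction from $T^kL^k x - T^{k+1}L^{k+1}x = T^k(I - TL)L^k x$. The first sum lies in $[\ker T^*]$ for every $n$. Consequently, $x \in [\ker T^*]$ if and only if $T^nL^nx$ is close to $[\ker T^*]$; more precisely, the component of $x$ in $[\ker T^*]^\perp$ equals that of $T^nL^nx$.

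\emph{(WSP $\Rightarrow$ weak subsequence property).} Assume $[\ker T^*] = \calH$. I would take $\calF = \bigcup_{k \ge 0} T^k(\ker T^*)$, which is total by hypothesis. For $x = T^k y$ with $y \in \ker T^*$, we have $LT = I$ and $L y = (T^*T)^{-1}T^*y = 0$. Hence $L^n x = L^{n-k} y = 0$ for $n > k$, so $T^nL^nx = 0$ eventually. In particular it has a weakly convergent subsequence.

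\emph{(Weak subsequence property $\Rightarrow$ WSP, for analytic $T$).} Let $z \perp [\ker T^*]$ and $x \in \calF$. Choose a subsequence with $T^{n_j}L^{n_j}x \rightharpoonup w$. By the identity, $\ip{x,z} = \ip{T^{n_j}L^{n_j}x, z}$ for all $j$, so $\ip{x,z} = \ip{w,z}$. The element $w$ lies in the weakly closed subspace $T^{N}(\calH)$ for each fixed $N$: indeed $T^{n_j}L^{n_j}x \in T^N(\calH)$ once $n_j \ge N$, and $T^N(\calH)$ is closed because $T$ is bounded below. Hence $w \in \calH_\infty(T) = \{0\}$, which gives $\ip{x,z} = 0$. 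Since $\calF$ is total, $z = 0$, and therefore $[\ker T^*] = \calH$.

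\emph{(Equivalences).} (i)$\Leftrightarrow$(ii) is now immediate for analytic $T$. (i)$\Leftrightarrow$(iii) is \Cref{prop:analytic-wsp-in-terms-of-cauchy-dual}(ii).

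The only delicate step is the last implication, specifically placing the weak limit $w$ inside $\calH_\infty(T)$. That argument relies on $T^N(\calH)$ being closed, hence weakly closed, which holds by left-invertibility, and on the weak limit of a tail lying in each such subspace. Verifying the telescoping identity and $L|_{\ker T^*} = 0$ is routine.
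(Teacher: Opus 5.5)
Your proposal is correct and follows essentially the same route as the paper: the same eventually-vanishing argument on $\bigcup_k T^k(\ker T^*)$ (the paper takes its linear span), the same telescoping identity $I-T^nL^n=\sum_{k=0}^{n-1}T^k(I-TL)L^k$, and the same observation that the weak limit lies in every closed subspace $T^N(\calH)$ and hence in $\calH_\infty(T)=\{0\}$. The only difference is cosmetic: you test against $z\perp[\ker T^*]_T$ instead of invoking weak closedness of $[\ker T^*]_T$ directly. You also explicitly justify, via left-invertibility, that $T^N(\calH)$ is closed, which the paper uses without comment.
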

    
    \begin{proof}
    Define $ E:=\mathcal H\ominus T\mathcal H = \ker T^*$. We first show that if $T$ possesses the wandering subspace property then it possesses the weak subsequence property. To this end, define
    \begin{equation*}
        \calF := \operatorname{span} \{ T^j  E : j \in \Z_{\ge 0} \}.
    \end{equation*}
    It is easy to see that for each $x \in \calF$, there exists some $N \in \Z_{\ge 1}$ such that  $T^n L^n x = 0$ for all $n \in \Z_{\ge N}$. Moreover, since $T$ possesses the wandering subspace property, we have that $\bigvee \calF = \calH$. This shows that $T$ possesses the weak subsequence property.
    
    Conversely, assume that $T$ is analytic and possesses the weak subsequence property. Let $\mathcal F\subset\mathcal H$ be a total subset such that for every $x \in \calF$, $\{ T^n L^n x\}_{n \in \Z_{\ge 1}}$ has a weakly convergent subsequence. We show that $T$ possesses the wandering subspace property. Let $P:=I-TL$ be the orthogonal projection onto $ E$. Hence, for every $n\in \Z_{\ge 1}$,
    \begin{equation*}
    I-T^nL^n=\sum_{k=0}^{n-1}T^kPL^k,
    \end{equation*}
    and therefore $(I-T^nL^n)x\in [ E]_T$ for every $x\in\mathcal H$. This follows by induction on $n \in \Z_{\ge 1}$. See \cite[Equations (2.1) and (2.2)]{zbMATH01572594}.
    
    First of all, we fix $x\in\calF$ and choose a subsequence $(n_j)$ such that $T^{n_j}L^{n_j}x\to y$ weakly for some $y \in \calH$. Let $k\in\Z_{\ge1}$. For every $j\in\Z_{\ge k}$, we have $n_j\ge j\ge k$, and hence $T^{n_j}L^{n_j}x\in T^k\mathcal H$. Since $T^k\mathcal H$ is closed, and hence weakly closed, it follows that
    \begin{equation*}
    y\in\bigcap_{k\in \Z_{\ge 1}}T^k \mathcal H=\{0\},
    \end{equation*}
    where the last equality follows from the analyticity of $T$. Thus $T^{n_j}L^{n_j}x\to 0$ weakly, and consequently $(I-T^{n_j}L^{n_j})x\to x$ weakly.
    
    Since $[ E]_T$ is weakly closed, we have $x\in[ E]_T$. Hence, $\mathcal F\subseteq[ E]_T$, and it follows that $\mathcal H=[ E]_T$ as $\calF$ is a total set of $\calH$. Therefore, $T$ possesses the wandering subspace property.

    The equivalence of \rm{(i)} and \rm{(iii)} is precisely item \rm{(ii)} of \Cref{prop:analytic-wsp-in-terms-of-cauchy-dual}.
    \end{proof}

    \begin{corollary}
        Let $m \in \Z_{\ge 2}$. If $T$ is a norm-increasing $m$-concave operator possessing the wandering subspace property, then $T$ is analytic. Consequently, the following are equivalent for a norm-increasing $m$-concave operator:
        \begin{enumerate}
        \item[\rm (i)] $T$ possesses the wandering subspace property.
        \item[\rm (ii)] $T$ is analytic and possesses the weak subsequence property.
        \item[\rm (iii)] $T'$ is analytic.
    \end{enumerate}
    \end{corollary}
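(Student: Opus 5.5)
The plan is to show that the wandering subspace property forces $\calH_\infty(T)=\{0\}$, using \Cref{prop:Shimorin-hyper-range} to control the hyper-range. We then obtain the three equivalences by combining this first claim with \Cref{prop:equivalences-of-WSP}.

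\emph{First claim.} Let $T$ be norm-increasing and $m$-concave with $m\in\Z_{\ge 2}$, and assume $[\ker T^*]_T=\calH$. By \Cref{prop:Shimorin-hyper-range}, the subspace $\calM:=\calH_\infty(T)$ reduces $T$, and $T|_{\calM}$ is unitary.

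Since $\calM$ reduces $T$, the complement $\calM^\perp$ is invariant under both $T$ and $T^*$. Next we locate $\ker T^*$. Take $x\in\ker T^*$ and write $x=x_1+x_2$ with $x_1\in\calM$ and $x_2\in\calM^\perp$. Then $T^*x_1\in\calM$, $T^*x_2\in\calM^\perp$, and $T^*x_1+T^*x_2=0$. Hence $T^*x_1=0$. Because $T|_\calM$ is unitary, $(T|_\calM)^*=T^*|_\calM$ is injective on $\calM$, so $x_1=0$. This gives $\ker T^*\subset\calM^\perp$.

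As $\calM^\perp$ is closed and $T$-invariant, it follows that $[\ker T^*]_T\subset\calM^\perp$. The wandering subspace property then yields $\calM^\perp=\calH$, so $\calM=\{0\}$. Thus $T$ is analytic.

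\emph{The equivalences.} For (i)$\Rightarrow$(ii): by the first claim $T$ is analytic. Moreover $T$ is left-invertible, since it is norm-increasing. So the first part of \Cref{prop:equivalences-of-WSP} gives the weak subsequence property. For (ii)$\Rightarrow$(i): $T$ is analytic, so the converse part of \Cref{prop:equivalences-of-WSP} applies directly. Finally, (i)$\Leftrightarrow$(iii) is item (ii) of \Cref{prop:analytic-wsp-in-terms-of-cauchy-dual}, which is valid because $T$ is left-invertible.

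No step presents a real obstacle. The only point needing care is showing $\ker T^*\perp\calH_\infty(T)$, and this follows from the reducing property together with the injectivity of the adjoint of the unitary part.
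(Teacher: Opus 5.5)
Your proof is correct, but it reaches the first claim by a different route from the paper. The paper goes through the Cauchy dual. Since $\calH_\infty(T)$ reduces $T$, \Cref{prop:hyperange-T-subset-hyperange-T'} gives $\calH_\infty(T)\subset\calH_\infty(T')$. The wandering subspace property of $T$ means $T'$ is analytic by \Cref{prop:analytic-wsp-in-terms-of-cauchy-dual}, so $\calH_\infty(T)=\{0\}$.

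You instead show directly that $[\ker T^*]_T\subset\calH_\infty(T)^\perp$. This uses only the reducing part of \Cref{prop:Shimorin-hyper-range} and no Cauchy-dual results. So your argument shows more generally that for any bounded $T$ whose hyper-range reduces $T$, the wandering subspace property forces analyticity.

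Your argument can also be shortened. Since $\ker T^*=(T\calH)^\perp$ and $\calH_\infty(T)\subset T\calH$, the relation $\ker T^*\perp\calH_\infty(T)$ holds for every operator. So neither the decomposition $x=x_1+x_2$ nor the unitarity of $T|_{\calM}$ is needed; the only real input is that $\calM^\perp$ is $T$-invariant.

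The paper's route is shorter given Shimorin's cited results and fits the Cauchy-dual formulation of item (iii); yours is self-contained and more elementary. Your derivation of the three equivalences is the same as the paper's.
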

    \begin{proof}
        Let $T$ be a norm-increasing $m$-concave operator possessing the wandering subspace property. Then by \Cref{prop:Shimorin-hyper-range}, we have that $\calH_{\infty} (T)$ is reducing for $T$. It follows by \Cref{prop:hyperange-T-subset-hyperange-T'} that $\calH_{\infty} (T) \subset \calH_{\infty} (T')$. Since $T$ possesses the wandering subspace property, we have that $T'$ is analytic, that is $\calH_{\infty} (T') = \{ 0 \}$. Consequently, $\calH_{\infty} (T) = \{ 0\}$ and hence, $T$ is analytic. 

        The proof of the equivalences is immediate from \Cref{prop:equivalences-of-WSP}.
    \end{proof}

The proof of the following proposition is essentially the same as the argument used in \cite[Corollary 4.5]{MR5052237}. We provide an argument nonetheless for the sake of completeness.

\begin{proposition}
    Let $\scrT = ( V,  E)$ be a rootless directed tree, and let $S_{\lambda} \in \calB (\ell ^2 ( V))$ be a norm-increasing $3$-concave weighted shift with positive weights. Then $S_{\lambda}$ is either analytic or an isometry.
    \label{prop:analytic-or-isometry}
\end{proposition}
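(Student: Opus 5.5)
The plan is to assume that $S_\lambda$ is not analytic, so that $\calH_\infty(S_\lambda)\neq\{0\}$, and to show that $\beta_1(S_\lambda)=0$. Since $S_\lambda$ is norm-increasing and $3$-concave, \Cref{prop:Shimorin-hyper-range} applies: $\calH_\infty:=\calH_\infty(S_\lambda)$ reduces $S_\lambda$ and $S_\lambda|_{\calH_\infty}$ is unitary.

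All the operators $\beta_j(S_\lambda)$ are diagonal in the basis $\{e_u\}$ by \Cref{lem:compute-beta-m}. Write
\begin{equation*}
\beta_1(S_\lambda)e_u=c(u)e_u, \qquad \beta_2(S_\lambda)e_u=b(u)e_u .
\end{equation*}
Norm-increasingness gives $c\ge 0$, and \Cref{lem:m-concave} with $m=3$ gives $b\ge0$. The recursion \eqref{eq:beta-m-recursion}, evaluated on $e_u$, gives two identities:
\begin{equation*}
b(u)=\sum_{w\in\Chi(u)}\lambda_w^2c(w)-c(u), \qquad \sum_{w\in\Chi(u)}\lambda_w^2 b(w)-b(u)\le 0,
\end{equation*}
the second being exactly $\beta_3(S_\lambda)\le0$.

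\textbf{Step 1 (support of $\calH_\infty$).} Since $\calH_\infty$ reduces $S_\lambda$ and the restriction is unitary, $S_\lambda^*S_\lambda f=f$ and $S_\lambda^{*2}S_\lambda^2 f=f$ for every $f\in\calH_\infty$. Because these operators are diagonal, any vertex $v$ with $f(v)\neq0$ satisfies $\norm{S_\lambda e_v}=\norm{S_\lambda^2e_v}=1$. Hence $c(v)=0$ and $b(v)=1-2+1=0$.

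\textbf{Step 2 (propagate to ancestors).} Fix $0\ne f\in\calH_\infty$ and a vertex $v$ with $f(v)\ne0$. Unitarity of the restriction gives, for each $n$, some $g_n\in\calH_\infty$ with $f=S_\lambda^n g_n$. By \Cref{lem:JJS-powers},
\begin{equation*}
f(v)=\lambda_{\pr^{\ip{n}}(v)\mid v}\, g_n(\pr^{\ip{n}}(v)),
\end{equation*}
where $\pr^{\ip{n}}(v)$ is defined because the tree is rootless. So $g_n(\pr^{\ip{n}}(v))\ne0$, and Step 1 applied to $g_n$ yields $c=b=0$ at every ancestor $\pr^{\ip{n}}(v)$. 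I expect this to be the main point needing care: using unitarity to produce preimages inside $\calH_\infty$, rather than trying to apply $S_\lambda^{*n}$ to $f$, where contributions from different vertices of a generation could cancel.

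\textbf{Step 3 (propagate to descendants).} If $b(x)=0$, the $3$-concavity inequality together with $b\ge0$ and positive weights forces $b(w)=0$ for all $w\in\Chi(x)$. Inductively $b\equiv0$ on $\mathsf{Des}(\pr^{\ip{n}}(v))$. On that set the identity $b=0$ reads
\begin{equation*}
\sum_{w\in\Chi(x)}\lambda_w^2c(w)=c(x).
\end{equation*}
Starting from $c(\pr^{\ip{n}}(v))=0$ and using $c\ge0$ with positive weights, we get $c(w)=0$ for all children. By induction $c\equiv0$ on $\mathsf{Des}(\pr^{\ip{n}}(v))$.

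\textbf{Conclusion.} Since $\scrT$ is a connected rootless tree, every vertex has a common ancestor with $v$. Hence
\begin{equation*}
V=\bigcup_{n\in\Z_{\ge 0}}\mathsf{Des}(\pr^{\ip{n}}(v)),
\end{equation*}
so $c\equiv0$ on $V$. That is, $S_\lambda^*S_\lambda=I$, and $S_\lambda$ is an isometry.
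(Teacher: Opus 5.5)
Your proof is correct, and it takes a genuinely different route from the paper's.

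The paper first obtains from \Cref{prop:Shimorin-hyper-range} that $\calH_\infty(S_\lambda)$ reduces $S_\lambda$. It then invokes an external theorem of Chavan and Trivedi to conclude that $S_\lambda$ is balanced. Balancedness makes the moments $\norm{S_\lambda^k e_{v_n}}^2$ along a bi-infinite path $(v_n)_{n\in\Z}$ equal to those of a bilateral weighted shift $S_\gamma$ with weights $\gamma_n=\norm{S_\lambda e_{v_n}}$. This $S_\gamma$ is therefore norm-increasing, $3$-concave and invertible, hence unitary by \Cref{prop:Shimorin-hyper-range} once more, which forces $\gamma_n=1$.

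You bypass balancedness altogether, in three moves:
\begin{itemize}
\item the unitary restriction kills $\beta_1$ and $\beta_2$ on the support of every element of $\calH_\infty$;
\item that support is closed under $\pr$ (your Step 2, where choosing the preimages $g_n$ inside $\calH_\infty$ is exactly the right move and avoids any cancellation issue);
\item $\beta_3\le 0$, together with $\beta_1,\beta_2\ge 0$ and positive weights, pushes the vanishing down to all descendants of the ancestors of $v$, and these exhaust $V$.
\end{itemize}

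Your approach is self-contained: it uses only \Cref{prop:Shimorin-hyper-range} and \Cref{lem:m-concave}, not the balancedness theorem. It also shows transparently where $3$-concavity and positivity of the weights enter. The paper's approach is shorter given the citation, and it uses $3$-concavity only through \Cref{prop:Shimorin-hyper-range}, so its reduction to $S_\gamma$ is not tied to $m=3$. Your downward propagation, by contrast, relies on the sign pattern $\beta_1,\beta_2\ge 0\ge\beta_3$, which is specific to norm-increasing $3$-concave operators.
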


\begin{proof}
    If $S_\lambda$ were analytic, there would be nothing to prove. Hence, assume that $S_\lambda$ is not analytic. Since $S_\lambda$ is norm-increasing, it is left-invertible. We have that $\calH_\infty(S_\lambda)$ reduces $S_\lambda$ by \Cref{prop:Shimorin-hyper-range}. Consequently, $S_\lambda$ is balanced by \cite[Theorem 4.1]{MR5052237}.
    
    Since $S_\lambda$ is injective, $\scrT$ is leafless (see \cite[Proposition 3.1.7]{MR2919910}). Fix $v\in V$. Since $\scrT$ is rootless and leafless, we may choose a sequence $(v_n)_{n\in\Z}$ of vertices such that $v_0=v$ and $v_{n+1}\in\Chi(v_n)$ for every $n\in\Z$. Define $\gamma_n:=\norm{S_\lambda e_{v_n}}$ for each $n\in\Z$, and let $S_\gamma$ denote the bilateral weighted shift on $\ell^2(\Z)$ defined by $S_\gamma e_n=\gamma_ne_{n+1}$ for each $n \in \Z$.

    Since $S_\lambda$ is balanced, it can be shown using induction and \Cref{lem:JJS-powers} that $\norm{S_\lambda^k e_u}=\norm{S_\lambda^k e_w}$ for every $k\in\Z_{\geq0}$ whenever $u$ and $w$ belong to the same generation.

    Consequently, we have
    \begin{equation*}
        \norm{S_\lambda ^k e_{v_n}}^2 =\prod_{j=0}^{k-1}\gamma_{n+j}^2 =\norm{S_\gamma^ke_n}^2, \qquad k\in\Z_{\geq0},\ n\in\Z.
    \end{equation*}
      It follows by \Cref{lem:compute-beta-m} that $S_\gamma$ is norm-increasing and $3$-concave.

      Since $1\leq\gamma_n\leq\norm{S_\lambda}$ for every $n\in\Z$, the operator $S_\gamma$ is invertible. We have that $S_\gamma$ is unitary by using \Cref{prop:Shimorin-hyper-range} again. Thus $\gamma_n=1$ for every $n\in\Z$. Since $v\in V$ was arbitrary, $\norm{S_\lambda e_v}=1$ for every $v\in V$, and using \Cref{lem:compute-beta-m} with $m=1$, we have that $S_\lambda$ is an isometry.
\end{proof}

We now state a lemma which will be useful to prove \Cref{thm:result-for-positive-weights}. 

\begin{lemma}
    Let $\left\{ a_n \right\}_{n \in \Z_{\ge 1}}$ be a nonnegative nondecreasing sequence which is not identically zero. Define $\sigma_n := \frac{1}{n} \sum_{k=1}^{n} a_k$ for each $n \in \Z_{\ge 1}$. Then we have
    \begin{equation*}
        \liminf_{n \to \infty} \frac{a_n}{\sigma_n ^{2}} < \infty.
    \end{equation*}
    \label{lem:Cesaro}
\end{lemma}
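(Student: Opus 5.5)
Since $\{a_n\}$ is nondecreasing and not identically zero, there is $N \in \Z_{\ge 1}$ with $a_N > 0$, and then $a_n \ge a_N > 0$ for all $n \ge N$. In particular $\sigma_n > 0$ for $n \ge N$, so the quotient $a_n/\sigma_n^2$ is well defined for large $n$. The proof will be by contradiction: suppose $a_n/\sigma_n^2 \to \infty$. Then there is $M \in \Z_{\ge N}$ with $a_n \ge \sigma_n^2$ for all $n \ge M$. Write this in terms of the partial sums $A_n := \sum_{k=1}^n a_k = n\sigma_n$. The condition becomes
\begin{equation*}
    A_n - A_{n-1} = a_n \ge \frac{A_n^2}{n^2}, \qquad n \ge M+1.
\end{equation*}

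The key step is the standard telescoping trick for such Riccati-type inequalities. Since $A_{n-1} \le A_n$ and both are positive for $n \ge N+1$,
\begin{equation*}
    \frac{1}{A_{n-1}} - \frac{1}{A_n} = \frac{A_n - A_{n-1}}{A_{n-1}A_n} \ge \frac{A_n - A_{n-1}}{A_n^2} \ge \frac{1}{n^2}.
\end{equation*}
This alone only gives a convergent sum, so we use the assumption $a_n/\sigma_n^2\to\infty$ in its full strength. For any fixed $C>0$, there is $M_C$ such that $a_n \ge C A_n^2/n^2$ for $n \ge M_C$. Summing the telescoped inequality from $n = M_C+1$ to $K$ gives
\begin{equation*}
    \frac{1}{A_{M_C}} \ge C\sum_{n=M_C+1}^{K} \frac{1}{n^2}.
\end{equation*}
This is not yet contradictory, since both sides can be small. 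So we also exploit monotonicity in a different way: $A_n \ge (n-M)\,a_M$ grows at least linearly, while $a_n \le$ something.

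A cleaner route is to use monotonicity directly: $A_n \ge \sum_{k=\lceil n/2\rceil}^n a_k \ge \tfrac{n}{2} a_{\lceil n/2\rceil}$. The hypothesis then yields
\begin{equation*}
    a_n \ge C\,\frac{A_n^2}{n^2} \ge \frac{C}{4}\, a_{\lceil n/2\rceil}^2.
\end{equation*}
Next we iterate along the dyadic scales $n = 2^j$. With $b_j := a_{2^j}$, this gives $b_{j+1} \ge \tfrac{C}{4} b_j^2$. Choosing $C$ large and then $j_0$ with $\tfrac{C}{4} b_{j_0} \ge 2$ forces doubly exponential growth $b_j \ge 2^{2^{j-j_0}}$. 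Only finiteness is given, so there is no a priori upper bound on $a_n$ from this alone.

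The main obstacle is exactly this missing contradiction. I would resolve it with the telescoping bound. From $\frac{1}{A_{n-1}}-\frac{1}{A_n}\ge \frac{C}{n^2}$, summing from $n=m+1$ to $\infty$ gives $A_m \le m/C$ for every large $m$, using $\sum_{n>m} n^{-2} \ge 1/(m+1)$. Hence $\sigma_m \le 2/C$ for large $m$. But $\sigma_m \ge \tfrac12 a_{\lceil m/2\rceil} \ge \tfrac12 a_N > 0$ for large $m$. Choosing $C > 4/a_N$ then gives the contradiction. Therefore $a_n/\sigma_n^2 \not\to \infty$, which is exactly $\liminf_{n\to\infty} a_n/\sigma_n^2 < \infty$.
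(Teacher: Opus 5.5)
Your final argument is correct. Like the paper's, it rests on telescoping reciprocals, but it is organised differently.

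The paper argues directly. It first shows that $\sigma_n$ is nondecreasing, writes $a_n=\sigma_n+(n-1)(\sigma_n-\sigma_{n-1})$, and gets $a_n/\sigma_n^2\le 1/\sigma_n+(n-1)c_n$ with $c_n=1/\sigma_{n-1}-1/\sigma_n\ge 0$. Summability of $c_n$ forces $\liminf_n nc_n=0$, which yields an explicit subsequence along which the ratio is bounded.

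You argue by contradiction and telescope the reciprocals of the partial sums $A_n=n\sigma_n$ instead. The inequality $a_n\ge CA_n^2/n^2$ gives $1/A_{n-1}-1/A_n\ge C/n^2$. The tail sum then gives $\sigma_m\le 2/C$ eventually, which collides with $\sigma_m\ge\tfrac12 a_N$.

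What each route buys:
\begin{itemize}
\item Your route needs neither the monotonicity of $\sigma_n$ nor the step from summability to $\liminf_n nc_n=0$.
\item It uses monotonicity of $a_n$ only to keep $\sigma_m$ away from $0$. So it proves the lemma whenever $a_n\ge 0$ and $\liminf_n\sigma_n>0$. Some such condition is needed: for $a_n=1/n$ the ratio tends to infinity.
\item The paper's route is constructive. However, Claim 3 only uses that the $\liminf$ is finite, so nothing is lost by arguing by contradiction.
\end{itemize}

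In a write-up, drop the two exploratory paragraphs.
\begin{itemize}
\item The dyadic iteration is never used.
\item The worry that ``both sides can be small'' is unfounded. Letting $K\to\infty$ in that very inequality already gives $\sigma_{M_C}\le 2/C$. This contradicts $\sigma_{M_C}\ge\tfrac12 a_N$ once $M_C\ge 2N$ and $C>4/a_N$.
\end{itemize}
Also fix a small slip: $\sum_{n>m}n^{-2}\ge 1/(m+1)$ gives $A_m\le (m+1)/C$, not $A_m\le m/C$. This still yields $\sigma_m\le 2/C$, so the conclusion is unaffected.
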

\begin{proof}
    First, we observe that since $\{ a_n \}_{n \in \Z_{\ge1}}$ is nondecreasing, $\{ \sigma_{n} \}_{n \in \Z_{\ge 1}}$ is also nondecreasing. Indeed, by monotonicity of $\{ a_n \}_{n \in \Z_{\ge1}}$, we have $\sigma_n \le a_{n+1}$ for each $n \in \Z_{\ge 1}$, and consequently,
    \begin{equation*}
        \begin{aligned}
            \sigma_{n+1} - \sigma_n = \frac{1}{n+1} \sum_{k=1}^{n+1} a_k - \sigma_n 
                                    = \frac{n \sigma_n + a_{n+1}}{n+1} - \sigma_n 
                                    = \frac{a_{n+1} - \sigma_n}{n+1} \ge 0,
        \end{aligned}
    \end{equation*}
    for all $n \in \Z_{\ge 1}$.

    Moreover, since $\left\{ a_n \right\}_{n \in \Z_{\ge 1}}$ is not identically zero we have that there is some $N \in \Z_{\ge 1}$ such that $\sigma_n > 0$ for each $n \in \Z_{\ge N}$.

    Now, observe that $a_n = n \sigma_n - (n-1) \sigma_{n-1} = \sigma_n + (n-1) (\sigma_n - \sigma_{n-1})$ for each $n \in \Z_{\ge 2}$. Using the fact that $\sigma_{n-1} \le \sigma_{n}$ for each $n \in \Z_{\ge 2}$, it follows that for each $n \in \Z_{\ge N+1}$,
    \begin{equation}
        \begin{aligned}
        \frac{a_n}{\sigma_n ^2} &= \frac{\sigma_n + (n-1) (\sigma_n - \sigma_{n-1})}{\sigma_n ^2} \\
        &= \frac{1}{\sigma_n} + \frac{(n-1) (\sigma_n - \sigma_{n-1})}{\sigma_{n} ^2} \\
        &\le \frac{1}{\sigma_n} + (n-1) \left( \frac{1}{\sigma_{n-1}} - \frac{1}{\sigma_n} \right).
    \end{aligned}
    \label{eq:Cesaro-estimate}
    \end{equation}
    
    We define $c_n := \frac{1}{\sigma_{n-1}} - \frac{1}{\sigma_n}$ for each $n \in \Z_{\ge N+1}$. Observe that $c_n \ge 0$ for each $n \in \Z_{\ge N+1}$ and $\sum_{n = N+1}^{\infty} c_n < \infty$. Consequently, $\liminf_{n \to \infty} nc_n = 0$. Indeed, otherwise there would exist some $\varepsilon > 0$ and some $N_0 \in \Z_{\ge N+1}$ such that $c_n \ge \varepsilon/n$ for all $n \in \Z_{\ge N_0}$, but this would imply that $\sum_{n = N+1}^{\infty} c_n = \infty$.

    We now select a strictly increasing sequence $\{ n_j \}_{j \in \Z_{\ge 1}} \subset \Z_{\ge N+1}$ such that $n_j c_{n_j} \to 0$. From \eqref{eq:Cesaro-estimate}, we have that 
    \begin{equation*}
        \frac{a_{n_j}}{\sigma_{n_j} ^2} \le \frac{1}{\sigma_N} + (n_j - 1) c_{n_j}, \qquad j \in \Z_{\ge 1}.
    \end{equation*}
    Since the right hand side of the previous inequality is bounded, we have $\liminf_{n \to \infty} \frac{a_{n}}{\sigma_{n} ^2} < \infty$. This completes the proof of the lemma.
\end{proof}

\begin{theorem}
    Let $\scrT = ( V,  E)$ be a rootless directed tree, and let $S_{\lambda} \in \calB (\ell ^2 ( V))$ be an analytic, norm-increasing $3$-concave weighted shift with positive weights. Then $S_{\lambda}$ possesses the wandering subspace property.
    \label{thm:result-for-positive-weights}
\end{theorem}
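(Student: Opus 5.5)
By \Cref{prop:equivalences-of-WSP}, since $S_\lambda$ is analytic, it suffices to verify the weak subsequence property. We take the total set $\calF=\{e_u : u\in V\}$ and show that $\liminf_{n\to\infty}\norm{S_\lambda^nL_\lambda^n e_u}<\infty$ for each fixed $u$. Put $w_n:=\pr^{\ip{n}}(u)$; these exist because $\scrT$ is rootless. By \Cref{lem:all-about-L-lambda},
\begin{equation*}
\norm{S_\lambda^nL_\lambda^ne_u}^2=\frac{\lambda_{w_n\mid u}^2\,\norm{S_\lambda^ne_{w_n}}^2}{\prod_{k=1}^n\norm{S_\lambda e_{w_k}}^4}.
\end{equation*}
Since $u\in\Chi^{\ip{n}}(w_n)$, \Cref{lem:JJS-powers} gives $\lambda_{w_n\mid u}^2\le\norm{S_\lambda^ne_{w_n}}^2$. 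Moreover, with positive weights, $\lambda_{w_n\mid u}=\prod_{k=0}^{n-1}\lambda_{w_k}$, and $\lambda_{w_k}^2\le\norm{S_\lambda e_{w_{k+1}}}^2$. The strategy is to bound the numerator using \Cref{lem:m-concave} with $m=3$ and then control the resulting ratio through \Cref{lem:Cesaro}.

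Applying \Cref{lem:m-concave} at the vertex $w_n$ and using \Cref{lem:compute-beta-m}, we get
\begin{equation*}
\norm{S_\lambda^ne_{w_n}}^2\le 1+n b_1(w_n)+\binom n2 b_2(w_n),
\end{equation*}
where $b_1(w)=\norm{S_\lambda e_w}^2-1\ge0$ and $b_2(w)=\norm{S_\lambda^2e_w}^2-2\norm{S_\lambda e_w}^2+1\ge0$. We will also use $3$-concavity along the ancestral chain, $\beta_3\le0$, in the form $b_2(w_{k})\le b_2(w_{k+1})$ read downward: by \eqref{eq:beta-m-recursion}, $\beta_3=S^*\beta_2S-\beta_2\le0$, so the quantity $\ip{\beta_2 S_\lambda e_w,S_\lambda e_w}\le b_2(w)\norm{}$-type averages decrease toward descendants.

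The natural normalisation compares $\prod_{k=1}^n\norm{S_\lambda e_{w_k}}^2=\prod_k(1+b_1(w_k))$ with the quadratic numerator. Going up the tree, we set $a_k$ to be a suitable nondecreasing quantity built from the $b_2(w_k)$; $3$-concavity makes $b_2$ monotone in the ancestral direction, so that $b_1(w_k)$ grows at least like a partial sum of the $a_j$'s. Then $\prod_k(1+b_1(w_k))\ge 1+\sum_k b_1(w_k)\gtrsim n\sigma_n$, or at least $n^2\sigma_n$ after iterating. The ratio then reduces, up to constants, to $(1+n^2a_n)^2/(n^2\sigma_n)^2$, or more precisely to a quantity of the form $a_n/\sigma_n^2$, and \Cref{lem:Cesaro} gives a finite $\liminf$. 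If all $b_2(w_k)$ vanish, the numerator is linear in $n$, and the product bound $\prod_k(1+b_1(w_k))\ge 1+\sum b_1$, together with the nondecreasing $b_1(w_k)$ (by $\beta_2\le0$ along the chain), gives boundedness directly.

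The main obstacle is the bookkeeping in this last step. The proposal does not yet pin down the estimates needed: the exact sequence $a_n$ has not been chosen, the inequality $b_1(w_k)\gtrsim$ (partial sums) has not been derived from $3$-concavity, and nondecreasingness has not been checked. In particular, $\beta_3\le0$ gives for a weighted shift that $\sum_{v\in\Chi(w)}\lambda_v^2\,b_2(v)\le b_2(w)$, so $b_2$ is controlled by a \emph{weighted average} over children rather than along a single chain. I would first try to handle this by using $\lambda_{w_{k}}^2\le\norm{S_\lambda e_{w_{k+1}}}^2$ to absorb the weights, choosing $a_n:=b_2(w_{n})$ or its ancestral running maximum, and I am not confident this goes through without further work.
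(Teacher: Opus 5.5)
There is a genuine gap, and it is where you flag it. Your framework is the right one: the weak subsequence property on $\{e_u\}$, the formula from \Cref{lem:all-about-L-lambda}, the quadratic bound from \Cref{lem:m-concave}, and \Cref{lem:Cesaro} at the end. But the estimates that make the ratio bounded along a subsequence are missing, and the heuristics you sketch for them are not correct:
\begin{enumerate}
\item[(a)] $3$-concavity only gives $b_2(w_{k+1})\ge\lambda_{w_k}^2b_2(w_k)$, so $b_2(w_k)$ need not be monotone along the ancestral chain.
\item[(b)] When all $b_2(w_k)$ vanish, you only get $b_1(w_{k+1})\ge\lambda_{w_k}^2b_1(w_k)$, not that $b_1(w_k)$ is nondecreasing.
\item[(c)] $b_1(w_k)$ cannot grow like partial sums of anything, since $0\le b_1\le\norm{S_\lambda}^2-1$.
\item[(d)] Using $\lambda_{w_k}^2\le\norm{S_\lambda e_{w_{k+1}}}^2$ to absorb the weights discards exactly the sibling mass $\norm{S_\lambda e_{w_{k+1}}}^2-\lambda_{w_k}^2$, and that mass is what drives the argument.
\end{enumerate}

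The missing idea is to normalise by $\lambda_{w_k\mid u}^2$ and telescope exact identities. Evaluating $S_\lambda^*S_\lambda=I+\beta_1$, $S_\lambda^*\beta_1S_\lambda=\beta_1+\beta_2$ and $S_\lambda^*\beta_2S_\lambda\le\beta_2$ at $e_v$ gives
\begin{gather*}
1+b_1(v)=\sum_{w\in\Chi(v)}\lambda_w^2,\qquad b_1(v)+b_2(v)=\sum_{w\in\Chi(v)}\lambda_w^2\,b_1(w),\\
b_2(v)\ge\sum_{w\in\Chi(v)}\lambda_w^2\,b_2(w).
\end{gather*}
Keeping only the child $w_{k-1}$ in the third relation shows that $a_k:=b_2(w_k)/\lambda_{w_k\mid u}^2$ is nondecreasing. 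This sequence, not $b_2(w_k)$ or a running maximum, is the one to feed into \Cref{lem:Cesaro}. Set $D_n:=\lambda_{w_n\mid u}^{-2}\prod_{k=1}^n(1+b_1(w_k))$ and $H_n:=\sum_{k=1}^na_k$. Your bound then reads $\norm{S_\lambda^nL_\lambda^ne_u}^2\le\bigl(1+nb_1(w_n)+\binom n2b_2(w_n)\bigr)/(\lambda_{w_n\mid u}^2D_n^2)$.

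Let $s_k:=\sum_{v\in\Chi(w_k)\setminus\{w_{k-1}\}}\lambda_v^2$, and let $M$ be a common bound for $b_1$ and $b_2$. Divide the first two identities at $v=w_k$ by $\lambda_{w_k\mid u}^2$ and sum over $k\le n$. This yields $D_n\ge 1+\sum_{k\le n}s_k/\lambda_{w_k\mid u}^2=\lambda_{w_n\mid u}^{-2}+\sum_{k\le n}b_1(w_k)/\lambda_{w_k\mid u}^2$. It also yields $H_n\le M\bigl(1+\sum_{k\le n}s_k/\lambda_{w_k\mid u}^2\bigr)\le MD_n$.

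The three terms are then handled as follows.
\begin{enumerate}
\item[(1)] The constant term is at most $1/D_n\le 1$.
\item[(2)] The linear term is at most $Mn/D_n$, and this is bounded because $D_n\gtrsim n$ (or the term vanishes). If $(a_k)$ is not identically zero, then $a_k\ge\delta>0$ eventually, and $H_n\le MD_n$ gives $D_n\gtrsim n$. Otherwise the second identity makes $b_1(w_k)/\lambda_{w_k\mid u}^2$ nondecreasing, so either it is eventually $\ge\delta$ and $D_n\gtrsim n$, or the linear term vanishes.
\item[(3)] The quadratic term is at most $\tfrac{M^2}{2}\,a_n/(H_n/n)^2$, whose $\liminf$ is finite by \Cref{lem:Cesaro}.
\end{enumerate}
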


\begin{proof}
    We begin by defining $\beta_j (v) = \ip{\beta_j (S_{\lambda})e_v, e_v}$ for each $j \in \{1,2\}$ and each $v\in  V$. Since $S_{\lambda}$ is norm-increasing, we have $\beta_1 (S_{\lambda}) \ge 0$. We have by using \Cref{lem:m-concave} and $3$-concavity that $\beta_2 (S_{\lambda}) \ge 0$ and 
    \begin{equation}
        S_{\lambda}^{*n}S_{\lambda}^{n} \le I + n \beta_1 (S_{\lambda}) + \binom{n}{2} \beta_2 (S_{\lambda}), \qquad n \in \Z_{\ge 0}.
        \label{eq:shift-3-concave}
    \end{equation}
    As $\beta_j (S_\lambda) \ge 0$ for each $j \in \{1,2\}$ and $S_{\lambda}$ is bounded, we choose $M > 0$ such that $0 \le \beta_j (v) \le M$ for every $v \in  V$ and every $j \in \{1,2\}$.

    Now, we fix $u \in  V$ and define $u_n := \pr^{\ip{n}} (u)$ for each $n \in \Z_{\ge 0}$. In view of \Cref{prop:equivalences-of-WSP}, we show that $\{ S_{\lambda} ^{n} L_{\lambda} ^{n} e_u \}_{n \in \Z_{\ge 0}}$ has a bounded subsequence. To this end, we define for $n \in \Z_{\ge 0}$,
    \begin{equation*} 
    D_n := \frac{1}{\lambda_{u_n\mid u} ^{2}} \prod_{k=1}^{n}\bigl(1+\beta_1(u_k)\bigr)\quad \text{and} \quad
    H_n := \sum_{k=1}^{n} \frac{\beta_2(u_k)}{\lambda_{u_k\mid u} ^{2}}.
    \end{equation*}
    Using \Cref{lem:all-about-L-lambda} and equation \eqref{eq:shift-3-concave} to estimate $\norm{S_{\lambda} ^{n} L_{\lambda} ^{n} e_u}$ for $n \in \Z_{\ge 0}$, we have
    \begin{equation} 
           \norm{S_{\lambda} ^{n} L_{\lambda}^{n} e_u} ^2 \le \mathbf{I}_n + \mathbf{II}_n + \mathbf{III}_n, \qquad n \in \Z_{\ge 0},
           \label{eq:summands-of-S-L}
    \end{equation}
    where we define for each $n \in \Z_{\ge 0}$,
    \begin{equation*}
        \mathbf{I}_n  := \frac{1}{\lambda_{u_n \mid u} ^{2} D_{n}^{2}}, \quad \mathbf{II}_n := \frac{n \beta_1 (u_n)}{\lambda_{u_n \mid u} ^{2} D_{n}^{2}}, \quad \mathbf{III}_n := \frac{\binom{n}{2} \beta_2 (u_n)}{\lambda_{u_n \mid u} ^{2} D_{n}^{2}}.
    \end{equation*}

    In view of inequality \eqref{eq:summands-of-S-L}, it suffices for us to show that $\{ \mathbf{I}_n \}_{n\in\Z_{\ge 0}}$ and $\{ \mathbf{II}_n \}_{n\in\Z_{\ge 0}}$  are bounded and that $\{ \mathbf{III}_n \}_{n\in\Z_{\ge 0}}$  has a bounded subsequence. We make the following claims in order to establish these assertions.

    \textit{Claim 1.} The following inequalities hold for all $n \in \Z_{\ge 0}$,
    \begin{equation*}
        \begin{gathered}
            D_n \ge 1, \qquad 
            D_n \ge \frac{1}{\lambda_{u_n \mid u} ^{2}} + \sum_{k=1}^{n} \frac{\beta_1 (u_k)}{\lambda_{u_k \mid u} ^{2}}, \qquad H_n \le MD_n.
        \end{gathered}
    \end{equation*}
    Moreover, the sequence $\left\{ \frac{\beta_2 (u_n)}{\lambda_{u_n \mid u} ^2} \right\}_{n \in \Z_{\ge 0}}$ is nonnegative and nondecreasing. If the previous sequence is identically zero, then the sequence $\left\{ \frac{\beta_1 (u_n)}{\lambda_{u_n \mid u} ^2} \right\}_{n \in \Z_{\ge 0}}$ is nonnegative and nondecreasing.

    \begin{proof}[Proof of Claim 1.] The following equalities $S_{\lambda}^* S_{\lambda} =I+\beta_{1}(S_\lambda)$,  $S_{\lambda}^*\beta_1(S_{\lambda})S_{\lambda} = \beta_{1}(S_{\lambda})+\beta_{2}(S_{\lambda})$ and the inequality $S_{\lambda}^*\beta_{2}(S_{\lambda})S_{\lambda} = \beta_{2}(S_{\lambda})+\beta_{3}(S_{\lambda}) \le \beta_{2}(S_{\lambda})$
    together with \Cref{lem:JJS-powers,lem:compute-beta-m} imply that the following holds for each $v \in  V$,
    \begin{equation}
        \begin{aligned}
         1 +\beta_1 (v) &= \sum_{w \in \Chi (v)} \lambda_w^{2}, \\
        \beta_1(v) + \beta_2 (v) &= \sum_{w \in \Chi (v)} \lambda_w ^{2} \beta_1 (w),  \\
        \beta_2 (v) & \ge \sum_{w\in \Chi (v) } \lambda_w ^{2} \beta_2 (w).
        \end{aligned}
        \label{eq:some-identities}
    \end{equation}
   Since $u_{k-1} \in \Chi (u_k)$ for each $k \in \Z_{\ge 1}$, we have $\lambda_{u_k \mid u} = \lambda_{u_{k-1}} \lambda_{u_{k-1} \mid u}$ for each $k \in \Z_{\ge 1}$. Consequently, from \eqref{eq:some-identities}, we have for $k \in \Z_{\ge 1}$,
    \begin{equation}
        \left( \frac{1}{\lambda_{u_k \mid u}^{2}} - \frac{1}{\lambda_{u_{k-1} \mid u} ^{2}} \right) + \frac{\beta_1 (u_k)}{\lambda_{u_k \mid u}^{2}} = \frac{1}{\lambda_{u_k \mid u} ^{2}} \sum_{\substack{w \in \Chi (u_k) \\ w \ne u_{k-1}}} \lambda_w ^{2},
        \label{eq:claim-1-first-identity}
    \end{equation}
    \begin{equation}
        \left( \frac{\beta_1(u_k)} {\lambda_{u_k \mid u}^2} - \frac{\beta_1(u_{k-1})} {\lambda_{u_{k-1}\mid u}^2} \right) + \frac{\beta_2(u_k)} {\lambda_{u_k \mid u}^2} = \frac{1}{\lambda_{u_k\mid u}^2} \sum_{\substack{w \in \Chi (u_k) \\ w \ne u_{k-1}}} \lambda_w^2\beta_1(w),
        \label{eq:claim-1-second-identity}
    \end{equation}
    and, 
    \begin{equation}
        \frac{\beta_2(u_k)}{\lambda_{u_k\mid u}^2} \ge \frac{\beta_2(u_{k-1})} {\lambda_{u_{k-1}\mid u}^2} + \frac{1}{\lambda_{u_k\mid u}^2} \sum_{\substack{w \in \Chi (u_k) \\ w \ne u_{k-1}}} \lambda_w^2\beta_2(w).
        \label{eq:claim-1-third-identity}
    \end{equation}
    Now, observe that inequality \eqref{eq:claim-1-third-identity} implies that the sequence $\left \{ \frac{\beta_2 (u_n)}{\lambda_{u_n \mid u}^2} \right\} _{n \in \Z_{\ge 0}}$ is nonnegative and nondecreasing. If the aforementioned sequence is identically zero, then equation \eqref{eq:claim-1-second-identity} implies $\left \{ \frac{\beta_1 (u_n)}{\lambda_{u_n \mid u}^2} \right\} _{n \in \Z_{\ge 0}}$ is nonnegative and nondecreasing.

    We multiply both sides of \eqref{eq:claim-1-first-identity} by $\prod_{j=1}^{k-1}(1+\beta_1(u_j))$ to obtain, for each $k\in\Z_{\ge1}$,
    \begin{equation*}
        D_k-D_{k-1}
        =\frac{1}{\lambda_{u_k\mid u}^2}
        \left(\prod_{j=1}^{k-1}(1+\beta_1(u_j))\right)
        \sum_{\substack{w\in\Chi(u_k)\\w\ne u_{k-1}}}\lambda_w^2.
    \end{equation*}
    Since $\beta_1 (u_k) \ge 0$ for each $k \in \Z_{\ge 0}$, summing over $k=1,2, \ldots, n$ gives
    \begin{equation}
        D_n \ge 1 + \sum_{k=1}^n  \frac{1}{\lambda_{u_k \mid u} ^{2}} \sum_{\substack{w \in \Chi (u_k) \\ w \ne u_{k-1}}} \lambda_{w} ^{2}.
        \label{eq:D_n-lowerbound}
    \end{equation}
    This shows that $D_n \ge 1$ for each $n \in \Z_{\ge 0}$.

    Again, taking sums in \eqref{eq:claim-1-first-identity}, we obtain for each $n \in \Z_{\ge 0}$,
    \begin{equation*}
        \frac{1}{\lambda_{u_n\mid u}^2} + \sum_{k=1}^n \frac{\beta_1(u_k)}{\lambda_{u_k\mid u}^2} = 1+ \sum_{k=1}^n \frac{1}{\lambda_{u_k\mid u}^2} \sum_{\substack{w \in \Chi (u_k) \\ w \ne u_{k-1}}}\lambda_w^2.
    \end{equation*}
    From the previous equality and inequality \eqref{eq:D_n-lowerbound}, we have for each $n \in \Z_{\ge 0}$ that
    \begin{equation*}
        D_n \ge \frac{1}{\lambda_{u_n \mid u} ^{2}} + \sum_{k=1}^{n} \frac{\beta_1 (u_k)}{\lambda_{u_k \mid u} ^{2}}.
    \end{equation*}

    Taking sums in \eqref{eq:claim-1-second-identity}, we have for each $n \in \Z_{\ge 0}$,
    \begin{equation*}
        H_n = \beta_1(u) - \frac{\beta_1(u_n)}{\lambda_{u_n\mid u}^2}+ \sum_{k=1}^n \frac{1}{\lambda_{u_k\mid u}^2} \sum_{\substack{w \in \Chi(u_k) \\ w\ne u_{k-1}}} \lambda_w^2\beta_1(w).
    \end{equation*}

    Since $0 \le \beta_1 (v) \le M$, we have using \eqref{eq:D_n-lowerbound} that 
    \begin{equation*}
        H_n \le M \left( 1+ \sum_{k=1}^n \frac{1}{\lambda_{u_k\mid u}^2} \sum_{\substack{w \in \Chi(u_k) \\ w\ne u_{k-1}}}\lambda_w^2 \right) \le MD_n.
    \end{equation*}
    This completes the proof of Claim 1.
\end{proof}

    \textit{Claim 2.} The sequences $\left\{ \mathbf{I}_n \right\}_{n \in \Z_{\ge 0}}$ and $\left\{ \mathbf{II}_n \right\}_{n \in \Z_{\ge 0}}$ are bounded. 
    \begin{proof}[Proof of Claim 2.] Note that from Claim 1, we have $D_n \ge \frac{1}{\lambda_{u_n \mid u} ^{2}}$ and $D_n \ge 1$ for each $n \in \Z_{\ge 0}$. Consequently, we have
    \begin{equation*}
        \mathbf{I}_n = \frac{1}{\lambda_{u_n \mid u} ^{2} D_n ^{2}} \le \frac{1}{D_n} \le 1, \qquad n \in \Z_{\ge 0}.
    \end{equation*}
    This shows that the sequence $\left\{ \mathbf{I}_n \right\}_{n \in \Z_{\ge 0}}$ is bounded. We proceed to show that the sequence $\left\{ \mathbf{II}_n \right\}_{n \in \Z_{\ge 0}}$ is bounded.
    
    Since $\beta_1 (u_n) \le M$ for each $n \in \Z_{\ge 0}$, we again have by Claim 1 that 
    \begin{equation}
        \mathbf{II}_n = \frac{n \beta_1 (u_n)}{\lambda_{u_n \mid u}^2 D_n ^{2}} \le \frac{M n}{D_n}, \qquad n \in \Z_{\ge 0}.
        \label{eq:estimate-II_n}
    \end{equation}

    Suppose now that the sequence $\left\{ \frac{\beta_2 (u_n)}{\lambda_{u_n \mid u} ^2} \right\}_{n \in \Z_{\ge 0}}$ is not identically zero. By Claim 1, $\left\{ \frac{\beta_2 (u_n)}{\lambda_{u_n \mid u} ^2} \right\}_{n \in \Z_{\ge 0}}$ is nonnegative and nondecreasing. It follows that there is some $N \in \Z_{\ge 1}$ and some $\delta > 0$ such that $\frac{\beta_2 (u_k)}{\lambda_{u_k \mid u} ^2} \ge \delta$ for all $k \in \Z_{\ge N}$. Consequently, we have
    \begin{equation*}
        H_n = \sum_{k=1}^{n} \frac{\beta_2 (u_k)}{\lambda_{u_k \mid u} ^2} \ge (n-N+1)\delta, \qquad n \in \Z_{\ge N}.
    \end{equation*}
    Since $H_n \le M D_n$ for each $n \in \Z_{\ge 0}$ by Claim 1, we have
    \begin{equation*}
        D_n \ge \frac{\delta}{M} (n-N+1) \ge \frac{\delta}{2M}n, \qquad n \in \Z_{\ge 2N}.
    \end{equation*}
    It follows from \eqref{eq:estimate-II_n} that $\mathbf{II}_n \le \frac{2M^2}{\delta}$ for each $n \in \Z_{\ge 2N}$. Therefore, in the case where the sequence $\left\{ \frac{\beta_2 (u_n)}{\lambda_{u_n \mid u} ^2} \right\}_{n \in \Z_{\ge 0}}$ is not identically zero, the sequence $\left\{ \mathbf{II}_n \right\}_{n \in \Z_{\ge 0}}$ is bounded.

    We consider the case now where the sequence $\left\{ \frac{\beta_2 (u_n)}{\lambda_{u_n \mid u} ^2} \right\}_{n \in \Z_{\ge 0}}$ is identically zero. Then by Claim 1, $\left\{ \frac{\beta_1 (u_n)}{\lambda_{u_n \mid u} ^2} \right\}_{n \in \Z_{\ge 0}}$ is nonnegative and nondecreasing. If this sequence is identically zero, then $\mathbf{II}_n = 0$ for each $n \in \Z_{\ge 0}$ and we are done. Hence, we may assume without loss of generality that this sequence is not identically zero.

    Consequently,  there is some $N \in \Z_{\ge 1}$ and some $\delta> 0$ such that $\frac{\beta_1 (u_k)}{\lambda_{u_k \mid u}^2} \ge \delta$ for each $k \in \Z_{\ge N}$. It then follows from Claim 1 that 
    \begin{equation*}
        D_n \ge \sum_{k=1}^{n} \frac{\beta_1 (u_k)}{\lambda_{u_k \mid u}^2} \ge (n -N +1)\delta \ge \frac{\delta}{2} n, \qquad n \in \Z_{\ge 2N}.
    \end{equation*}
    Hence, we have from \eqref{eq:estimate-II_n} that $\mathbf{II}_n \le \frac{2M}{\delta}$ for each $n \in \Z_{\ge 2N}$. Thus,  the sequence $\left\{ \mathbf{II}_n \right\}_{n \in \Z_{\ge 0}}$ is bounded in this case as well.

    This completes the proof of Claim 2.
\end{proof}

\textit{Claim 3.} The sequence $\{ \mathbf{III}_n \}$ has a bounded subsequence.
\begin{proof}[Proof of Claim 3.]
    We have by Claim 1 that $\left\{ \frac{\beta_2 (u_n)}{\lambda_{u_n \mid u}^2} \right\}_{n \in \Z_{\ge 0}}$ is nonnegative and nondecreasing. If this sequence is identically zero, then $\mathbf{III}_n = 0$ for each $n \in \Z_{\ge 0}$ and there is nothing to prove. So, we may assume $\left\{ \frac{\beta_2 (u_n)}{\lambda_{u_n \mid u}^2} \right\}_{n \in \Z_{\ge 0}}$ is not identically zero.

    Recall that $H_n = \sum_{k=1}^{n} \frac{\beta_2 (u_k)}{\lambda_{u_k \mid u}^2}$ for each $n \in \Z_{\ge 0}$. It follows from \Cref{lem:Cesaro} that there is a strictly increasing sequence $\{n_j\}_{j\in\Z_{\ge1}} \subset \Z_{\ge 1}$ and some $C > 0$ such that
    \begin{equation*}
        \frac{n_j ^2 \beta_{2} (u_{n_j})}{\lambda_{u_{n_j} \mid u }^2 H_{n_j}^2} \le C, \qquad j \in \Z_{\ge 1}.
    \end{equation*}

    By Claim 1, we also have that $H_n \le M D_n$ for each $n \in \Z_{\ge 0}$ and consequently, for each $j \in \Z_{\ge 1}$,
    \begin{align*}
        \mathbf{III}_{n_j} &= \binom{n_j}{2}\frac{\beta_2(u_{n_j})}{\lambda_{u_{n_j}\mid u}^2D_{n_j}^2} \leq M^2\binom{n_j}{2}\frac{\beta_2(u_{n_j})}{\lambda_{u_{n_j}\mid u}^2H_{n_j}^2} \\
        &\qquad \le \frac{M^2}{2}\frac{n_j^2\beta_2(u_{n_j})}{\lambda_{u_{n_j}\mid u}^2H_{n_j}^2} \leq \frac{M^2C}{2}.
    \end{align*}
    Hence, $\left\{ \mathbf{III}_{n_j} \right\}$ is bounded. This completes the proof of Claim 3.
\end{proof}

We can now finish the proof. It follows by inequality \eqref{eq:summands-of-S-L} and Claims 2 and 3 that $\left\{ S_\lambda ^n L_{\lambda} ^{n} e_u \right\}_{n \in \Z_{\ge 0}}$ has a bounded subsequence. Since $\{ e_u : u \in  V \}$ is a total set for $\ell ^2 ( V)$, the proof is complete by \Cref{prop:equivalences-of-WSP}.
\end{proof}

\begin{proposition}
    Let $\scrT = ( V,  E)$ be a directed tree, and let $S_{\lambda} \in \calB (\ell ^2 ( V))$ be a norm-increasing $3$-concave weighted shift with positive weights. Then $S_{\lambda}$ admits Wold-type decomposition.
    \label{prop:wold-type}
\end{proposition}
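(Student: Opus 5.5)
The proof splits according to whether $\scrT$ has a root. If $\scrT$ is rooted, there is nothing to do beyond quoting \cite[Proposition 1.3.4]{MR3740250}: every bounded weighted shift on a rooted directed tree admits Wold-type decomposition. So assume from now on that $\scrT$ is rootless.

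In the rootless case, \Cref{prop:analytic-or-isometry} says that $S_\lambda$ is either an isometry or analytic.

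If $S_\lambda$ is an isometry, the classical Wold decomposition theorem applies directly and gives $\ell^2(V) = \calH_\infty(S_\lambda) \oplus [\ker S_\lambda^*]$, with $\calH_\infty(S_\lambda)$ reducing and $S_\lambda|_{\calH_\infty(S_\lambda)}$ unitary. This is exactly a Wold-type decomposition.

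If $S_\lambda$ is analytic, then \Cref{thm:result-for-positive-weights} shows that $S_\lambda$ possesses the wandering subspace property, so $[\ker S_\lambda^*] = \ell^2(V)$. Analyticity gives $\calH_\infty(S_\lambda)=\{0\}$. The zero subspace is trivially reducing, and the restriction of $S_\lambda$ to it is trivially unitary. Hence $\ell^2(V) = \{0\}\oplus[\ker S_\lambda^*]$, which is the required decomposition.

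I do not expect a real obstacle here, since all the substantive work is already in \Cref{thm:result-for-positive-weights} and \Cref{prop:analytic-or-isometry}. The only points to check are formal. First, the rooted case really is covered by the cited result without further hypotheses, which holds because boundedness is all it needs. Second, the degenerate decomposition in the analytic case fits Shimorin's definition of Wold-type decomposition.
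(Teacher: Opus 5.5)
Your proposal is correct and matches the paper's proof essentially step for step. The rooted case is handled by the cited result. In the rootless case, \Cref{prop:analytic-or-isometry} reduces the problem either to the classical Wold decomposition (isometric case) or to \Cref{thm:result-for-positive-weights} (analytic case), where the wandering subspace property together with $\calH_\infty(S_\lambda)=\{0\}$ gives the Wold-type decomposition.
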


\begin{proof}
    If $\scrT = ( V,  E)$ is rooted, we are done in view of \cite[Proposition 1.3.4]{MR3740250}. So we assume without any loss of generality that $\scrT$ is rootless. 
    
    If $S_{\lambda}$ is an isometry, then there is nothing to prove, in view of the classical Wold decomposition; see \cite[Theorem 1.1]{MR2760647}. So, we may assume that $S_{\lambda}$ is not an isometry. By \Cref{prop:analytic-or-isometry}, we have that $S_{\lambda}$ must be analytic. It follows that $S_{\lambda}$ possesses the wandering subspace property by \Cref{thm:result-for-positive-weights}. Hence, $S_{\lambda}$ admits Wold-type decomposition. This completes the proof. 
\end{proof}

The following proposition shows that every weighted shift on a directed tree can be decomposed as an orthogonal direct sum of weighted shifts on directed trees with nonzero weights. This can be viewed as a generalisation of \cite[Proposition 3.1.6]{MR2919910} and can be deduced from \cite[Proposition 3.2 and Lemma 3.5]{zbMATH07570541}. We omit the proof, noting that the result follows immediately and can also be obtained without appealing to the aforementioned reference. Recall that if $W \subset V$, we treat $\ell^2(W)$ as a closed linear subspace of $\ell^2(V)$ by identifying each $f\in\ell^2(W)$ with its extension to $V$ which vanishes on $V\setminus W$.

\begin{proposition}
    \label{prop:tree-decomposition}
    Let $\scrT = ( V,  E)$ be a directed tree, and let $S_{\lambda} \in \calB (\ell ^2 (V))$ be a weighted shift with weights $\lambda = \{ \lambda_v \}_{v \in V^{\circ}}$. Define $Z_{\lambda} := \{ v \in  V^{\circ} \, : \, \lambda_{v} = 0 \}$ and $ E_{\lambda} :=  E \setminus \{ (\pr (v), v) : v \in Z_{\lambda} \}$. Let $\{  V_{\alpha} \}_{\alpha \in I}$ be the vertex sets of the connected components of the underlying undirected graph of $( V,  E_{\lambda})$. For each $\alpha \in I$, we define the graph $\scrT_{\alpha} = ( V_{\alpha},  E_{\alpha})$ whose edge set is defined by $ E_{\alpha} :=  E_{\lambda} \cap ( V_{\alpha} \times V_{\alpha})$. Define the system of weights $\lambda ^{(\alpha )} = \{ \lambda ^{(\alpha )}_{v} \}_{v \in  V_{\alpha} ^{\circ}}$ on $\scrT_{\alpha}$ for each $\alpha \in I$ by
    \begin{equation*}
        \lambda_{v}^{(\alpha)} := \lambda_{v}, \qquad \alpha \in I, \; v \in  V_{\alpha}^{\circ}.
    \end{equation*}
    Then the following statements hold:
    \begin{enumerate}
        \item[(i)] $\scrT_{\alpha}$ is a directed tree for each $\alpha \in I$,
        \item[(ii)]  $\lambda_{v}^{(\alpha)} \ne 0$ for each $\alpha \in I$ and every $v \in  V_{\alpha}^{\circ}$,
        \item[(iii)] The subspace $\ell ^2 ( V_{\alpha})$ reduces $S_{\lambda}$ and $S_{\lambda} \mid _{\ell ^2 ( V _{\alpha})} =  S_{\lambda ^{ (\alpha)}}$.
        \item [(iv)] $\ell ^2 ( V ) = \bigoplus _{\alpha \in I} \ell ^2 (V_{\alpha})$ and $S_{\lambda} = \bigoplus_{\alpha \in I} S_{\lambda^{(\alpha)}}$.
    \end{enumerate}
\end{proposition}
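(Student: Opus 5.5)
The plan is a direct verification in four steps: first that removing the zero-weight edges leaves a forest, then that each tree carries the same shift, then that each $\ell^2(V_\alpha)$ reduces $S_\lambda$, and finally that the pieces exhaust $\ell^2(V)$.

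\emph{Step 1: each $\scrT_\alpha$ is a directed tree.} Since $V_\alpha$ is the vertex set of a connected component of the undirected graph of $(V,E_\lambda)$, the graph $\scrT_\alpha$ is connected. It has no circuits because $E_\alpha\subset E$ and $\scrT$ has none. For the parent condition, take $v\in V_\alpha$ which is not a root of $\scrT_\alpha$, so some $(w,v)\in E_\alpha$. Then $w$ is the unique parent $\pr(v)$ in $\scrT$, and it is the unique parent in $\scrT_\alpha$ as well. Consequently the roots of $\scrT_\alpha$ are exactly the vertices of $V_\alpha$ that are roots of $\scrT$ or lie in $Z_\lambda$. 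Moreover, $v\in V_\alpha^\circ$ means precisely that $v\notin \mathsf{Root}(\scrT)\cup Z_\lambda$.

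\emph{Step 2: the weights are nonzero.} Statement (ii) follows at once from the description of $V_\alpha^\circ$ in Step 1, since $v\in V_\alpha^\circ$ forces $v\notin Z_\lambda$, that is, $\lambda_v\ne 0$.

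\emph{Step 3: reduction.} I check this on basis vectors. For $u\in V_\alpha$ we have $S_\lambda e_u=\sum_{v\in\Chi(u)}\lambda_v e_v$. The terms with $\lambda_v=0$ vanish, and every remaining child $v$ satisfies $(u,v)\in E_\lambda$, hence $v\in V_\alpha$. So $S_\lambda e_u\in \ell^2(V_\alpha)$. The children of $u$ in $\scrT_\alpha$ are exactly the $v\in\Chi(u)$ with $\lambda_v\ne0$, so $S_\lambda e_u=S_{\lambda^{(\alpha)}}e_u$. For the adjoint, $S_\lambda^* e_v=\overline{\lambda_v}e_{\pr(v)}$ when $v\in V^\circ$, and $S_\lambda^*e_v=0$ when $v$ is a root. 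If $\lambda_v\ne0$ then $\pr(v)\in V_\alpha$, and otherwise $S_\lambda^*e_v=0$. Hence both $S_\lambda$ and $S_\lambda^*$ map $\ell^2(V_\alpha)$ into itself; these subspaces are closed, so boundedness and linearity extend the basis computation. This gives (iii), and in particular $S_{\lambda^{(\alpha)}}$ is bounded.

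\emph{Step 4: direct sum.} The sets $V_\alpha$ partition $V$, so $\ell^2(V)=\bigoplus_\alpha \ell^2(V_\alpha)$. Combined with (iii), this gives $S_\lambda=\bigoplus_\alpha S_{\lambda^{(\alpha)}}$, which is (iv).

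The only mildly delicate point is the bookkeeping of roots in Step 1. One must see that deleting the edge into a vertex of $Z_\lambda$ makes it a root of its component, and does not leave it with a parent outside that component; I would settle this explicitly before doing the other steps. The remaining steps are routine.
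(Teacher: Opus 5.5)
Your proof is correct, and it is the direct verification the paper has in mind: the paper omits the proof as immediate, noting it can also be deduced from \cite[Proposition 3.2 and Lemma 3.5]{zbMATH07570541}. The one spot to tighten is Step 3. Before extending the identity $S_\lambda e_u=S_{\lambda^{(\alpha)}}e_u$ from basis vectors, you need to know that $S_{\lambda^{(\alpha)}}$ is bounded. You can get this from $\norm{S_{\lambda^{(\alpha)}}e_u}=\norm{S_\lambda e_u}\le\norm{S_\lambda}$ and the boundedness criterion recalled in the preliminaries. Alternatively, check directly that $\Lambda_{\scrT_\alpha}f=\Lambda_{\scrT}f$ pointwise for every $f\in\ell^2(V_\alpha)$, which gives equality of the operators including their domains.
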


We state an operator-theoretic lemma before we proceed to prove \Cref{thm:3-concave}.

\begin{lemma}
\label{lem:direct-sum}
Let $\{ \calH _j \}_{j\in J}$ be a family of Hilbert spaces, let $T_j\in\mathcal{B}(\calH _j)$ with $\sup_{j\in J}\|T_j\|<\infty$, and define $\calH=\bigoplus_{j\in J}\calH _j$ and $T=\bigoplus_{j\in J} T_j$. Then the following statements hold:
\begin{enumerate}
    \item[(i)] $T$ is norm-increasing if and only if $T_j$ is norm-increasing for each $j \in J$.
    \item[(ii)] For every $m\in\Z_{\ge 2}$, $T$ is $m$-concave if and only if $T_j$ is $m$-concave for each $j \in J$.
    \item[(iii)]  The following holds: $[\ker T^*]_T = \bigoplus_{j\in J} \, [\ker T_j^*]_{T_j}.$
    \item[(iv)] If $T$ is norm-increasing, then $\calH_{\infty} (T) = \bigoplus_{j\in J} \calH_{\infty} (T_j).$
\end{enumerate}

\end{lemma}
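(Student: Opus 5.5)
The plan is to exploit the fact that $T=\bigoplus_{j\in J}T_j$ acts coordinatewise, so every polynomial expression in $T$ and $T^*$ is the direct sum of the same expressions in $T_j$ and $T_j^*$. The hypothesis $\sup_j\norm{T_j}<\infty$ guarantees that $T$ is bounded with $\norm{T}=\sup_j\norm{T_j}$. It also gives $T^*=\bigoplus_j T_j^*$, $T^{*k}T^k=\bigoplus_j T_j^{*k}T_j^k$, and hence, from \eqref{eq:beta-m},
\begin{equation*}
\beta_m(T)=\bigoplus_{j\in J}\beta_m(T_j), \qquad m\in\Z_{\ge 0}.
\end{equation*}
A block-diagonal self-adjoint operator is positive (resp.\ negative) if and only if each block is. To see this, test on vectors supported in a single summand $\calH_j$ for one direction, and sum $\ip{\beta_m(T_j)x_j,x_j}$ over $j$ for the other. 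This settles (i) with $m=1$ and (ii) for every $m\in\Z_{\ge 2}$.

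For (iii), first I will note that $\ker T^*=\bigoplus_j\ker T_j^*$, since $T^*x=0$ if and only if $T_j^*x_j=0$ for every $j$. For $\supseteq$: each $\calH_j$ is invariant under $T$, so $T^n(\ker T_j^*)=T_j^n(\ker T_j^*)\subset\calH_j$. These vectors lie in $[\ker T^*]_T$, so each $[\ker T_j^*]_{T_j}$ is contained in $[\ker T^*]_T$, and therefore so is their closed orthogonal sum. For $\subseteq$: the space $\calK:=\bigoplus_j[\ker T_j^*]_{T_j}$ is closed, it is $T$-invariant (check on each summand and use continuity), and it contains $\ker T^*$. By minimality of $[\ker T^*]_T$, we get $[\ker T^*]_T\subset\calK$.

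Part (iv) is the only place where a hypothesis is needed. The inclusion $\calH_\infty(T)\subset\bigoplus_j\calH_\infty(T_j)$ is immediate. Since $T^n$ is block diagonal, $T^n\calH\subset\bigoplus_j T_j^n\calH_j$; intersecting over $n$ and using that the $j$-th coordinate of any vector in $\bigcap_n T^n\calH$ lies in every $T_j^n\calH_j$ gives this inclusion.

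The converse inclusion is where I expect the main obstacle, because writing $x_j=T_j^ny_j$ for each $j$ gives no control on $\sum_j\norm{y_j}^2$. Here norm-increasing is used. By (i) each $T_j$ satisfies $\norm{T_jy}\ge\norm{y}$, so $\norm{T_j^ny}\ge\norm{y}$. Now take $x\in\bigoplus_j\calH_\infty(T_j)$ and fix $n$. Each $T_j^n$ is injective, so there is a unique $y_j$ with $T_j^ny_j=x_j$, and it satisfies $\norm{y_j}\le\norm{x_j}$. Hence $y:=(y_j)_j$ belongs to $\calH$ with $\norm{y}\le\norm{x}$, and $T^ny=x$. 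Therefore $x\in T^n\calH$ for all $n$, which gives $x\in\calH_\infty(T)$ and completes (iv).
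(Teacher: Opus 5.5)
Your proof is correct and complete. The paper states this lemma without proof, treating it as routine, so there is no argument of the paper's to compare with. Your verification of (i)--(iii) is the natural one: the identity $\beta_m(T)=\bigoplus_{j}\beta_m(T_j)$ settles (i) and (ii), and the minimality of $[\ker T^*]_T$ settles (iii).

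You also correctly identified the only non-formal point. In (iv), the norm-increasing hypothesis gives the uniform bound $\norm{y_j}\le\norm{x_j}$. This is what lets you assemble the coordinatewise preimages into a vector of $\calH$.

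Some such uniform lower bound is genuinely needed. Take $T_j$ to be multiplication by $1/j$ on $\C$. Then each $\calH_\infty(T_j)=\C$, so $\bigoplus_j\calH_\infty(T_j)=\ell^2$. However, $\calH_\infty\bigl(\bigoplus_j T_j\bigr)$ consists only of the rapidly decreasing sequences, which form a proper dense subspace of $\ell^2$.
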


Before we prove \Cref{thm:3-concave}, we restate it again here for convenience.

    \threeConcave*
    \begin{proof}
        In view of \cite[Theorem 3.2.1]{MR2919910}, we may assume without any loss of generality that the weights are nonnegative. By using \Cref{prop:tree-decomposition}, we have that there exist directed trees $\{\scrT_\alpha=( V_\alpha, E_\alpha)\}_{\alpha\in A}$ and weighted shifts $S_{\lambda^{(\alpha)}}$ with positive weights such that $S_\lambda = \bigoplus_{\alpha\in A}S_{\lambda^{(\alpha)}}$.
        Since each $\ell^2( V_\alpha)$ reduces $S_\lambda$, every $S_{\lambda^{(\alpha)}}$ is norm-increasing and $3$-concave by items $(i)$ and $(ii)$ of \Cref{lem:direct-sum}. Hence, each $S_{\lambda^{(\alpha)}}$ has the Wold-type decomposition by \Cref{prop:wold-type}. The rest of the proof now follows from items $(iii)$ and $(iv)$ of \Cref{lem:direct-sum}.
    \end{proof}

\section{When does a generalised root exist?}\label{sec:generalised-root}

In this section, we investigate when directed trees admit a generalised root. We recall the definition of a generalised root:
\begin{definition}\label{def:generalised-root}
    Let $\scrT=( V, E)$ be a rootless directed tree. A vertex $\omega\in V_{\prec}$ is called a \textit{generalised root of $\scrT$} if
    \begin{equation}
        \label{eq:generalised-root-1}
        \abs{\Chi (\pr ^{\ip{k}} (\omega))} =1, \qquad k \in \Z_{\ge 1}.
    \end{equation}
\end{definition}

\begin{remark}
    A rootless directed tree admits at most one generalised root.
\end{remark}

\begin{proposition}
    \label{prop:generalised-root-not-analytic}
    Let $\scrT=(V,E)$ be a rootless directed tree which admits a generalised root, and let $S_{\lambda}\in\calB(\ell^2(V))$ be a weighted shift with positive weights. Then $S_{\lambda}$ is not analytic.
\end{proposition}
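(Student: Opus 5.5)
Let $\omega$ be the generalised root and set $\omega_k := \pr^{\ip{k}}(\omega)$ for $k \in \Z_{\ge 0}$; these exist for every $k$ because $\scrT$ is rootless. By \eqref{eq:generalised-root-1}, each $\omega_k$ with $k\ge 1$ has exactly one child, namely $\omega_{k-1}$. My plan is to exhibit a nonzero vector lying in $S_\lambda^n(\ell^2(V))$ for every $n$, namely $e_{\omega_0}$ itself (or a suitable multiple of it).

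The key step is to show that $e_{\omega}\in S_\lambda^n(\ell^2(V))$ for each $n\in\Z_{\ge 0}$. By \Cref{lem:JJS-powers}, $S_\lambda^n e_{\omega_n} = \sum_{v\in\Chi^{\ip{n}}(\omega_n)}\lambda_{\omega_n\mid v}e_v$. Since $\Chi(\omega_k)=\{\omega_{k-1}\}$ for every $k\ge1$, induction gives $\Chi^{\ip{n}}(\omega_n)=\{\omega\}$. Hence
\begin{equation*}
S_\lambda^n e_{\omega_n}=\lambda_{\omega_n\mid\omega}\,e_\omega ,\qquad \lambda_{\omega_n\mid\omega}=\prod_{k=0}^{n-1}\lambda_{\omega_k}>0 .
\end{equation*}
Here positivity of the weights is used: the product is nonzero. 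Therefore
\begin{equation*}
e_\omega=S_\lambda^n\bigl(\lambda_{\omega_n\mid\omega}^{-1}e_{\omega_n}\bigr)\in S_\lambda^n(\ell^2(V))
\end{equation*}
for every $n$, so $e_\omega\in\calH_\infty(S_\lambda)$. Since $e_\omega\neq0$, $S_\lambda$ is not analytic.

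There is essentially no obstacle; the only points to check are that $\pr^{\ip{k}}(\omega)$ is defined for all $k$ (by rootlessness) and that the chain $\omega_n\to\cdots\to\omega$ is the only descendant path of length $n$ (by \eqref{eq:generalised-root-1}). The branching condition $\omega\in V_\prec$ is not even needed for non-analyticity. Boundedness of $S_\lambda$ only ensures that the operator is defined on all of $\ell^2(V)$.
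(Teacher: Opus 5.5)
Your proposal is correct and follows essentially the same route as the paper. Both arguments use the chain $\pr^{\ip{n}}(\omega)$ to get $\Chi^{\ip{n}}(\pr^{\ip{n}}(\omega))=\{\omega\}$, apply \Cref{lem:JJS-powers} with positivity of the weights, and conclude that $e_\omega\in\calH_\infty(S_\lambda)$.
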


\begin{proof}
    Let $\omega$ be the generalised root of $\scrT$, and define $u_n:=\pr^{\ip{n}}(\omega)$ for each $n\in\Z_{\ge 0}$. Since $\omega$ is a generalised root, we have $\abs{\Chi(u_n)}=1$ for every $n\in\Z_{\ge 1}$. Since $u_{n-1}\in\Chi(u_n)$, it follows that $\Chi(u_n)=\{u_{n-1}\}$ for every $n\in\Z_{\ge 1}$. Hence, $\Chi^{\ip{n}}(u_n)=\{\omega\}$ for every $n\in\Z_{\ge 1}$. By \Cref{lem:JJS-powers}, we have that $S_{\lambda}^{n}e_{u_n}=\lambda_{u_n\mid\omega}e_{\omega}$ for every $n\in\Z_{\ge 1}$. Since the weights are positive, $\lambda_{u_n\mid\omega}>0$. Thus, $e_{\omega}\in S_{\lambda}^{n}(\ell^2(V))$ for every $n\in\Z_{\ge 1}$. Consequently, $e_{\omega}\in\calH_{\infty}(S_{\lambda})$, and hence $S_{\lambda}$ is not analytic.
\end{proof}

\begin{definition}[{\cite[Definitions 1 and 3]{MR3532172}}]
    Let $\scrT=( V, E)$ be a directed tree. The \emph{branching index} of $\scrT$, denoted by $b_{\scrT}$, is defined as follows:

    If $\scrT$ is rooted, then
    \begin{equation*}
        b_{\scrT} := \begin{cases}
            1+\sup\{n_w:w\in V_{\prec}\}, &  V_{\prec}\ne\varnothing,\\
            0, &  V_{\prec}=\varnothing,
        \end{cases}
    \end{equation*}
    where $n_w \in \Z_{\ge 0}$ is the unique nonnegative integer satisfying $w \in \Chi ^{\ip{n_w}} (\mathsf{root})$ for $w \in  V$.

    If $\scrT$ is rootless, then
    \begin{equation*}
        b_{\scrT} := \inf \left\{ m\in\Z_{\ge 0}: \Chi^{\ip{k}}( V_{\prec})\cap V_{\prec} =\varnothing \text{ for every } k\ge m \right\}.
    \end{equation*}
    In either case, we say $\scrT$ has \textit{finite branching index} if $b_{\scrT} < \infty$; else we say $\scrT$ has \textit{infinite branching index}.
\end{definition}

The following proposition, whose proof we omit, is a straightforward extension of \cite[Lemma 6.1]{MR3532172}. In particular, it shows that a rootless directed tree with finite branching index admits a generalised root if and only if it has a branching vertex, in which case the generalised root is unique.

\begin{proposition}
\label{prop:characterisation-of-finite-branching-rootless-trees}
    Let $\scrT=( V, E)$ be a rootless directed tree having finite branching index. Then exactly one of the following holds:
    \begin{enumerate}
        \item[\rm (i)] $ V_{\prec}=\varnothing$ and $\scrT$ is leafless, in which case $\scrT$ is isomorphic to the directed tree on $\Z$ given by $\Chi(n)=\{n+1\}$ for each $n\in\Z$;
        \item[\rm (ii)] $ V_{\prec}=\varnothing$ and $\scrT$ is not leafless, in which case $\scrT$ is isomorphic to the directed tree on $\Z_{\ge 0}$ given by $\Chi(n+1)=\{n\}$ for each $n\in\Z_{\ge 0}$;
        \item[\rm (iii)] $ V_{\prec}\ne\varnothing$, in which case $\scrT$ admits a unique generalised root.
    \end{enumerate}
\end{proposition}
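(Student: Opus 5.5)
The strategy is to split on whether $V_{\prec}$ is empty, and to classify the tree by following parents backward in both cases.

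\emph{Case $V_{\prec}=\varnothing$.} Every vertex then has at most one child. Fix a vertex $v_0$. Because $\scrT$ is rootless, the vertices $v_{-n}:=\pr^{\ip{n}}(v_0)$ are defined for all $n\in\Z_{\ge 0}$. Forward, each vertex has at most one child, so from $v_0$ we get a chain $v_0,v_1,\dots$ with $\Chi(v_k)=\{v_{k+1}\}$, which either continues forever or stops at a leaf. Connectedness plus the absence of circuits shows that every vertex lies on this chain. Indeed, an undirected path from $v_0$ to any $w$ can only move to parents or to unique children, so it never leaves the chain. All the $v_k$ are distinct, since a repetition would produce a circuit. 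If the forward chain never stops, $\scrT$ is leafless and $n\mapsto v_n$ gives the isomorphism in (i). If it stops at a leaf $v_N$, then $\scrT$ is not leafless, and $n\mapsto v_{N-n}$ identifies $\scrT$ with the tree on $\Z_{\ge 0}$ in (ii). Conversely, a tree with at most one child per vertex has at most one leaf, so exactly one of (i) and (ii) occurs.

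\emph{Case $V_{\prec}\ne\varnothing$.} Let $b=b_{\scrT}<\infty$. The plan is to prove that there is a branching vertex $\omega$ none of whose proper ancestors branch. Such an $\omega$ is a generalised root in the sense of \Cref{def:generalised-root}, and uniqueness then follows from the preceding remark. Fix $w\in V_{\prec}$ and consider its ancestors $\pr^{\ip{k}}(w)$, $k\ge 1$. Suppose some ancestor $\pr^{\ip{k}}(w)$ with $k\ge b$ were branching. Then $w\in \Chi^{\ip{k}}(\pr^{\ip{k}}(w))\subset \Chi^{\ip{k}}(V_{\prec})$ and $w\in V_{\prec}$, which contradicts the definition of $b$. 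Hence no ancestor of $w$ at distance $\ge b$ branches. The branching vertices among $\{\pr^{\ip{k}}(w): 0\le k< b\}$ form a finite nonempty set (it contains $w$). Let $\omega$ be the one with the largest $k$, i.e.\ the oldest. Every proper ancestor $\pr^{\ip{j}}(\omega)=\pr^{\ip{k+j}}(w)$ is then non-branching, either because $k+j<b$ and $k$ was maximal, or because $k+j\ge b$.

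It remains to see that these ancestors have exactly one child, not zero. An ancestor has a descendant, namely $\omega$, so it has at least one child. This gives \eqref{eq:generalised-root-1}. The two cases are exclusive by the condition on $V_{\prec}$, so exactly one of (i)--(iii) holds.

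The only delicate step is the bookkeeping in the second case. One must check that the definition of $b_{\scrT}$ really forbids branching ancestors at distance $\ge b$. The subtlety is that $\Chi^{\ip{k}}(V_{\prec})$ is defined via iterated children sets, and we need to confirm it contains $w$ whenever $\pr^{\ip{k}}(w)\in V_{\prec}$; this holds because $w\in\Chi^{\ip{k}}(\pr^{\ip{k}}(w))$ by definition of the parent map. The first case relies only on connectedness and acyclicity and should be routine.
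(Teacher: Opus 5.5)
Your proof is correct. When $V_{\prec}=\varnothing$, the parent/unique-child chain through a fixed vertex exhausts $V$ by connectedness and gives (i) or (ii). When $V_{\prec}\ne\varnothing$, the oldest branching ancestor of a fixed $w\in V_{\prec}$ exists because a branching $\pr^{\ip{k}}(w)$ with $k\ge b_{\scrT}$ would put $w$ in $\Chi^{\ip{k}}(V_{\prec})\cap V_{\prec}$ (which also forces $b_{\scrT}\ge 1$), and uniqueness comes from the preceding remark.

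The paper gives no proof and only calls the statement a straightforward extension of \cite[Lemma 6.1]{MR3532172}. Your self-contained argument is a natural way to carry out that extension.
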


If $\scrT$ is a rootless directed tree with finite branching index, $ V_{\prec}\ne\emptyset$, and $\omega$ is its generalised root, then the directed subtree $\scrT_{\omega}$ induced by $\mathsf{Des}(\omega)$ is a rooted directed tree with root $\omega$ and finite branching index.
Moreover, $b_{\scrT}=b_{\scrT_{\omega}}$.
Thus, $\scrT$ may be viewed as the graph obtained by attaching
$\scrT_{\omega}$ to the one-sided infinite directed path
\begin{equation*}
    \cdots \longrightarrow \pr^{\ip{3}}(\omega)
    \longrightarrow \pr^{\ip{2}}(\omega)
    \longrightarrow \pr(\omega)
\end{equation*}
by the edge $(\pr(\omega),\omega)$. See \Cref{fig:finite-branching-index} for an illustration of a rootless directed tree with finite branching index which has a branching vertex.

    \begin{figure}[H]
        \centering  
        \includegraphics[width=0.4\textwidth]{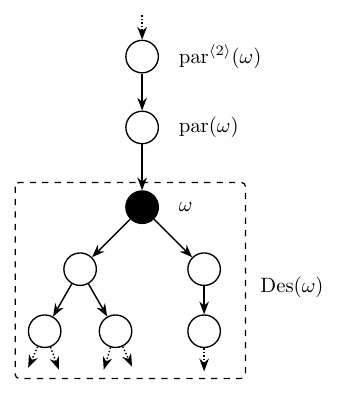}
        \caption{A rootless directed tree with finite branching index}
        \label{fig:finite-branching-index}
    \end{figure}

In the next proposition, we show that if a rootless directed tree has a branching vertex and supports a weighted shift $S_\lambda$ such that $\dim\ker S_\lambda^*<\infty$, then the tree admits a unique generalised root.

\begin{proposition}
\label{prop:finite-codimension}
    Let $\scrT=( V, E)$ be a directed tree, and let $S_\lambda\in\calB(\ell^2( V))$ be a bounded weighted shift with complex weights. If $\dim\ker S_\lambda^*<\infty$, then $\scrT$ is locally finite and has only finitely many branching vertices. In particular, $b_{\scrT}<\infty$.

    Moreover, if $\scrT$ is rootless and $ V_{\prec} \ne \emptyset$, then $\scrT$ admits a unique generalised root.
\end{proposition}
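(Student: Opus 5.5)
The plan is to compute $\ker S_\lambda^*$ explicitly. Since $S_\lambda^*e_v = \overline{\lambda_v}\, e_{\pr(v)}$ for $v\in V^\circ$ and $S_\lambda^* e_{\mathsf{root}}=0$, we get
\begin{equation*}
S_\lambda^* f = \sum_{u\in V}\Bigl(\sum_{v\in\Chi(u)}\overline{\lambda_v}f(v)\Bigr)e_u, \qquad f\in\ell^2(V).
\end{equation*}
Consequently $\ker S_\lambda^*$ contains $e_{\mathsf{root}}$ (if a root exists) together with, for every $u\in V$, the subspace
\begin{equation*}
K_u := \Bigl\{ f\in \ell^2(\Chi(u)) : \sum_{v\in\Chi(u)}\overline{\lambda_v} f(v)=0 \Bigr\}.
\end{equation*}
The sets $\Chi(u)$, $u\in V$, are pairwise disjoint, so the $K_u$ are mutually orthogonal. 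Each $K_u$ is the kernel of a single bounded linear functional on $\ell^2(\Chi(u))$; the functional is bounded because $\sum_{v\in\Chi(u)}|\lambda_v|^2=\norm{S_\lambda e_u}^2<\infty$. Hence $\dim K_u\ge |\Chi(u)|-1$, with the convention that this is infinite when $\Chi(u)$ is infinite.

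From this, $\dim\ker S_\lambda^*\ge\sum_{u\in V}(|\Chi(u)|-1)^+$. Finiteness of $\dim\ker S_\lambda^*$ therefore forces every $\Chi(u)$ to be finite, so $\scrT$ is locally finite, and forces $|\Chi(u)|\ge 2$ for only finitely many $u$, so $V_\prec$ is finite. No positivity of the weights is needed here.

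For $b_\scrT<\infty$ I split into the rooted and rootless cases.
\begin{itemize}
\item[] \emph{Rooted case.} $\sup\{n_w : w\in V_\prec\}$ is a supremum over a finite set, hence finite.
\item[] \emph{Rootless case.} Because $\scrT$ has no circuits, every $w\in V_\prec$ lies in $\Chi^{\ip{k}}(w')$ for at most one $k$ for each fixed $w'$. So for each pair $(w',w)\in V_\prec\times V_\prec$ there is at most one $k$ with $w\in\Chi^{\ip{k}}(w')$. Finitely many pairs yield finitely many such $k$, and $m$ larger than all of them witnesses $b_\scrT\le m$.
\end{itemize}
The only care needed is the uniqueness of $k$, which follows from uniqueness of parents: if $w\in\Chi^{\ip{k}}(w')$ then $w'=\pr^{\ip{k}}(w)$, and $k\ne k'$ would produce a circuit.

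The final assertion follows directly from \Cref{prop:characterisation-of-finite-branching-rootless-trees}(iii), since $\scrT$ is rootless with finite branching index and $V_\prec\ne\emptyset$. If one prefers to avoid relying on the omitted proof, there is a direct construction. Pick $w\in V_\prec$. Since $V_\prec$ is finite and the ancestors $\pr^{\ip{k}}(w)$ are pairwise distinct (no circuits), only finitely many of them branch. Let $\omega$ be the branching ancestor of $w$ (possibly $w$ itself) of maximal $k$. Then all proper ancestors of $\omega$ have exactly one child, which is \eqref{eq:generalised-root-1}.

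Uniqueness then rests on connectedness. Any two vertices $a,b$ of a rootless tree have a common ancestor; this follows by walking along an undirected path and using that each vertex has a unique parent. Suppose $\omega'$ were a second generalised root. Take a common ancestor $c$ of $\omega$ and $\omega'$ and write $\omega=\pr^{\ip{p}}$-descendant and $\omega'=\pr^{\ip{q}}$-descendant of $c$ along the two ancestral chains. If $\omega\ne\omega'$, choose $c$ to be the nearest common ancestor. Then either $c$ has two distinct children on the two chains, so $c\in V_\prec$ is a proper ancestor of $\omega$, contradicting \eqref{eq:generalised-root-1}; or one of $\omega,\omega'$ is a proper ancestor of the other and lies in $V_\prec$, again contradicting \eqref{eq:generalised-root-1}.

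The main (mild) obstacle is justifying the lower bound $\dim K_u\ge|\Chi(u)|-1$ when $\Chi(u)$ is infinite. This is immediate, since the kernel of a functional on an infinite-dimensional space is infinite-dimensional.
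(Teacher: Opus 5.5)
Your proof is correct and follows essentially the same route as the paper. The paper quotes the exact decomposition $\ker S_\lambda^*=\ell^2(\mathsf{Root}(\scrT))\oplus\bigoplus_{u\in V'}\bigl(\ell^2(\Chi(u))\ominus\ip{\lambda^u}\bigr)$ from \cite[Proposition 3.5.1]{MR2919910}, whereas you derive directly only the inclusion of these mutually orthogonal pieces, which is all that is needed to force local finiteness and the finiteness of $V_\prec$; both arguments then invoke \Cref{prop:characterisation-of-finite-branching-rootless-trees}(iii), and your no-circuit argument for $b_\scrT<\infty$ and your direct construction and uniqueness proof of the generalised root fill in steps the paper treats as immediate or omits.
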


\begin{proof}
    By \cite[Proposition 3.5.1]{MR2919910}, we have that
    \begin{equation*}
        \ker S_{\lambda}^{*}=\ell^2(\mathsf{Root}(\scrT))\oplus\bigoplus_{u\in V'}\left(\ell^2(\Chi(u))\ominus\ip{\lambda^{u}}\right),
    \end{equation*}
    where $\lambda^{u}:\Chi(u)\to\C$ is the function in $\ell^2(\Chi(u))$ given by $\lambda^{u}(v)=\lambda_v$ for each $v\in\Chi(u)$. Here $\langle\lambda^u\rangle$ denotes the linear span of $\{\lambda^u\}$. Since $\ker S_{\lambda}^{*}$ is finite dimensional, we have that $\ell^2(\Chi(u))\ominus\ip{\lambda^{u}}$ is finite dimensional for each $u\in V'$. Since $\ip{\lambda^{u}}$ has dimension at most one, it follows that $\ell^2(\Chi(u))$ is finite dimensional for each $u\in V'$. Hence, $\Chi(u)$ is finite for each $u\in V'$. If $u\in V\setminus V'$, then $\Chi(u)=\varnothing$. Therefore, $\scrT$ is locally finite.

    Define $F:=\{u\in V':\ell^2(\Chi(u))\ominus\ip{\lambda^{u}}\ne\{0\}\}$. Since $\ker S_{\lambda}^{*}$ is finite dimensional, the set $F$ must be finite. If $u\in V_{\prec}$, then $\abs{\Chi(u)}\ge2$, and since $\dim\ip{\lambda^{u}}\le1$, we have $\ell^2(\Chi(u))\ominus\ip{\lambda^{u}}\ne\{0\}$. Thus, $ V_{\prec}\subseteq F$, and consequently, $ V_{\prec}$ is finite. It follows from the definition of the branching index that $b_{\scrT}<\infty$.

    Suppose, moreover, that $\scrT$ is rootless and $ V_{\prec} \ne \emptyset$. Since $b_{\scrT}<\infty$, it follows from \Cref{prop:characterisation-of-finite-branching-rootless-trees} that $\scrT$ admits a unique generalised root.
\end{proof}

    \begin{proposition}
        \label{prop:some-finite-generation-implies-generalised-root}
        Let $\scrT=( V, E)$ be a leafless rootless directed tree such that $ V_{\prec}\ne\varnothing$. If $\scrG_u$ is finite for some $u\in V$, then $\scrT$ admits a unique generalised root.
    \end{proposition}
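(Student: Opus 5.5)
The plan is to exploit the fact that in a leafless tree the parent map sends a generation onto a generation, so cardinalities of generations can only grow as one moves down the tree. Once some generation is finite, every earlier one is finite, and this forces the parents far up the tree to have exactly one child.

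\textbf{Step 1: $\pr$ maps generations onto generations.} Since $\scrT$ is rootless, every vertex has a parent. If $v\sim w$, then $\pr(v)\sim\pr(w)$. Indeed, if $\pr^{\ip{n}}(v)=\pr^{\ip{n}}(w)$ with $n\ge1$, then $\pr^{\ip{n-1}}(\pr(v))=\pr^{\ip{n-1}}(\pr(w))$. If $n=0$, then $v=w$ and there is nothing to show.

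Define $u_k:=\pr^{\ip{k}}(u)$. Then $\pr(\scrG_{u_k})\subseteq \scrG_{u_{k+1}}$. Conversely, let $x\in\scrG_{u_{k+1}}$. Since $\scrT$ is leafless, $x$ has a child $y$, and one checks that $y\sim u_k$. So $\pr$ maps $\scrG_{u_k}$ onto $\scrG_{u_{k+1}}$. Hence $|\scrG_{u_{k+1}}|\le|\scrG_{u_k}|$, and every $\scrG_{u_k}$ is finite. Moreover $\scrG_{u_k}=\bigsqcup_{x\in\scrG_{u_{k+1}}}\Chi(x)$, so
\[
|\scrG_{u_k}|=\sum_{x\in\scrG_{u_{k+1}}}|\Chi(x)|.
\]

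\textbf{Step 2: the generations above $u$ eventually contain no branching vertex.} The sequence $|\scrG_{u_k}|$ is nonincreasing and consists of positive integers, so it is eventually constant, say for $k\ge K$. For such $k$, the displayed identity together with $|\Chi(x)|\ge1$ (leaflessness) gives $|\Chi(x)|=1$ for every $x\in\scrG_{u_{k+1}}$. Thus no generation $\scrG_{u_k}$ with $k\ge K+1$ contains a branching vertex.

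\textbf{Step 3: produce the generalised root.} Pick $b\in V_{\prec}$. I expect the main obstacle to be relating $b$ to the generations $\scrG_{u_k}$. Connectedness handles this. There is an undirected path from $b$ to $u$, and in a tree this gives some $n$ and $p,q\ge0$ with $\pr^{\ip{p}}(b)=\pr^{\ip{q}}(u)=u_q$. Hence $\pr^{\ip{m}}(b)\sim u_{q+m-p}$ for all $m$ with $q+m-p\ge0$. In particular, the sequence $\pr^{\ip{m}}(b)$ passes through every generation $\scrG_{u_k}$ with $k$ large.

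By Step 2, only finitely many ancestors $\pr^{\ip{m}}(b)$, $m\ge0$, are branching vertices, and $m=0$ is one of them. Let $M$ be the largest such $m$ and set $\omega:=\pr^{\ip{M}}(b)$. Then $\omega\in V_{\prec}$ and $|\Chi(\pr^{\ip{k}}(\omega))|=1$ for all $k\ge1$, so $\omega$ is a generalised root.

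\textbf{Step 4: uniqueness.} Uniqueness is the Remark following \Cref{def:generalised-root}. For completeness: if $\omega,\omega'$ are both generalised roots, take a common ancestor $\pr^{\ip{a}}(\omega)=\pr^{\ip{a'}}(\omega')$ with $a$ minimal. If $a\ge1$, then $\omega'$ is a descendant of $\pr^{\ip{a}}(\omega)$. If $\omega'$ is not an ancestor-line vertex $\pr^{\ip{j}}(\omega)$, then minimality of $a$ forces $\pr^{\ip{a}}(\omega)$ to have two distinct children, contradicting \eqref{eq:generalised-root-1}. So $\omega'=\pr^{\ip{j}}(\omega)$ with $j\ge1$. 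By \eqref{eq:generalised-root-1} this vertex has exactly one child, so $\omega'\notin V_{\prec}$, a contradiction. Hence $a=0$, and by symmetry $a'=0$ as well, so $\omega=\omega'$.
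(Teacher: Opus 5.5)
Your argument is correct and is essentially the paper's proof. The shared steps are: surjectivity of $\pr\colon\scrG_{u_k}\to\scrG_{u_{k+1}}$ gives nonincreasing, eventually constant sizes $|\scrG_{u_k}|$ (you verify this directly, where the paper cites \cite[Proposition 2.1.12(viii)]{MR2919910}); hence the far generations contain no branching vertices; and the generalised root is the highest branching ancestor of a fixed branching vertex, reached through a common ancestor with $u$. Your explicit uniqueness argument is an addition, since the paper relies on the unproved remark following \Cref{def:generalised-root}; it is sound, though the final ``by symmetry'' step is better justified directly: if $a=0$ and $a'\ge 1$, the generalised-root property of $\omega'$ forces $\abs{\Chi(\omega)}=1$, which contradicts $\omega\in V_{\prec}$.
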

    
    \begin{proof}
        Let $u\in V$ such that $\scrG_u$ is finite, and define $u_n:=\pr^{\ip{n}}(u)$ for each $n\in\Z_{\ge 0}$. By \cite[Proposition 2.1.12(viii)]{MR2919910}, we have that $\Chi(\scrG_{u_{n+1}})=\scrG_{u_n}$ for each $n\in\Z_{\ge 0}$. Since $\scrT$ is leafless, the map $\pr:\scrG_{u_n}\to\scrG_{u_{n+1}}$ is onto. Hence, $\abs{\scrG_{u_{n+1}}}\le\abs{\scrG_{u_n}}$ for each $n\in\Z_{\ge 0}$. Thus, $\{\abs{\scrG_{u_n}}\}_{n\in\Z_{\ge 0}}$ is a non-increasing sequence of positive integers, and hence there exists $N\in\Z_{\ge 0}$ such that $\abs{\scrG_{u_{n+1}}}=\abs{\scrG_{u_n}}$ for every $n\ge N$. Therefore, $\pr:\scrG_{u_n}\to\scrG_{u_{n+1}}$ is bijective for every $n\ge N$. Since $\Chi(\scrG_{u_{n+1}})=\scrG_{u_n}$, it follows that $\abs{\Chi(v)}=1$ for every $v\in\scrG_{u_{n+1}}$ and every $n\in \Z_{\ge N}$. In particular, $\scrG_{u_n}\cap V_{\prec}=\varnothing$ for every $n\in \Z_{\ge N+1}$.
    
        Fix $w\in V_{\prec}$. By \cite[Proposition 2.1.4]{MR2919910}, there is some $v\in V$ such that $\{u,w\}\subset\mathsf{Des}(v)$. Hence, we have that there are some $r,s\in\Z_{\ge 0}$ such that $v=\pr^{\ip{r}}(w)=\pr^{\ip{s}}(u)$. Thus, $\pr^{\ip{r+k}}(w)=u_{s+k}$ for every $k\in\Z_{\ge 0}$. If $k\ge\max\{0,N+1-s\}$, then $s+k\ge N+1$, and hence $u_{s+k}\notin V_{\prec}$. Therefore, $\pr^{\ip{r+k}}(w)\notin V_{\prec}$ for every $k\ge\max\{0,N+1-s\}$. That is, $\pr^{\ip{k}}(w)\notin V_{\prec}$ for every $k\ge r+\max\{0,N+1-s\}$.
    
        Hence, the set $F:=\{k\in\Z_{\ge 0}:\pr^{\ip{k}}(w)\in V_{\prec}\}$ is finite and nonempty. Let $k_0:=\max F$ and we define $\omega:=\pr^{\ip{k_0}}(w)$. Then $\omega\in V_{\prec}$ and $\pr^{\ip{k}}(\omega)\notin V_{\prec}$ for every $k\in\Z_{\ge 1}$. Since $\scrT$ is leafless, $\abs{\Chi(\pr^{\ip{k}}(\omega))}=1$ for every $k\in\Z_{\ge 1}$. Hence, $\omega$ is the generalised root of $\scrT$.
    \end{proof}

    The following proposition gives necessary structural properties of a directed tree which admits an analytic norm-increasing weighted shift with positive weights which fails to possess the wandering subspace property. Suppose first that $\scrT=(V,E)$ is rootless and leafless with $V_{\prec}=\varnothing$. Then $\scrT$ is isomorphic to the directed tree on $\Z$ given by $\Chi(n)=\{n+1\}$ for each $n\in\Z$. Hence, no weighted shift on $\scrT$ with positive weights can be analytic. Thus, in the case of positive weights, the implication from analyticity to the wandering subspace property is vacuous. We now consider the case which is relevant to \Cref{thm:counterexample}. 

    \begin{proposition}
        \label{prop:necessity-of-tree structure}
        Let $\scrT=(V,E)$ be a directed tree with $V_{\prec}\ne\varnothing$, and let $S_{\lambda}\in\calB(\ell^2(V))$ be an analytic norm-increasing weighted shift with positive weights which does not possess the wandering subspace property. Then $V$ is countable, $\scrT$ is rootless and leafless, $\scrT$ has infinite branching index, and every generation of $\scrT$ is infinite.
    \end{proposition}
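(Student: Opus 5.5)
We verify the listed properties one at a time, using the structural results of this section and the earlier facts about rooted trees and injectivity.

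\emph{Rootless and leafless.} If $\scrT$ were rooted, then $S_\lambda$ would admit a Wold-type decomposition by \cite[Proposition 1.3.4]{MR3740250}. For an analytic operator, admitting a Wold-type decomposition is equivalent to possessing the wandering subspace property, so $S_\lambda$ would have that property, contradicting the hypothesis. Hence $\scrT$ is rootless. Since $S_\lambda$ is norm-increasing, it is injective, and \cite[Proposition 3.1.7]{MR2919910} then shows that $\scrT$ is leafless.

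\emph{Infinite branching index and infinite generations.} Suppose $b_{\scrT}<\infty$. Because $\scrT$ is rootless and $V_{\prec}\neq\varnothing$, \Cref{prop:characterisation-of-finite-branching-rootless-trees}(iii) gives a generalised root. By \Cref{prop:generalised-root-not-analytic}, this contradicts the analyticity of $S_\lambda$, so $b_{\scrT}=\infty$. Likewise, if some generation $\scrG_u$ were finite, \Cref{prop:some-finite-generation-implies-generalised-root} would give a generalised root, which yields the same contradiction. Hence every generation is infinite.

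\emph{Countability of $V$.} This is the step needing a little care; I do not expect a genuine obstacle. Since $S_\lambda$ is bounded with positive weights, $\norm{S_\lambda e_u}^2=\sum_{v\in\Chi(u)}\lambda_v^2<\infty$. As every $\lambda_v>0$, a convergent sum of positive terms has only countably many terms, so $\Chi(u)$ is countable for each $u$.

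Now fix $u_0\in V$. Since $\scrT$ is a connected rootless tree, \cite[Proposition 2.1.4]{MR2919910} shows that for any $w\in V$ some ancestor $\pr^{\ip{k}}(u_0)$ has $w$ among its descendants. Hence
\[
V=\bigcup_{k\ge0}\mathsf{Des}\bigl(\pr^{\ip{k}}(u_0)\bigr).
\]
Each set $\mathsf{Des}(x)=\bigcup_n\Chi^{\ip{n}}(x)$ is a countable union of countable sets, because each $\Chi^{\ip{n}}(x)$ is countable by induction on $n$. Therefore $V$ is countable.

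Alternatively, countability follows from the paper's standing assumption that $\calH=\ell^2(V)$ is separable, since $\{e_u\}_{u\in V}$ is an orthonormal basis. The argument above has the advantage of not depending on that assumption.
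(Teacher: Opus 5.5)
Your proof is correct and matches the paper's argument at each step: rootlessness from \cite[Proposition 1.3.4]{MR3740250}, leaflessness from injectivity, and infinite branching index and infinite generations via a generalised root and \Cref{prop:generalised-root-not-analytic}. The one difference is countability of $V$, which the paper simply cites as \cite[Proposition 3.1.10]{MR2919910}. You instead prove it directly: $\sum_{v\in\Chi(u)}\lambda_v^2<\infty$ gives countably many children, and you then exhaust $V$ by the descendants of the ancestors of a fixed vertex. This just unpacks the cited result.
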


    \begin{proof}
        By \cite[Proposition 3.1.10]{MR2919910}, we have that $\calV$ is countable. If $\scrT$ were rooted, $S_{\lambda}$ would possess the wandering subspace property (see \cite[Proposition 1.3.4]{MR3740250}). Hence $\scrT$ must be rootless. Moreover, if $\scrT$ had a leaf, $S_{\lambda}$ cannot be injective, and hence could not be norm-increasing. Thus, $\scrT$ must be leafless.

        If $\scrT$ had a finite branching index, then by \Cref{prop:characterisation-of-finite-branching-rootless-trees}, we have that $\scrT$ must have a generalised root. Then $S_{\lambda}$ cannot be analytic by \Cref{prop:generalised-root-not-analytic}. The same argument works if some generation of $\scrT$ were finite in view of \Cref{prop:some-finite-generation-implies-generalised-root}. This completes the proof. 
    \end{proof}

    \section{Analytic norm-increasing \texorpdfstring{$4$}{4}-isometries without the wandering subspace property}\label{sec:4-isometry}
    
    In this section, we construct analytic norm-increasing $4$-isometric weighted shifts on rootless directed trees that fail to possess the wandering subspace property. To this end, we consider the rootless quasi-Brownian tree $\scrT_{qb} = ( V,  E)$ of valency $2$ (see \cite[\S 5]{MR5052237}). Its vertex set is $ V = \Z_{\ge 0} \times \Z$, and its edge set is
    \begin{align*}
         E = &\left\{ \left( (0,m), (1, m) \right),  \left( (0,m), (0, m-1) \right) : m \in \Z \right\} \\ & \sqcup \left\{  \left( (n, m) , (n+1, m) \right) : n \ge 1, m \in \Z \right\}.
    \end{align*}
    Observe that $\scrT_{qb}$ meets all the necessary conditions mentioned in \Cref{prop:necessity-of-tree structure}. 
    See \Cref{fig:quasi-brownian-2} for an illustration of this tree.
    
    \begin{figure}[H]
        \centering  
        \includegraphics[width=0.7\textwidth]{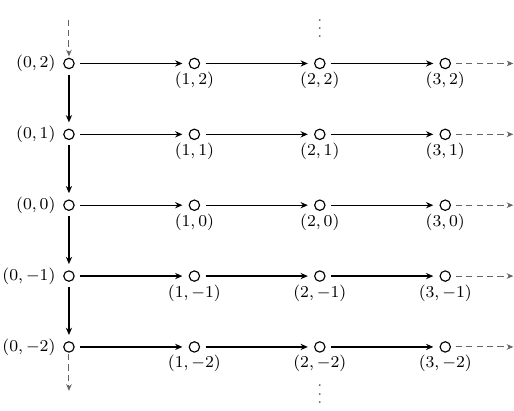}
        \caption{The quasi-Brownian tree of valency $2$}
        \label{fig:quasi-brownian-2}
    \end{figure}
    
    We use the following family of weighted shifts considered by S. Chavan and S. Trivedi in \cite[\S 5]{MR5052237}. Let $\{ p_m \}_{m \in \Z}$ be a bi-infinite sequence of quadratic polynomials with positive coefficients. We define a system of weights on $\scrT_{qb}$ given by
    \begin{equation}
        \label{eq:CS-weights}
        \lambda_{(n,m)}
        =
        \begin{dcases}
            \sqrt{\frac{m}{m+1}},
                & n=0,\quad m\ge 1, \\
            \frac{1}{\sqrt{m}},
                & n=1,\quad m\ge 1, \\
            1,
                & n\in\{0,1\},\quad m<1, \\
            \sqrt{\frac{p_m(n-1)}{p_m(n-2)}},
                & n\ge 2.
        \end{dcases}
    \end{equation}
    
    We state the following result of Chavan and Trivedi concerning the above weighted shifts.

    \begin{proposition}[{\cite[Proposition 5.1]{MR5052237}}]
        \label{prop:Chavan-Trivedi-analytic-not-wsp}
        Let $\{ p_m \}_{m \in \Z}$ be a bi-infinite sequence of quadratic polynomials with positive coefficients such that for all $m \in \Z$,
       \begin{equation}
        \begin{gathered}
            p_m(0) = 1, \\
            \sup_{m \in \mathbb{Z}}
            \left\{ p_m'(0),\, p_m''(0) \right\} < \infty, \\
            \inf_{m \in \mathbb{Z}} p_m''(0) > 0.
        \end{gathered}
        \label{eq:hypothesis-on-polynomials}
    \end{equation}
    Then the weighted shift $S_{\lambda}$ on $\ell ^2 ( V)$ associated with the weight sequence defined in \eqref{eq:CS-weights} is bounded, analytic, norm-increasing, and it does not have a Wold-type decomposition.
    \end{proposition}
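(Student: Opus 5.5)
The plan is to verify the four properties directly from the explicit weights \eqref{eq:CS-weights}, working vertex by vertex with \Cref{lem:JJS-powers} and \Cref{lem:compute-beta-m}. Since $S_\lambda$ will turn out to be analytic, failure of the Wold-type decomposition is equivalent to failure of the wandering subspace property. By \Cref{prop:equivalences-of-WSP} this is in turn equivalent to the Cauchy dual $S_\lambda'$ not being analytic.

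\emph{Step 1: boundedness and norm-increase.} I will compute $\norm{S_\lambda e_v}^2$ for every $v\in V$.
\begin{itemize}
\item For $v=(0,m)$ with $m\ge2$, the children are $(1,m)$ and $(0,m-1)$, so $\norm{S_\lambda e_v}^2=\tfrac1m+\tfrac{m-1}{m}=1$.
\item For $m\le1$, both children carry weight $1$, so $\norm{S_\lambda e_v}^2=2$.
\item For $v=(n,m)$ with $n\ge1$, there is the single child $(n+1,m)$, and $\norm{S_\lambda e_v}^2=p_m(n)/p_m(n-1)$. This is $\ge1$ because $p_m$ has positive coefficients and is therefore increasing on $[0,\infty)$.
\item Writing $p_m(x)=1+a_mx+b_mx^2$, one has $p_m(n)/p_m(n-1)\le 1+\frac{a_m+2b_mn}{1+a_m(n-1)+b_m(n-1)^2}$. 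This is bounded uniformly in $n,m$ by \eqref{eq:hypothesis-on-polynomials}: the numerator coefficients are uniformly bounded, and $b_m=p_m''(0)/2$ is bounded below.
\end{itemize}
Hence $\sup_v\norm{S_\lambda e_v}<\infty$, which gives boundedness. Every $\norm{S_\lambda e_v}\ge1$, and $S_\lambda^*S_\lambda$ is diagonal, so $S_\lambda$ is norm-increasing.

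\emph{Step 2: analyticity.} Take $f\in\calH_\infty(S_\lambda)$; I will show $f(v)=0$ for each $v$.
\begin{itemize}
\item Fix $v=(n,m)$ and $k\ge1$, and write $f=S_\lambda^k g_k$. Then $f$ restricted to $\Chi^{\ip{k}}(\pr^{\ip{k}}(v))$ is $g_k(\pr^{\ip{k}}v)\,S_\lambda^ke_{\pr^{\ip{k}}v}$.
\item Hence $\abs{f(v)}\le \norm{f}\,\abs{\lambda_{\pr^{\ip{k}}v\mid v}}/\norm{S_\lambda^ke_{\pr^{\ip{k}}v}}$.
\item For $k$ large, $\pr^{\ip{k}}v=(0,m+j)$ with $j\to\infty$. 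In that case $\Chi^{\ip{k}}(0,m+j)$ contains the vertices $(i,\,m+j-k+i)$ on the teeth. Along each tooth the products of weights telescope to $p_{\cdot}(\ell)$ times factors like $\tfrac1{m'}$ coming from the first two edges.
\item Summing these contributions, I expect $\norm{S_\lambda^ke_{(0,m+j)}}^2$ to grow, because $p''(0)>0$ gives quadratic growth of $p$. Meanwhile $\lambda_{\pr^{\ip{k}}v\mid v}$ stays bounded, since products of $\sqrt{m'/(m'+1)}$ telescope.
\item The ratio therefore tends to $0$, so $f=0$.
\end{itemize}

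\emph{Step 3: $S_\lambda'$ is not analytic.} The Cauchy dual is again a weighted shift on $\scrT_{qb}$, with weights $\lambda'_v=\lambda_v/\norm{S_\lambda e_{\pr v}}^2$. By \Cref{lem:all-about-L-lambda}-type computations, a vector $f$ lies in $\calH_\infty(S_\lambda')$ once $f=c_k\,S_\lambda'^{\,k}e_{(0,m+k)}$ on the relevant generation for each $k$ with compatible constants. I will therefore define $f$ as the pointwise limit of the normalized vectors $S_\lambda'^{\,k}e_{(0,k)}/\lambda'_{(0,k)\mid(0,0)}$; these are consistent because the tree is a tree. Concretely:
\begin{itemize}
\item The limit is supported on the generation of $(0,0)$.
\item Its values on $(i,-i+\cdot)$ are products of dual weights, in which the polynomial factors now appear inverted, of order $1/p(\ell)\sim \ell^{-2}$.
\item I must show $f\in\ell^2$. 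Then $f\neq0$ lies in $\calH_\infty(S_\lambda')$, and $S_\lambda$ fails the wandering subspace property by \Cref{prop:analytic-wsp-in-terms-of-cauchy-dual}(ii).
\end{itemize}

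The main obstacle is the pair of quantitative estimates in Steps 2 and 3. These are the norm growth of $S_\lambda^k e_{(0,j)}$ and the square-summability of the limit vector for $S_\lambda'$. Both rely on the uniform two-sided control of $p_m$ from \eqref{eq:hypothesis-on-polynomials}, namely $1+cx^2\le p_m(x)\le 1+C(x+x^2)$, fed into the telescoping products of the weights. I would first do these computations with the $1/m$ factors on the first edges explicitly tracked, since those factors decide convergence.
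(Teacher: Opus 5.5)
Your plan is correct and is, in substance, the standard route. The paper imports this statement from Chavan--Trivedi without proof. Its own analogous computation, in \Cref{prop:classify-3-isometries}, applies \Cref{prop:analytic-criterion} to $S_\lambda$ and to $S_\lambda'$ and combines this with \Cref{prop:analytic-wsp-in-terms-of-cauchy-dual}. Your Steps 2 and 3 are hand-made versions of the two directions of that criterion at $v=(0,0)$.

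Concretely, the ratio in Step 2 is
\[
\frac{\lambda_{(0,k)\mid(0,0)}^2}{\norm{S_\lambda^k e_{(0,k)}}^2}=\Bigl(1+\sum_{n=1}^{k}p_n(n-1)\Bigr)^{-1}\longrightarrow 0,
\]
that is, $\alpha_\lambda((0,0))=\infty$. The vector you build in Step 3 is
\[
f=e_{(0,0)}+e_{(1,1)}+\sum_{i\ge 2}\frac{2}{\sqrt{p_i(i-1)}}\,e_{(i,i)},\qquad \norm{f}^2=\alpha_{\lambda'}((0,0))=2+\sum_{i\ge2}\frac{4}{p_i(i-1)}<\infty,
\]
using $\inf_m p_m''(0)>0$. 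So the estimates you call the main obstacle are exact formulas. Three small corrections: the support is $\{(i,i)\}_{i\ge0}$, not $(i,-i+\cdot)$; the $1/\sqrt{i}$ factors from the first edges cancel exactly; and the entries are of order $1/i$, only their squares being of order $i^{-2}$.

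The one step that needs an argument you have not given is the passage from $f\in\ell^2(V)$ to $f\in\calH_\infty(S_\lambda')$. As literally stated, your sufficient condition cannot be met. The vector $S_\lambda'^{\,k}e_{(0,k)}$ is supported on the finite set $\Chi^{\ip{k}}((0,k))=\{(i,i):0\le i\le k\}$, whereas $f$ is not finitely supported.

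Use closedness of the ranges instead. $S_\lambda'$ is left-invertible, since $S_\lambda'^{*}S_\lambda'=(S_\lambda^*S_\lambda)^{-1}$, so each range $S_\lambda'^{\,k}(\ell^2(V))$ is closed. The truncations $f_N:=S_\lambda'^{\,N}e_{(0,N)}/\lambda'_{(0,N)\mid(0,0)}$ are precisely the restrictions of $f$ to $\{(i,i):0\le i\le N\}$. They lie in $S_\lambda'^{\,N}(\ell^2(V))\subseteq S_\lambda'^{\,k}(\ell^2(V))$ for $N\ge k$, and they converge to $f$ in norm because $f\in\ell^2(V)$. Hence $f\in\calH_\infty(S_\lambda')\setminus\{0\}$.

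Alternatively, you can write the preimage explicitly:
\[
g_k=\frac{e_{(0,k)}}{\lambda'_{(0,k)\mid(0,0)}}+\sum_{i>k}\frac{f((i,i))}{\lambda'_{(i-k,i)\mid(i,i)}}\,e_{(i-k,i)},\qquad \norm{g_k}^2=\frac{1}{(\lambda'_{(0,k)\mid(0,0)})^2}+\sum_{i>k}\frac{4}{p_i(i-k-1)}<\infty .
\]
This again uses $\inf_m p_m''(0)>0$. With either of these added, your argument proves the proposition.
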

    
    Since $S_{\lambda}$ is analytic, its hyper-range is trivial. Hence, for $S_{\lambda}$, the existence of Wold-type decomposition is equivalent to the wandering subspace property. Thus the weighted shift in the preceding proposition fails to possess the wandering subspace property.
    
    In the next result, we classify all weighted shifts $S_{\lambda}$ on $\ell ^2 ( V)$ which are $4$-isometries.
    
    \begin{theorem}
        \label{thm:classify-4-isometries}
        Let $p_m (t) =  1 + a_m t + b_m t^2, \quad m \in \Z$, where $a_m, b_m > 0$,
        \begin{equation}
            \sup_{m \in \Z} a_m < \infty, \quad \sup_{m \in \Z} b_m < \infty, \quad \inf_{m \in \Z} b_m > 0,
            \label{eqn:conditions-a-m-b-m}
        \end{equation}
        and let $S_\lambda$ be the weighted shift on $\scrT_{qb}$ with the system of weights in \eqref{eq:CS-weights}. Then the following are equivalent:
        \begin{enumerate}
            \item[(i)] $S_{\lambda}$ is a $4$-isometry;
            \item[(ii)] $\beta_4 (S_{\lambda}) e_{(0,m)} = 0$ for each $m \in \Z$;
            \item[(iii)] there exists $d \in \R$ such that for all $m \in \Z$,
            \begin{equation}
                \label{eq:4-isometry-characterisation}
                a_{m-1} - a_m + b_{m-1} + b_{m} = d + \delta_2 (m) - \delta_3 (m).
            \end{equation}
        \end{enumerate}
        Whenever either $(i)$, $(ii)$ or $(iii)$ holds, then $2 \inf_{m \in \Z} b_m \le d \le 2 \sup_{m \in \Z} b_m$, so $d > 0$. Moreover, $S_{\lambda}$ is bounded, analytic, and norm-increasing, does not possess the wandering subspace property, and is never $3$-concave. In particular, it is not a $3$-isometry, and hence is a strict 4-isometry.
    \end{theorem}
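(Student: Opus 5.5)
The plan is to compute the moment sequences $k\mapsto\norm{S_\lambda^k e_v}^2$ explicitly and to use \Cref{lem:compute-beta-m}, which reduces everything to vertexwise scalar identities. The key observation is that for vertices $(n,m)$ with $n\ge 1$, the descendants form a single chain. By \Cref{lem:JJS-powers} and the telescoping product in \eqref{eq:CS-weights}, one gets $\norm{S_\lambda^k e_{(n,m)}}^2 = p_m(n+k-1)/p_m(n-1)$ (for $n=1$ this reads $p_m(k)$, using $p_m(0)=1$). This is a quadratic polynomial in $k$. Its third forward difference vanishes, so $\beta_3(S_\lambda)e_{(n,m)}=0$, and hence $\beta_4(S_\lambda)e_{(n,m)}=0$, for all $n\ge1$. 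Therefore (i)$\Leftrightarrow$(ii) follows immediately from \Cref{lem:compute-beta-m}.

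For (ii)$\Leftrightarrow$(iii), set $f_m(k):=\norm{S_\lambda^k e_{(0,m)}}^2$. Since $\Chi((0,m))=\{(1,m),(0,m-1)\}$, \Cref{lem:JJS-powers} gives the recursion
\begin{equation*}
f_m(k+1) = \lambda_{(1,m)}^2\, p_m(k) + \lambda_{(0,m-1)}^2 f_{m-1}(k), \qquad f_m(0)=1.
\end{equation*}
Write $\Delta$ for the forward difference. The operator $\Delta^4$ evaluated at $0$ can be expressed through the shifted sequence: $\Delta^4 f_m(0)=\Delta^3 f_m(1)-\Delta^3 f_m(0)$. Because $\Delta^3 p_m\equiv0$, the recursion gives $\Delta^3 f_m(1)=\lambda_{(0,m-1)}^2\Delta^3 f_{m-1}(0)$. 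Hence condition (ii) is equivalent to
\begin{equation*}
\Delta^3 f_m(0)=\lambda_{(0,m-1)}^2\,\Delta^3 f_{m-1}(0)\quad\text{for all } m.
\end{equation*}
I then compute $\Delta^3 f_m(0)$ explicitly. Unwinding the recursion two or three levels produces an expression in $a_m,b_m,a_{m-1},b_{m-1}$ together with the explicit weights $\lambda_{(1,m)}^2=1/m$ and $\lambda_{(0,m-1)}^2=(m-1)/m$ for $m\ge 2$, and all weights equal to $1$ for $m<1$.

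After multiplying by the natural normalising factor (essentially $m$ for $m\ge1$, and $1$ otherwise), the relation $\Delta^3 f_m(0)=\lambda_{(0,m-1)}^2\Delta^3 f_{m-1}(0)$ should say that a certain quantity is independent of $m$. That quantity should be $a_{m-1}-a_m+b_{m-1}+b_m$, corrected at the transition indices $m=2,3$, where the weights switch regime; these corrections are the terms $\delta_2(m)-\delta_3(m)$. The constant value is $d$. This bookkeeping near $m\in\{1,2,3\}$ is the step I expect to be the main obstacle: one must track exactly which weights fall in which case of \eqref{eq:CS-weights}. I would carry it out by writing $f_m(1),f_m(2),f_m(3)$ explicitly for $m\le 0$, $m=1,2,3$, and $m\ge4$ separately.

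For the bound on $d$, I sum \eqref{eq:4-isometry-characterisation} over $m=-N,\dots,N$. The $a$-terms telescope to $a_{-N-1}-a_N$, which is bounded by \eqref{eqn:conditions-a-m-b-m}, and the Kronecker terms contribute $0$. Dividing by $2N+1$ and letting $N\to\infty$ gives $d=\lim \frac{1}{2N+1}\sum_{m=-N}^{N}(b_{m-1}+b_m)$, which lies in $[2\inf b_m,\,2\sup b_m]$; in particular $d>0$. The remaining properties follow from earlier results. Since $p_m(0)=1$, $p_m'(0)=a_m$ and $p_m''(0)=2b_m$, the hypotheses \eqref{eq:hypothesis-on-polynomials} hold, so \Cref{prop:Chavan-Trivedi-analytic-not-wsp} gives that $S_\lambda$ is bounded, analytic and norm-increasing, with no Wold-type decomposition. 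Since $S_\lambda$ is analytic, this is the same as failing the wandering subspace property. Finally, if $S_\lambda$ were $3$-concave, \Cref{thm:3-concave} would give it a Wold-type decomposition, a contradiction. Hence $S_\lambda$ is not $3$-concave, in particular not a $3$-isometry, and so it is a strict $4$-isometry.
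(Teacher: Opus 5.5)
Your proposal is correct. For boundedness, analyticity, failure of the wandering subspace property, for (i)$\Leftrightarrow$(ii), and for the recursion at the vertices $(0,m)$, it follows the paper. It departs from the paper in two places.

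\textbf{(ii)$\Leftrightarrow$(iii).} You use $\Delta^4 f_m(0)=\lambda_{(0,m-1)}^2\Delta^3 f_{m-1}(0)-\Delta^3 f_m(0)$. This is the diagonal form of $\beta_4(S_\lambda)=S_\lambda^*\beta_3(S_\lambda)S_\lambda-\beta_3(S_\lambda)$ together with $\beta_3(S_\lambda)e_{(1,m)}=0$. It needs only $f_m(1),f_m(2),f_m(3)$. The paper instead computes $\norm{S_\lambda^4e_{(0,m)}}^2$ in five regimes and then rearranges a second-difference identity into a constancy statement.

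The step you left as ``should'' does go through. In every regime one gets
\begin{equation*}
\Delta^3 f_m(0)=\lambda_{(1,m)}^2\bigl(a_{m-1}-a_m+b_{m-1}+b_m-\delta_2(m)+\delta_3(m)\bigr).
\end{equation*}
Since $\lambda_{(1,m)}^2=\lambda_{(0,m-1)}^2\lambda_{(1,m-1)}^2$ for every $m\in\Z$, your relation becomes exactly constancy of the bracket. You should write this computation out, since it is where the corrections $\delta_2(m)-\delta_3(m)$ are actually determined.

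\textbf{Non-$3$-concavity.} You combine \Cref{thm:3-concave} with the failure of Wold-type decomposition from \Cref{prop:Chavan-Trivedi-analytic-not-wsp}. The paper argues directly: $\beta_3(S_\lambda)\le 0$ at $(0,m)$ for $m\ge4$ forces $a_m-a_{m-1}\ge 2\inf_k b_k$, which contradicts boundedness of $\{a_m\}$.

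Your argument is shorter and explains conceptually why $3$-concavity must fail for any such counterexample. However, it rests on all of \Cref{sec:positive-theorem} and on the non-Wold half of Chavan--Trivedi's result. The paper's argument is elementary and independent of both. It is also available to you for free from the $\beta_3$ formula above, since $\Delta^3 f_m(0)$ is the $(0,m)$ diagonal entry of $\beta_3(S_\lambda)$.

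\textbf{Bound on $d$.} Your symmetric-window averaging is equivalent to the paper's summation over $m\ge4$.
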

    
    \begin{proof}
        We first show that $S_{\lambda}$ is bounded, analytic, and norm-increasing and does not possess the wandering subspace property. Since $p_m ' (0) = a_m$ and $p_m '' (0) = 2 b_m$, the conditions \eqref{eqn:conditions-a-m-b-m} imply the conditions \eqref{eq:hypothesis-on-polynomials} in \Cref{prop:Chavan-Trivedi-analytic-not-wsp}. Hence, by \Cref{prop:Chavan-Trivedi-analytic-not-wsp}, $S_\lambda$ is bounded, analytic, norm-increasing and does not have the wandering subspace property.
    
        We proceed to prove the characterisation of $4$-isometries. For $n \in \Z_{\ge 1}$ and $j \in \Z_{\ge 0}$, $\Chi ^{\langle j \rangle} \left( (n,m) \right) = \{ (n+j, m) \}$. Therefore, by \Cref{lem:JJS-powers}, we have
        \begin{equation*}
            \norm{S_{\lambda}^{j} e_{(n,m)}} ^{2} = \prod_{k=1}^{j} \lambda_{(n+k, m)} ^{2} = \frac{p_m (n+j-1)}{p_m (n-1)}, \qquad n \in \Z_{\ge 1}, j \in \Z_{\ge 0}, m \in \Z.
        \end{equation*}
        We observe that the right hand side of the previous equality is a polynomial of degree at most $2$. Therefore, using \Cref{lem:compute-beta-m}, we have that $\beta_4 (S_{\lambda}) e_{(n,m)} = 0$ for every $n \in \Z_{\ge1}$ and $m \in \Z$. It follows that $S_\lambda$ is a $4$-isometry if and only if $\beta_4 (S_{\lambda}) e_{(0,m)} = 0$ for each $m \in \Z$.
    
        We first observe that for $j \in \Z_{\ge 1}$ and $m \in \Z$, $\Chi^{\langle j\rangle}((0,m)) = \Chi^{\langle j-1\rangle}((0,m-1)) \sqcup\{(j,m)\}.$ Using this equality and \Cref{lem:JJS-powers}, we have for $j \in \Z_{\ge 1}$ and $m \in \Z$,
            \begin{equation*}
                \begin{aligned}
                \norm{S_\lambda^j e_{(0,m)}}^2 &= \sum_{v\in\Chi^{\langle j-1\rangle}((0,m-1))} \lambda_{(0,m)\mid v}^{2} +\lambda_{(0,m)\mid(j,m)}^{\,2} \\
                &= \lambda_{(0,m-1)}^2 \sum_{v\in\Chi^{\langle j-1\rangle}((0,m-1))} \lambda_{(0,m-1)\mid v}^{\,2} + \prod_{k=1}^{j}\lambda_{(k,m)}^2 \\
                &= \lambda_{(0,m-1)}^2 \norm{S_\lambda^{j-1}e_{(0,m-1)}}^2 + \prod_{k=1}^{j}\lambda_{(k,m)}^2,
                \end{aligned}
            \end{equation*}
        and we also have
            \begin{equation*}
            \prod_{k=1}^{j}\lambda_{(k,m)}^2 = \lambda_{(1,m)}^2 \prod_{k=2}^{j}\frac{p_m(k-1)}{p_m(k-2)} = \lambda_{(1,m)}^2p_m(j-1).
            \end{equation*}
        Hence, for all $j \in \Z_{\ge 1}$ and $m \in \Z$, we have
        \begin{equation}
            \norm{S_\lambda^j e_{(0,m)}}^2 = \lambda_{(0,m-1)}^2 \norm{S_\lambda^{j-1}e_{(0,m-1)}}^2 + \lambda_{(1,m)}^2p_m(j-1).
            \label{eq:branch-rec}
        \end{equation}
    
        By using equation \eqref{eq:branch-rec} and the system of weights defined in equation \eqref{eq:CS-weights}, we have
        \begin{equation*}
            \norm{S_\lambda e_{(0,m)}}^2 =
            \begin{cases}
                1, & m\geq2,\\
                2, & m\leq1,
            \end{cases}
        \end{equation*}
        and
        \begin{equation*}
            \norm{S_\lambda^2 e_{(0,m)}}^2 =
                \begin{dcases}
                    \frac{m-1+p_m(1)}{m}, & m\geq3, \\
                    \frac{2+p_2(1)}{2}, & m=2, \\
                    2+p_m(1), & m\leq1.
                \end{dcases}
        \end{equation*}
        Similarly,
        \begin{equation*}
            \norm{S_\lambda^3 e_{(0,m)}}^2 =
                \begin{dcases}
                    \frac{m-2+p_{m-1}(1)+p_m(2)}{m}, & m\geq4,\\
                    \frac{2+p_2(1)+p_3(2)}{3}, & m=3,\\
                    \frac{2+p_1(1)+p_2(2)}{2}, & m=2,\\
                    2+p_{m-1}(1)+p_m(2), & m\leq1,
                \end{dcases}
        \end{equation*}
        and
        \begin{equation*}
        \norm{S_\lambda^4 e_{(0,m)}}^2 =
            \begin{dcases}
                \frac{m-3+p_{m-2}(1)+p_{m-1}(2)+p_m(3)}{m}, & m\geq5,\\
                \frac{2+p_2(1)+p_3(2)+p_4(3)}{4}, & m=4,\\
                \frac{2+p_1(1)+p_2(2)+p_3(3)}{3}, & m=3,\\
                \frac{2+p_0(1)+p_1(2)+p_2(3)}{2}, & m=2,\\
                2+p_{m-2}(1)+p_{m-1}(2)+p_m(3), & m\leq1.
            \end{dcases}
        \end{equation*}
    
        Since for $m \in \Z$, we have
        \begin{equation*}
            p_m(1)=1+a_m+b_m,\qquad
            p_m(2)=1+2a_m+4b_m,\qquad
            p_m(3)=1+3a_m+9b_m,
        \end{equation*}
        it follows by \Cref{lem:compute-beta-m} and the preceding computations that 
        \begin{equation*}
            \begin{aligned}
            \beta_4(S_\lambda)e_{(0,m)} =& \lambda_{(1,m)}^2 \big( a_{m-2}-2a_{m-1}+a_m+b_{m-2}-b_m\\
                                        &\hspace{24mm} +\delta_2(m)-2\delta_3(m)+\delta_4(m) \big)e_{(0,m)}.
            \end{aligned}
        \end{equation*}
    
        Since $\lambda_{(1,m)}>0$ for each $m \in \Z$, condition $(ii)$ of the statement of this theorem is equivalent to
            \begin{equation*}
                a_{m-2}-2a_{m-1}+a_m+b_{m-2}-b_m +\delta_2(m)-2\delta_3(m)+\delta_4(m)=0, \qquad m \in \Z.
            \end{equation*}
    
        The previous equation is equivalent to
            \begin{equation*}
                \begin{aligned}
                    &a_{m-1}-a_m+b_{m-1}+b_m-\delta_2(m)+\delta_3(m)\\
                    &\qquad = a_{m-2}-a_{m-1}+b_{m-2}+b_{m-1} -\delta_2(m-1)+\delta_3(m-1), \qquad m \in \Z,
                \end{aligned}
            \end{equation*}
        where we have used $\delta_2(m-1)=\delta_3(m)$ and $\delta_3(m-1)=\delta_4(m)$ for all $m \in \Z$.
    
        Hence, the map $m\in\Z\mapsto a_{m-1}-a_m+b_{m-1}+b_m-\delta_2(m)+\delta_3(m)$ is constant. Therefore, there exists $d\in\R$ such that
            \begin{equation*}
                a_{m-1}-a_m+b_{m-1}+b_m = d+\delta_2(m)-\delta_3(m),\qquad m\in\Z.
            \end{equation*}
        This completes the proof of the equivalences of $(ii)$ and $(iii)$, and hence, of $(i)$, $(ii)$ and $(iii)$.
    
        Now, suppose that one of the equivalent conditions $(i)$, $(ii)$ or $(iii)$ holds. Then by item $(iii)$, we have for $m \in \Z_{\ge 4}$ that $d= a_{m-1} - a_{m} + b_{m-1} + b_{m}$. Consequently, for $n \in \Z_{\ge 4}$, we have 
        \begin{equation*}
            (n-3) d = a_3 - a_{n} + \sum_{k=4}^{n} (b_{k-1} + b_k).
        \end{equation*}
        Dividing both sides by $n-3$ and using the fact that $\{ a_m \}_{m \in \Z}$ is bounded, we have by letting $n \to \infty$ that  $2\inf_{m\in\Z}b_m \le d \le 2\sup_{m\in\Z}b_m$. Consequently, $d > 0$.
        
        We proceed to show that $S_\lambda$ is never $3$-concave. Again, it follows by \Cref{lem:compute-beta-m} and the earlier computations that 
            \begin{equation*}
                \beta_3(S_\lambda)e_{(0,m)} = \lambda_{(1,m)}^2 \left( a_{m-1}-a_m+b_{m-1}+b_m -\delta_2(m)+\delta_3(m) \right)e_{(0,m)}.
            \end{equation*}
        If $S_\lambda$ were $3$-concave, then $\beta_3(S_\lambda)\leq0$. Consequently, for each $m \in \Z_{\ge 4}$, we have $a_{m-1}-a_m+b_{m-1}+b_m\le0$. Thus, $a_m-a_{m-1} \ge b_{m-1}+b_m \ge 2\inf_{k\in\Z}b_k$ for each $m \in \Z_{\ge 4}$. Taking sums, we have for $n \in \Z_{\ge 4}$, that $a_n \geq a_3+2(n-3)\inf_{k\in\Z}b_k$. This would contradict the fact that $\{ a_m \}_{m \in \Z}$ is bounded. Thus, $S_{\lambda}$ cannot be $3$-concave. In particular, it cannot be a $3$-isometry. Hence, whenever $(i), (ii)$ or $(iii)$ holds, $S_{\lambda}$ must be a strict $4$-isometry.
    \end{proof}

    In the following corollary, we classify all weighted shifts on $\scrT_{qb}$ with the system of weights as in \eqref{eq:CS-weights} where the sequence $\{ b_m \}_{m \in \Z}$ is a constant.

    \begin{corollary}
        \label{cor:classify-4-isometries}
        Let $p_m (t) =  1 + a_m t + b t^2, \; m \in \Z$, where $a_m, b > 0$, $\sup_{m \in \Z} a_m < \infty$ and let $S_\lambda$ be the weighted shift on $\scrT_{qb}$ with the system of weights in \eqref{eq:CS-weights}. Then the following are equivalent:
        \begin{enumerate}
            \item[(i)] $S_{\lambda}$ is a $4$-isometry;
            \item[(ii)] there exists $a > 1$ such that for all $m \in \Z$,
            \begin{equation*}
                a_m = \begin{cases}
                    a, & m \ne 2, \\
                    a-1, & m = 2.
                \end{cases}
                \label{eq:almost-constant-a}
            \end{equation*}
        \end{enumerate}
        Moreover, $S_{\lambda}$ is bounded, analytic, and norm-increasing, does not possess the wandering subspace property, and is never $3$-concave. In particular, it is not a $3$-isometry whenever (i) or (ii) holds, it is a strict $4$-isometry.
    \end{corollary}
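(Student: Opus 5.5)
The corollary follows from \Cref{thm:classify-4-isometries} applied with $b_m = b$ for every $m$. The hypotheses \eqref{eqn:conditions-a-m-b-m} hold: $\sup a_m<\infty$ is assumed and $\inf b_m=\sup b_m=b>0$. So $S_\lambda$ is a $4$-isometry if and only if there is $d\in\R$ with
\begin{equation*}
a_{m-1}-a_m+2b = d+\delta_2(m)-\delta_3(m), \qquad m\in\Z.
\end{equation*}
The theorem also gives $2b\le d\le 2b$, so $d=2b$. The condition therefore reduces to $a_{m-1}-a_m=\delta_2(m)-\delta_3(m)$ for all $m\in\Z$. The ``moreover'' part (bounded, analytic, norm-increasing, no wandering subspace property, never $3$-concave, strict $4$-isometry) is quoted directly from the theorem.

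For (i)$\Rightarrow$(ii), I read the recursion $a_m=a_{m-1}-\delta_2(m)+\delta_3(m)$ for $m\in\Z$. For $m\le 1$ and $m\ge 4$ it gives $a_m=a_{m-1}$, so $a_m$ equals a constant $a$ for all $m\le 1$. At $m=2$ it gives $a_2=a_1-1=a-1$. At $m=3$ it gives $a_3=a_2+1=a$, and then $a_m=a$ for all $m\ge 3$. Positivity of $a_2$ forces $a-1>0$, i.e.\ $a>1$.

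For (ii)$\Rightarrow$(i), I check directly that this sequence satisfies $a_{m-1}-a_m=\delta_2(m)-\delta_3(m)$ for every $m$. The only nonzero cases are $m=2$, where $a-(a-1)=1$, and $m=3$, where $(a-1)-a=-1$. Hence condition (iii) of \Cref{thm:classify-4-isometries} holds with $d=2b$. Also $a_m>0$ and $\sup a_m<\infty$, so the theorem applies.

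The only subtle step is pinning down $d=2b$, and the theorem's bound handles it. If one prefers to argue directly, note that for $m\ge 4$ we have $a_{m-1}-a_m=d-2b$. A nonzero constant difference would make $a_m$ unbounded or eventually negative, contradicting $0<a_m$ and $\sup a_m<\infty$.
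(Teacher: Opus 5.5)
Your proposal is correct and follows essentially the same route as the paper: apply \Cref{thm:classify-4-isometries} with $b_m\equiv b$, use the bound $2b\le d\le 2b$ to force $d=2b$, and then solve $a_{m-1}-a_m=\delta_2(m)-\delta_3(m)$ at $m=2$, at $m=3$, and at all other $m$. You also make explicit that $a>1$ follows from $a_2=a-1>0$, which the paper leaves implicit.
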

    \begin{proof}
        If item (ii) holds, then \eqref{eq:4-isometry-characterisation} holds with $d=2b$. Hence, $S_\lambda$ is a $4$-isometry by \Cref{thm:classify-4-isometries}.

        Conversely, suppose $S_{\lambda}$ is a 4-isometry. Then we have that $2b \le d \le 2b$ by \Cref{thm:classify-4-isometries}. Hence, $d=2b$. Define $a := a_1$. Setting $m=2$ in  Equation \eqref{eq:4-isometry-characterisation}, we have that $a_2 = a-1$. Setting $m=3$, we get $a_3 = a$. For any $m \ne 2$, we have $a_m = a$.

        The other conclusions are immediate from \Cref{thm:classify-4-isometries}.
    \end{proof}

    \begin{example}
        Consider the following sequence of polynomials
        \begin{equation*}
            p_m (t) = \begin{cases}
                (1+t)^2, & m\ne 2, \\
                1+t+t^2, & m = 2.
            \end{cases}
        \end{equation*}
        By \Cref{cor:classify-4-isometries}, we have that the weighted shift $S_{\lambda}$ on $\scrT_{qb}$ with the system of weights in \eqref{eq:CS-weights} is a bounded, analytic, norm-increasing $4$-isometry, does not possess the wandering subspace property, and is never $3$-concave.
    \end{example}

    The aforementioned corollary proves \Cref{thm:counterexample} which we state here again for convenience.
    
    \mConcaveCounterexample*

\section{Analytic \texorpdfstring{$3$}{3}-isometries without the wandering subspace property}\label{sec:3-isometry}

    In this section, we construct a family of analytic $3$-isometries on the tree $\scrT_{qb}$ which do not possess the wandering subspace property.  We make the following remark which follows immediately in view of \Cref{prop:finite-codimension,prop:generalised-root-not-analytic}.
    
    \begin{remark}
        If $\scrT = (V,E)$ is a directed tree and $S_{\lambda}$ is an analytic weighted shift with positive weights and $\dim \ker S_{\lambda} ^{*} < \infty$ then $\scrT$ must be rooted.
        \label{remark:analytic-finite-codimension-rooted}
    \end{remark}
    
    Before we state and prove the next proposition, we state a result due to S. Chavan and S. Trivedi which gives a criteria when a weighted shift is analytic on a left-invertible rootless directed tree.  Given any rootless directed tree $\scrT = (V, E)$, we define for $v \in V$,
    \begin{equation*}
        A(v,n) := \begin{dcases}
            \{v\}, & n=0, \\
            \Chi^{\langle n\rangle} \left(\pr^{\langle n\rangle}(v)\right) \setminus \Chi^{\langle n-1\rangle} \left(\pr^{\langle n-1\rangle}(v)\right), & n\in\mathbb Z_{\geq 1}.
        \end{dcases}
    \end{equation*}
    and for a system $\lambda=\{\lambda_u\}_{u\in V}$ of positive weights, define
    \begin{equation*}
        \alpha_\lambda(v) := \sum_{n=0}^{\infty}\sum_{u\in A(v,n)} \left( \frac{\lambda_{\pr^{\langle n\rangle}(v)|u}} {\lambda_{\pr^{\langle n\rangle}(v)|v}} \right)^2.
    \end{equation*}

    \begin{proposition}[{\cite[Corollary 3.4]{MR5052237}}] \label{prop:3-isometry-characterisation}
        Let $\scrT = (V, E)$ be a leafless rootless directed tree, and let $S_{\lambda} \in \calB (\ell ^2 (V))$ be a left-invertible weighted shift with positive weights. Then $S_{\lambda}$ is analytic if and only if $\alpha_{\lambda} (v) = \infty$ for some, or equivalently, for every $v \in V$.
        \label{prop:analytic-criterion}
    \end{proposition}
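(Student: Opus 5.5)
The plan is to compute the distance from each basis vector $e_v$ to $\calH_\infty(S_\lambda)$ explicitly, and to show that this distance is governed by the partial sums of $\alpha_\lambda(v)$.

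\emph{Step 1: orthogonal structure of the ranges.} Since $S_\lambda$ is left-invertible, each range $S_\lambda^n(\ell^2(V))$ is closed. It is the closed span of $\{S_\lambda^n e_w : w\in V\}$. By \Cref{lem:JJS-powers}, $S_\lambda^n e_w$ is supported on $\Chi^{\ip{n}}(w)$, and these sets are pairwise disjoint for distinct $w$. Hence
\begin{equation*}
S_\lambda^n(\ell^2(V))=\bigoplus_{w\in V}\C\, S_\lambda^n e_w .
\end{equation*}
Let $P_n$ denote the orthogonal projection onto this range, and put $w=\pr^{\ip{n}}(v)$. Only the summand indexed by $w$ meets $e_v$, so
\begin{equation*}
\norm{P_n e_v}^2=\frac{\lambda_{w\mid v}^2}{\norm{S_\lambda^n e_w}^2}
=\Bigl(\sum_{u\in\Chi^{\ip{n}}(w)}\bigl(\lambda_{w\mid u}/\lambda_{w\mid v}\bigr)^2\Bigr)^{-1}.
\end{equation*}

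\emph{Step 2: identify the denominator with a partial sum of $\alpha_\lambda(v)$.} The set $\Chi^{\ip{n}}(\pr^{\ip{n}}(v))$ is the disjoint union $\bigsqcup_{k=0}^n A(v,k)$. For $u\in A(v,k)$, the paths from $\pr^{\ip{n}}(v)$ to $u$ and to $v$ share the segment from $\pr^{\ip{n}}(v)$ to $\pr^{\ip{k}}(v)$. Cancelling these common factors gives
\begin{equation*}
\lambda_{\pr^{\ip{n}}(v)\mid u}/\lambda_{\pr^{\ip{n}}(v)\mid v}=\lambda_{\pr^{\ip{k}}(v)\mid u}/\lambda_{\pr^{\ip{k}}(v)\mid v}.
\end{equation*}
Hence $\norm{P_ne_v}^2=1/\alpha_n(v)$, where $\alpha_n(v)$ is the $n$-th partial sum in the definition of $\alpha_\lambda(v)$.

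\emph{Step 3: pass to the limit.} The ranges $S_\lambda^n(\ell^2(V))$ are decreasing closed subspaces, so $P_n$ converges strongly to the projection $P_\infty$ onto $\calH_\infty(S_\lambda)$. Therefore $\norm{P_\infty e_v}^2=1/\alpha_\lambda(v)$, with the convention $1/\infty=0$. It follows that $S_\lambda$ is analytic if and only if $P_\infty e_v=0$ for every $v$, that is, if and only if $\alpha_\lambda(v)=\infty$ for every $v\in V$.

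\emph{Step 4: ``some'' implies ``every''.} This is the step needing care. I would show that $\alpha_\lambda(v)<\infty$ if and only if $\alpha_\lambda(\pr(v))<\infty$. Take $w=\pr^{\ip{n}}(\pr(v))$. From Step 1,
\begin{equation*}
\alpha_{n+1}(v)=\frac{\norm{S_\lambda^{n+1}e_w}^2}{\lambda_{w\mid\pr(v)}^2\lambda_v^2}.
\end{equation*}
Since $S_\lambda$ is bounded and bounded below by some $c>0$, we have $c^2\norm{S_\lambda^ne_w}^2\le\norm{S_\lambda^{n+1}e_w}^2\le\norm{S_\lambda}^2\norm{S_\lambda^ne_w}^2$. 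Consequently $\alpha_{n+1}(v)$ and $\alpha_n(\pr(v))$ are comparable, with constants depending only on $c$, $\norm{S_\lambda}$ and $\lambda_v>0$. Finally, any two vertices of a rootless directed tree have a common ancestor (\cite[Proposition 2.1.4]{MR2919910}). So finiteness or infiniteness of $\alpha_\lambda$ propagates from one vertex to all, and this finishes the argument.
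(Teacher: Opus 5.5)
Your proof is correct and complete. Note that the paper gives no proof of this statement. It quotes it from \cite[Corollary~3.4]{MR5052237} and uses it as a black box, for instance in the proof of \Cref{prop:classify-3-isometries}. So your proposal is a self-contained proof rather than an alternative to an argument in the paper.

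The individual steps check out:
\begin{itemize}
\item In Step~1, left-invertibility makes each $S_\lambda^n(\ell^2(V))$ closed. It is therefore the orthogonal sum of the lines $\C\, S_\lambda^n e_w$, and only $w=\pr^{\ip{n}}(v)$ contributes to $P_ne_v$.
\item In Step~2, the sets $\Chi^{\ip{k}}(\pr^{\ip{k}}(v))$ are nested, which gives the disjoint decomposition into the $A(v,k)$. Cancelling the common factor $\lambda_{\pr^{\ip{n}}(v)\mid\pr^{\ip{k}}(v)}$ is legitimate because the weights are positive.
\item In Step~3, decreasing projections converge strongly to the projection onto the intersection of their ranges, which is $\calH_\infty(S_\lambda)$.
\item In Step~4, the recursion $\lambda_{w\mid v}=\lambda_{w\mid\pr(v)}\lambda_v$ together with $c\norm{x}\le\norm{S_\lambda x}\le\norm{S_\lambda}\norm{x}$ gives, after letting $n\to\infty$,
\[
\frac{c^2}{\lambda_v^2}\,\alpha_\lambda(\pr(v))\;\le\;\alpha_\lambda(v)\;\le\;\frac{\norm{S_\lambda}^2}{\lambda_v^2}\,\alpha_\lambda(\pr(v)).
\]
Propagating this up to a common ancestor and back down settles the ``some versus every'' part. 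The upward step uses only boundedness; the downward step uses the lower bound $c$.
\end{itemize}

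Your route also proves more than the stated criterion. It gives the exact identity $\norm{P_{\calH_\infty(S_\lambda)}e_v}^2=1/\alpha_\lambda(v)$, with $1/\infty=0$, and a quantitative comparison of $\alpha_\lambda$ at neighbouring vertices. It needs only \Cref{lem:JJS-powers}, closedness of the ranges, and the common-ancestor property of directed trees.
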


    \begin{proposition}
        \label{prop:classify-3-isometries}
        Let $p (t) =  1 + a t + b t^2, \quad a, b >0$ and let $c,d >0$. Consider the weighted shift $S_{\lambda}$ on $\scrT_{qb}$ with the system of weights
            \begin{equation}
        \label{eq:CS-weights-3-isometry}
        \begin{gathered}
            \lambda_{(n,m)} = \begin{dcases}
                c, & n=0, \quad\quad m\in \Z, \\
                d, & n=1, \quad\quad m\in \Z, \\ 
                \sqrt{\frac{p(n-1)}{p(n-2)}}, & n \in \Z_{\ge 2},\quad m \in \Z.
            \end{dcases}
        \end{gathered}
    \end{equation}
    Then $S_{\lambda}$ is bounded and left-invertible. The following statements hold:
    \begin{enumerate}
        \item[\rm (i)] \begin{equation*}
            \inf_{v \in V} \norm{S_{\lambda} e_v}^{2} = \min \{ c^2 + d^2, 1 \}.
        \end{equation*}
        \item[\rm (ii)] $S_{\lambda}$ is norm-increasing if and only if $c^2 + d^2 \ge 1$.
        \item[\rm (iii)] $S_\lambda$ is analytic if and only if $c \le 1$.
        \item[\rm (iv)] $S_{\lambda}$ posseses the wandering subspace property if and only if $c^2 + d^2 > c$.
        \item[\rm (v)] $S_{\lambda}$ is a $3$-isometry if and only if
        \begin{equation*}
            d^2 ((1-c^2)a - (1+c^2)b) = (1-c^2)^{2} (c^2 + d^2 -1).
        \end{equation*}
        Moreover, whenever $S_\lambda$ is a $3$-isometry, it is strict $3$-isometry.
    \end{enumerate}
    \end{proposition}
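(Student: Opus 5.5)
The plan is to work throughout with the two kinds of vertices of $\scrT_{qb}$: the spine vertices $(0,m)$ and the branch vertices $(n,m)$ with $n\ge 1$. By \Cref{lem:JJS-powers} all moments $\norm{S_\lambda^j e_v}^2$ are explicit.

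\textbf{Norms, boundedness and items (i), (ii).} For $n\ge 1$ we have $\Chi^{\ip{j}}((n,m))=\{(n+j,m)\}$, and the same telescoping as in the proof of \Cref{thm:classify-4-isometries} gives
\begin{equation*}
\norm{S_\lambda^j e_{(n,m)}}^2=\frac{p(n+j-1)}{p(n-1)}.
\end{equation*}
In particular $\norm{S_\lambda e_{(n,m)}}^2=p(n)/p(n-1)$. This quantity is $>1$, bounded, and tends to $1$. For the spine, $\Chi((0,m))=\{(1,m),(0,m-1)\}$, so $\norm{S_\lambda e_{(0,m)}}^2=c^2+d^2$. Hence $\sup_v\norm{S_\lambda e_v}<\infty$, which gives boundedness. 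The infimum equals $\min\{c^2+d^2,1\}>0$, which gives left-invertibility and (i). Item (ii) then follows from \Cref{lem:compute-beta-m} with $m=1$.

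\textbf{Item (iii).} I would apply \Cref{prop:analytic-criterion} at $v=(0,0)$. Here $\pr^{\ip{n}}(v)=(0,n)$, and the new vertex set is $A(v,n)=\{(n,n)\}$ for $n\ge 1$. Its weight ratio squared is
\begin{equation*}
\frac{d^2\,p(n-1)}{c^{2n}}.
\end{equation*}
Thus $\alpha_\lambda(v)=1+\sum_{n\ge1}d^2p(n-1)c^{-2n}$. This series diverges exactly when $c\le 1$: for $c=1$ the terms grow polynomially, and for $c>1$ the geometric decay wins.

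\textbf{Item (iv).} By \Cref{prop:analytic-wsp-in-terms-of-cauchy-dual}(ii), the wandering subspace property of $S_\lambda$ is equivalent to analyticity of the Cauchy dual $S_\lambda'$. I will use that $S_\lambda'$ is again a weighted shift on $\scrT_{qb}$, with positive weights $\lambda'_v=\lambda_v/\norm{S_\lambda e_{\pr(v)}}^2$, and that it is left-invertible. Its spine weight is $c'=c/(c^2+d^2)$ and the first branch weight is $d'=d/(c^2+d^2)$. For $k\ge2$, $\lambda_{(k,m)}'^2=p(k-2)/p(k-1)$, so the branch product telescopes to $1/p(n-1)$. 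Rerunning the computation of (iii) gives
\begin{equation*}
\alpha_{\lambda'}(v)=1+\sum_{n\ge1}\frac{d'^2}{p(n-1)\,c'^{2n}}.
\end{equation*}
This diverges iff $c'<1$; the boundary case $c'=1$ converges because $p$ is quadratic. The condition $c'<1$ is exactly $c^2+d^2>c$.

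\textbf{Item (v).} This is the expected main obstacle, though it is purely algebraic. At branch vertices, $j\mapsto\norm{S_\lambda^je_{(n,m)}}^2$ is a polynomial of degree $2$, so $\beta_3$ vanishes there by \Cref{lem:compute-beta-m}. At the spine, the recursion \eqref{eq:branch-rec} specializes to
\begin{equation*}
s_j=c^2s_{j-1}+d^2p(j-1),\qquad s_0=1,
\end{equation*}
independently of $m$. This gives
\begin{equation*}
\begin{aligned}
s_1&=c^2+d^2,\\
s_2&=c^4+c^2d^2+d^2p(1),\\
s_3&=c^6+c^4d^2+c^2d^2p(1)+d^2p(2).
\end{aligned}
\end{equation*}
I would substitute these into $s_3-3s_2+3s_1-1=0$, insert $p(1)=1+a+b$ and $p(2)=1+2a+4b$, and factor. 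I expect this to reduce to the stated identity after multiplying through by suitable powers of $(1-c^2)$ and grouping; this manipulation needs care.

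\textbf{Strictness in (v).} I would check that $\beta_2(S_\lambda)e_{(n,m)}\neq 0$ for $n\ge1$. Its coefficient is the second difference of $p(n-1+j)/p(n-1)$, namely $2b/p(n-1)\neq0$. So $S_\lambda$ is never a $2$-isometry, and whenever it is a $3$-isometry it is a strict one.
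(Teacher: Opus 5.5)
Your proposal is correct and follows the paper's proof essentially step for step: the diagonal computation of $\norm{S_\lambda e_v}^2$ for (i)--(ii), the Chavan--Trivedi criterion at $(0,0)$ with $A((0,0),n)=\{(n,n)\}$ for (iii), and the same criterion applied to the Cauchy dual for (iv). For (v), which the paper omits, your plan is the intended one, and the algebra closes without clearing any factors: the spine coefficient of $\beta_3(S_\lambda)$ is $(c^2-1)^3+d^2\bigl[(1-c^2)^2-(1-c^2)a+(1+c^2)b\bigr]$, whose vanishing is a direct rearrangement of the stated identity, and your check that $\beta_2(S_\lambda)e_{(n,m)}=\frac{2b}{p(n-1)}e_{(n,m)}\neq 0$ correctly gives strictness.
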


    \begin{proof}
        Observe that 
        \begin{equation*}
            \norm{S_{\lambda} e_{(0,m)}}^{2} = c^2 +d ^2, \quad \norm{S_{\lambda} e_{(n,m)}} ^{2} = \frac{p(n)}{p(n-1)}, \qquad m \in \Z, n \in \Z_{\ge 1}.
        \end{equation*}
        Since $\frac{p(n)}{p(n-1)} \to 1$ as $n \to \infty$, $\sup_{(n,m) \in V} \norm{S_{\lambda} e_{(n,m)}}^2 < \infty$, hence $S_{\lambda}$ is bounded. Moreover, since $S_{\lambda} ^{*} S_{\lambda}$ is a diagonal operator with entries $\norm{S_{\lambda} e_{v}}^{2}, \quad v \in V$, we have that $\inf_{v \in V} \norm{S_{\lambda} e_v}^{2} = \min \{ c^2 + d^2, 1\} > 0$. Thus, $S_{\lambda}$ is left-invertible and norm-increasing if and only if $c^2 + d^2 \ge 1$. This proves {\rm (i)} and {\rm (ii)}.

        We proceed to prove {\rm (iii)}. By \Cref{prop:analytic-criterion}, it is necessary and sufficient to prove $\alpha_{\lambda} ((0,0)) = \infty$ in order to prove that $S_{\lambda}$ is analytic. By definitions, we have that $\pr ^{\langle n\rangle}((0,0))=(0,n)$ and $A((0,0),n)=\{(n,n)\}$ for every $n\in\mathbb Z_{\geq 1}$. Hence, $\lambda_{(0,n)|(n,n)}=d\sqrt{p(n-1)}$ and $\lambda_{(0,n)|(0,0)}=c^n$. Thus,
        \begin{equation*}
            \alpha_\lambda((0,0))=1+d^2\sum_{n=1}^{\infty}\frac{p(n-1)}{c^{2n}}.
        \end{equation*}
        It is easy to see that $\alpha_\lambda((0,0))=\infty$ if and only if $c\leq 1$. This completes the proof of item {\rm (iii)}.

        We proceed to prove item {\rm (iv)}. Using \Cref{prop:analytic-wsp-in-terms-of-cauchy-dual}, $S_{\lambda}$ possesses the wandering subspace property if and only if the Cauchy dual $S_{\lambda}'$ is analytic. Note that $S_{\lambda} '$ is a weighted shift on $\scrT_{qb}$ with positive weights whose weights are given by
        \begin{equation*}
            \lambda_{u} ' = \frac{\lambda_u}{\norm{S_{\lambda} e_{\pr (u)}}^{2}}, \qquad u \in V.
        \end{equation*}
        Therefore, we have that 
            \begin{equation*}
                \lambda'_{(n,m)} =\begin{dcases}
                    \frac{c}{c^2 + d^2}, & n=0, m\in \Z, \\
                    \frac{d}{c^2 + d^2}, & n=1, m \in \Z, \\
                    \sqrt{\frac{p(n-2)}{p(n-1)}}, & n \in \Z_{\ge 2}, m \in \Z.
                \end{dcases}
            \end{equation*}
        and, hence, 
        \begin{equation*}
            \lambda ' _{(0, n) | (0,0)} = \left( \frac{c}{c^2+d^2} \right)^{n}, \quad \lambda'_{(0,n) \mid (n,n)} = \frac{d}{(c^2 +d^2) \sqrt{p (n-1)}}, \qquad n \in \Z_{\ge 1}.
        \end{equation*}
        Observe that 
        \begin{equation*}
            \alpha_{\lambda'} ((0,0)) = 1 + \frac{d^2}{(c^2 + d^2)^2} \sum_{n=1}^{\infty} \left( \frac{(c^2 + d^2)^2}{c^2} \right)^{n} \frac{1}{p(n-1)}.
        \end{equation*}
        From here, it is easy to see that $\alpha_{\lambda'} ((0,0)) = \infty$ if and only if $c^2 + d^2 > c$. Thus, $S_{\lambda}$ possesses the wandering subspace property if and only if $c^2 + d^2 > c$. This proves item {\rm (iv)}.

        The proof of item {\rm (v)} is similar to that of \Cref{thm:classify-4-isometries} and follows from \Cref{lem:compute-beta-m}. We omit the proof.
    \end{proof}

    We remark that every norm-increasing weighted shift on $\scrT_{qb}$ with weights as in \eqref{eq:CS-weights-3-isometry} possesses the wandering subspace property. Indeed, if $S_{\lambda}$ failed to possess the wandering subspace property, then $c^2 + d^2 \le c$. Consequently, $c \in (0,1)$, and hence $c^2 + d^2 < 1$ which would imply that $S_{\lambda}$ is not norm-increasing. Moreover, no analytic weighted shift in this family can be cyclic. Indeed, if $S_{\lambda}$ were analytic and cyclic, then $\dim \ker S_{\lambda} ^{*} \le 1$. But then by \Cref{remark:analytic-finite-codimension-rooted}, $\scrT_{qb}$ must be rooted, which is a contradiction.

    \begin{example}
        In view of \Cref{prop:classify-3-isometries}, the weighted shift $S_{\lambda}$ on $\scrT_{qb}$ with the system of weights as in \eqref{eq:CS-weights-3-isometry} with $a=1, b= 3/2, c=d=1/2$ is an analytic $3$-isometry which does not possess the wandering subspace property, is not cyclic and is not norm-increasing. A quick computation shows that 
        \begin{equation*}
            \lambda_{(n,m)} = \begin{dcases}
                \frac{1}{2}, & n \in \{ 0,1 \}, m \in \Z \\
                \sqrt{\frac{3n^2 - 4n + 3}{3n^2 - 10n + 10}}, & n \in \Z_{\ge 2}, m \in \Z.
            \end{dcases}
        \end{equation*}
        \label{ex:analytic-non-cyclic-no-WSP}
    \end{example}

\section{Some open questions}\label{sec:last}
    
    For each $m\in\Z_{\ge 4}$, we constructed a counterexample to \Cref{qn:Shimorin} on the rootless quasi-Brownian directed tree of valency $2$, which is leafless, has infinite branching index, and has every generation infinite. In view of this and \Cref{prop:necessity-of-tree structure}, it is natural to ask the following question:
    
    \begin{question}
        Let $\scrT=(V,E)$ be a leafless rootless countable directed tree with infinite branching index such that every generation of $\scrT$ is infinite. Does there exist an analytic norm-increasing $4$-isometric weighted shift $S_\lambda\in\calB(\ell^2(V))$ with positive weights which does not possess the wandering subspace property?
    \end{question}
    
    We remark that the analogous question with $4$-isometric replaced by $3$-concave has a negative answer. We have seen by \Cref{thm:3-concave} that every norm-increasing $3$-concave weighted shift on a directed tree admits a Wold-type decomposition, and hence every such analytic weighted shift possesses the wandering subspace property.

    In view of \Cref{thm:counterexample}, Shimorin's \Cref{qn:Shimorin} is sharpened to the following question:
    
    \begin{question}
        Does every norm-increasing $3$-concave operator admit a Wold-type decomposition?
    \end{question}

\section*{Acknowledgments}
I am grateful to my advisor Md. Ramiz Reza for introducing me to the work of S. Shimorin \cite{zbMATH01572594} early in my doctoral studies. I would also like to thank Sameer Chavan and Shailesh Trivedi for their encouragement in the preparation of this article and for several valuable discussions.

\section*{AI Disclosure}
GPT-5.6 Sol by OpenAI was used to assist with computations, draw the TikZ figures, and proofread the manuscript. The author assumes full responsibility for the work.

\bibliographystyle{amsplain}
\bibliography{main}

\end{document}